\documentclass[11pt,english]{article}
\usepackage[T1]{fontenc}
\usepackage[latin9]{inputenc}
\usepackage{geometry}
\geometry{verbose,lmargin=3cm,rmargin=3cm}
\synctex=-1
\usepackage{amsthm}
\usepackage{amsmath}
\usepackage{amssymb}
\usepackage{esint}

\makeatletter
\numberwithin{equation}{section}
\theoremstyle{plain}
\newtheorem{thm}{\protect\theoremname}[section]
  \theoremstyle{definition}
  \newtheorem{defn}[thm]{\protect\definitionname}
  \theoremstyle{remark}
  \newtheorem{rem}[thm]{\protect\remarkname}
  \theoremstyle{plain}
  \newtheorem{prop}[thm]{\protect\propositionname}
  \theoremstyle{plain}
  \newtheorem{lem}[thm]{\protect\lemmaname}
  \theoremstyle{definition}
  \newtheorem{example}[thm]{\protect\examplename}

\makeatother

\usepackage{babel}
  \providecommand{\definitionname}{Definition}
  \providecommand{\examplename}{Example}
  \providecommand{\lemmaname}{Lemma}
  \providecommand{\propositionname}{Proposition}
  \providecommand{\remarkname}{Remark}
\providecommand{\theoremname}{Theorem}

\begin{document}

\title{$G$-Brownian Motion as Rough Paths and Differential Equations Driven
by $G$-Brownian Motion}

\author{X. Geng%
\thanks{Mathematical Institute, University of Oxford, Oxford OX1 3LB, England
and the Oxford-Man Institute, University of Oxford, Oxford OX2 6ED,
England. Email: xi.geng@maths.ox.ac.uk. %
}, Z. Qian%
\thanks{Exeter College, University of Oxford, Oxford OX1 3DP, England. Email:
qianz@maths.ox.ac.uk.%
} and D. Yang%
\thanks{Mathematical Institute, University of Oxford, Oxford OX1 3LB, England
and the Oxford-Man Institute, University of Oxford, Oxford OX2 6ED,
England. Email: danyu.yang@maths.ox.ac.uk. %
}}
\maketitle
\begin{abstract}
The present paper is devoted to the study of sample paths of $G$-Brownian
motion and stochastic differential equations (SDEs) driven by $G$-Brownian
motion from the view of rough path theory. As the starting point,
we show that quasi-surely, sample paths of $G$-Brownian motion can
be enhanced to the second level in a canonical way so that they become
geometric rough paths of roughness $2<p<3.$ This result enables us
to introduce the notion of rough differential equations (RDEs) driven
by $G$-Brownian motion in the pathwise sense under the general framework
of rough paths. Next we establish the fundamental relation between
SDEs and RDEs driven by $G$-Brownian motion. As an application, we
introduce the notion of SDEs on a differentiable manifold driven by
$G$-Brownian motion and construct solutions from the RDE point of
view by using pathwise localization technique. This is the starting
point of introducing $G$-Brownian motion on a Riemannian manifold,
based on the idea of Eells-Elworthy-Malliavin. The last part of this
paper is devoted to such construction for a wide and interesting class
of $G$-functions whose invariant group is the orthogonal group. We
also develop the Euler-Maruyama approximation for SDEs driven by $G$-Brownian
motion of independent interest.
\end{abstract}

\section{Introduction}

The classical Feynman-Kac formula (see \cite{kac1949distributions},
\cite{karatzas1991brownian}) provides us with a way to represent
the solution of a linear parabolic PDE in terms of the conditional
expectation of certain functional of a diffusion process (solution
of an SDE). However, it works only for the linear case, which is mainly
due to the linearity nature of diffusion processes. To understand
nonlinear parabolic PDEs from the probabilistic point of view, Peng
and Pardoux (see \cite{pardoux1990adapted}, \cite{pardoux1992backward},
\cite{pardoux1994backward}) initiated the study of backward stochastic
differential equations (BSDEs) and showed that the solution of a certain
type of quasilinear parabolic PDEs can be expressed in terms of the
solution of BSDE . This result suggests that BSDE reveals a certain
type of nonlinear dynamics, and was made explicit by Peng \cite{peng1997backward}.
More precisely, Peng introduced a notion of nonlinear expectation
called the $g$-expectation in terms of the solution of BSDE which
is filtration consistent. However, it was developed under the framework
of classical It$\hat{\mbox{o}}$ calculus and did not capture the
fully nonlinear situation.

Motivated from the study of fully nonlinear dynamics, Peng \cite{peng2007g}
introduced the notion of $G$-expectation in an intrinsic way which
does not rely on any particular probability space. It reveals the
probability distribution uncertainty in a fundamental way which is
crucial in many situations such as modeling risk uncertainty in mathematical
finance. The underlying mechanism corresponding to such kind of uncertainty
is a fully nonlinear parabolic PDE. In \cite{peng2007g}, \cite{peng2008multi}
he also introduced the concept of $G$-Brownian motion which is generated
by the so-called nonlinear $G$-heat equation and related stochastic
calculus such as $G$-It$\hat{\mbox{o}}$ integral, $G$-It$\hat{\mbox{o}}$
formula, SDEs driven by $G$-Brownian motion, etc. One of the major
significance of such theory is the corresponding nonlinear Feynman-Kac
formula proved by Peng \cite{peng2010nonlinear}, which gives us a
way to represent the solution of a fully nonlinear parabolic PDE via
the solution of a forward-backward SDE under the framework of $G$-expectation. 

On the other hand, motivated from the study of integration against
irregular paths and differential equations driven by rough signals,
Lyons \cite{lyons1998differential} proposed a theory of rough paths
which reveals the fundamental way of understanding the roughness of
a continuous path. He pointed out that to understand the evolution
of a system whose input signal (driven path) is rough, a finite sequence
of ``iterated integrals'' (higher levels) of the driving path which
satisfy a certain type of algebraic relation (Chen identity) should
be specified in advance. Such point of view is fundamental, if we
look at the Taylor expansion for the solution of an ODE whose driving
path is of bounded variation (see (\ref{Euler scheme}) and a more
detailed introduction in the next section). In other words, it is
essential to regard a path as an object valued in some tensor algebra
which records the information of higher levels if we wish to understand
the ``differential'' of the path. Moreover, Lyons \cite{lyons1998differential}
proved the so-called universal limit theorem (see Theorem \ref{universal limit theorem}
in the next section), which allows us to introduce the notion of differential
equations driven by rough paths (simply called RDEs) in a rigorous
way. The theory of rough paths has significant applications in classical
stochastic analysis, as we can prove that the sample paths of many
stochastic processes we've encountered are essentially rough paths
with certain roughness. According to Lyons' universal limit theorem,
we are able to establish RDEs driven by the sample paths of those
stochastic processes in a pathwise manner. It provides us with a new
way to understand SDEs, especially when the driving process is not
the classical Brownian motion in which case a well-developed It$\hat{\mbox{o}}$
SDE theory is still not available.

The case of classical Brownian motion is quite special, since we have
a complete SDE theory in the $L^{2}$-sense, as well as the notion
of Stratonovich type integrals and differential equations. The fundamental
relation between the two types of stochastic differentials (one-dimensional
case) can be expressed by 
\[
X\circ dY=XdY+\frac{1}{2}dX\cdot dY.
\]
It is proved in the rough path theory (see \cite{friz2010multidimensional},
\cite{lyons2002system}, and also \cite{ikeda1989stochastic}, \cite{wong1965relation}
from the view of Wong-Zakai type approximation) that the Stratonovich
type integrals and differential equations are equivalent to the pathwise
integrals and RDEs in the sense of rough paths. In other words, the
following to types of differential equations driven by Brownian motion
\begin{eqnarray*}
dX_{t} & = & \sum_{\alpha=1}^{d}V_{\alpha}(X_{t})dW_{t}^{\alpha}+b(X_{t})dt,\ \ \ \mbox{(It\ensuremath{\hat{\mbox{o}}}\ type SDE)}\\
dY_{t} & = & \sum_{\alpha=1}^{d}V_{\alpha}(Y_{t})dW_{t}^{\alpha}+(b(Y_{t})-\sum_{\alpha=1}^{d}\frac{1}{2}DV_{\alpha}(Y_{t})\cdot V_{\alpha}(Y_{t}))dt,\ \ \ \mbox{(RDE)}
\end{eqnarray*}
which are both well-defined under some regularity assumptions on the
generating vector fields, are equivalent in the sense that if their
solutions $X_{t}$ and $Y_{t}$ satisfy $X_{0}=Y_{0}$, then $X=Y$
almost surely.

Under the framework of $G$-expectation, SDEs driven by $G$-Brownian
motion introduced by Peng , can be regarded as nonlinear diffusion
processes in Euclidean spaces. The idea of constructing $G$-It$\hat{\mbox{o}}$
integrals and SDEs driven by $G$-Brownian motion is similar to the
classical It$\hat{\mbox{o}}$ calculus, which is also an $L^{2}$-theory
but under the $G$-expectation instead of probability measures. What
is missing is the notion of Stratonovich type integrals, mainly due
to the reason that the theory of $G$-martingales is still not well
understood. In particular, we don't have the corresponding nonlinear
Doob-Meyer type decomposition theorem and the notion of quadratic
variation processes for $G$-martingales. However, by the key observation
in the classical case that the Stratonovich type integrals and the
pathwise integrals are essentially equivalent in the sense of rough
paths, we can study the sample paths of $G$-Brownian motion and SDEs
driven by $G$-Brownian motion from the view of rough path theory,
once we prove that the sample paths of $G$-Brownian motion can be
regarded as objects in some rough path space with certain roughness.
This is in fact what the present paper is mainly focused on. The basic
language to describe path structure under the $G$-expectation is
quasi-sure analysis and capacity theory, which was developed by Denis,
Hu and Peng \cite{denis2011function}. They generalized the Kolmogorov
continuity theorem and studied sample path properties of $G$-Brownian
motion. In particular, they also studied the relation between $G$-expectation
and upper expectation associated to a family of probability measures
which defines a Choquet capacity and the relation between the corresponding
two types of $L^{p}$-spaces. The pathwise properties and homeomorphic
flows for SDEs driven by $G$-Brownian motion in the quasi-sure setting
was studied by Gao \cite{gao2009pathwise}.

There are two main goals of the present paper. This first one is to
study the rough path nature of sample paths of $G$-Brownian motion
so that we can define RDEs driven by $G$-Brownian motion (the Stratonovich
counterpart in the classical case) in the pathwise sense, and establish
the fundamental relation between two types of differential equations
driven by $G$-Brownian motion. The second one is to understand nonlinear
diffusion processes in a (Riemannian) geometric setting, from the
view of paths and distributions (the generating nonlinear PDE).

The present paper is organized in the following way. Section 2 is
a basic review of the theory of $G$-expectation and rough paths,
which provides us with the general framework and basic tools for our
study. In Section 3 we study the Euler-Maruyama approximation scheme
for SDEs driven by $G$-Brownian motion. In Section 4 we show that
for quasi-surely, the sample paths of $G$-Brownian motion can be
enhanced to the second level in a canonical way so that they become
geometric rough paths of roughness $2<p<3$ by using techniques in
rough path theory. In Section 5 we establish the fundamental relation
between SDEs and RDEs driven by $G$-Brownian motion by using rough
Taylor expansions. In section 6 we introduce the notion of SDEs on
a differentiable manifold driven by $G$-Brownian motion from the
RDE point of view. In the last section, we study the infinitesimal
diffusive nature and the generating PDEs of nonlinear diffusion processes
in a (Riemannian) geometric setting, which leads to the construction
of $G$-Brownian motion on a Riemannian manifold. We restrict ourselves
to compact manifolds only, although the general case can be treated
in a similar way with more technical complexity.

Throughout the rest of this paper, we will use standard geometric
notation for differential equations. Moreover, we will use the Einstein
convention of summation, that is, when an index $\alpha$ appears
as both subscript and superscript in the same expression, summation
over $\alpha$ is taken automatically.

\section{Preliminaries on $G$-expectation and Rough Path Theory}

\subsection{$G$-expectation and Related Stochastic Calculus}

We first introduce some fundamentals on $G$-expectation and related
stochastic calculus. For a systematic introduction, see \cite{peng2007g},
\cite{peng2008multi}, \cite{peng2010nonlinear}.

Let $\Omega$ be a nonempty set, and $\mathcal{H}$ be a vector space
of functionals on $\Omega$ such that $\mathcal{H}$ contains all
constant functionals and for any $ $$X_{1},\cdots,X_{n}\in\mathcal{H}$
and any $\varphi\in C_{l,Lip}(\mathbb{R}^{n})$, 
\[
\varphi(X_{1},\cdots,X_{n})\in\mathcal{H},
\]
where $C_{l,Lip}(\mathbb{R}^{n})$ denotes the space of functions
$\varphi$ on $\mathbb{R}^{n}$ satisfying
\[
|\varphi(x)-\varphi(y)|\leqslant C(1+|x|^{m}+|y|^{m})(|x-y|),\ \forall x,y\in\mathbb{R}^{n},
\]
for some constant $C>0$ and $m\in\mathbb{N}$ depending on $\varphi$.
$\mathcal{H}$ can be regarded as the space of random variables.
\begin{defn}
A sublinear expectation $\mathbb{E}$ on $(\Omega,\mathcal{H})$ is
a functional $\mathbb{E}:\ \mathcal{H}\rightarrow\mathbb{R}$ such
that 

(1) if $X\leqslant Y,$ then $\mathbb{E}[X]\leqslant\mathbb{E}[Y]$; 

(2) for any constant $c$, $\mathbb{E}[c]=c$;

(3) for any $X,Y\in\mathcal{H},$ $\mathbb{E}[X+Y]\leqslant\mathbb{E}[X]+\mathbb{E}[Y];$

(4) for any $\lambda\geqslant0$ and $X\in\mathcal{H},$ $\mathbb{E}[\lambda X]=\lambda\mathbb{E}[X].$
\end{defn}
The triple $(\Omega,\mathcal{H},\mathbb{E})$ is called a sublinear
expectation space.

The relation between sublinear expectations and linear expectations,
which was proved by Peng \cite{peng2010nonlinear}, is contained in
the following representation theorem.
\begin{thm}
\label{representation of E}Let $(\Omega,\mathcal{H},\mathbb{E})$
be a sublinear expectation space. Then there exists a family of linear
expectations (linear functionals) $\{\mathbb{E}_{\theta}:\theta\in\Theta\}$
on $\mathcal{H},$ such that 
\[
\mathbb{E}[X]=\sup_{\theta\in\Theta}\mathbb{E}_{\theta}[X],\ \forall X\in\mathcal{H}.
\]

\end{thm}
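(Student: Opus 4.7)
The plan is to realize $\mathbb{E}$ as the upper envelope of its linear minorants, using the Hahn--Banach extension theorem. The sublinearity conditions (3) and (4) say precisely that $\mathbb{E}:\mathcal{H}\to\mathbb{R}$ is a sublinear (positively homogeneous and subadditive) functional on the real vector space $\mathcal{H}$, which is exactly the hypothesis needed to apply Hahn--Banach in its analytic form.

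First I would fix an arbitrary $X_{0}\in\mathcal{H}$ and define a linear functional $L_{0}$ on the one-dimensional subspace $\mathbb{R}X_{0}$ by $L_{0}(\lambda X_{0})=\lambda\mathbb{E}[X_{0}]$. One has to check $L_{0}\leqslant\mathbb{E}$ on this subspace. For $\lambda\geqslant0$ this is (4); for $\lambda<0$ observe that $0=\mathbb{E}[0]\leqslant\mathbb{E}[X_{0}]+\mathbb{E}[-X_{0}]$ by (3), hence $\mathbb{E}[-X_{0}]\geqslant-\mathbb{E}[X_{0}]$, and therefore $\mathbb{E}[\lambda X_{0}]=(-\lambda)\mathbb{E}[-X_{0}]\geqslant\lambda\mathbb{E}[X_{0}]=L_{0}(\lambda X_{0})$. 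Hahn--Banach then provides a linear extension $L:\mathcal{H}\to\mathbb{R}$ with $L\leqslant\mathbb{E}$ on all of $\mathcal{H}$ and $L(X_{0})=\mathbb{E}[X_{0}]$.

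Next I would verify that every such dominated linear $L$ is automatically a linear expectation in the sense of the paper. Constant preservation: applying $L\leqslant\mathbb{E}$ to both $c$ and $-c$ and using (2) gives $L(c)\leqslant c$ and $-L(c)=L(-c)\leqslant-c$, hence $L(c)=c$. Monotonicity: if $X\leqslant Y$ then $X-Y\leqslant0$, so by (1) and (2), $\mathbb{E}[X-Y]\leqslant0$, whence $L(X)-L(Y)=L(X-Y)\leqslant\mathbb{E}[X-Y]\leqslant0$.

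Finally I would let $\Theta$ denote the set of all linear functionals $\mathbb{E}_{\theta}$ on $\mathcal{H}$ satisfying $\mathbb{E}_{\theta}\leqslant\mathbb{E}$ (equivalently, all linear expectations dominated by $\mathbb{E}$, by the previous paragraph). The inequality $\sup_{\theta\in\Theta}\mathbb{E}_{\theta}[X]\leqslant\mathbb{E}[X]$ is immediate from the definition of $\Theta$, while the reverse inequality at any fixed $X=X_{0}$ is attained by the $L$ constructed above, yielding the desired representation. The only real subtlety is the Hahn--Banach verification $L_{0}\leqslant\mathbb{E}$ on $\mathbb{R}X_{0}$ and the deduction that domination by a sublinear $\mathbb{E}$ forces a linear functional to be monotone and constant-preserving; everything else is bookkeeping.
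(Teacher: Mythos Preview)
Your argument is correct and is the standard Hahn--Banach proof of this representation theorem. The paper itself does not supply a proof of this statement; it merely cites Peng \cite{peng2010nonlinear}, whose argument is essentially the one you have written.
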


Under the frame work of sublinear expectation space, we also have
the notion of independence and distribution (law).
\begin{defn}
(1) A random vector $Y\in\mathcal{H}^{n}$ is said to be independent
from another random vector $X\in\mathcal{H}^{m}$ under the sublinear
expectation $\mathbb{E},$ if for any $\varphi\in C_{l,Lip}(\mathbb{R}^{m}\times\mathbb{R}^{n})$,
\[
\mathbb{E}[\varphi(X,Y)]=\mathbb{E}[\mathbb{E}[\varphi(x,Y)]_{x=X}].
\]

(2) Given a random vector $X\in\mathcal{H}^{n}$, the distribution
(or the law) of $X$ is defined as the sublinear expectation 
\[
\mathbb{F}_{X}[\varphi]:=\mathbb{E}[\varphi(X)],\ \varphi\in C_{l,Lip}(\mathbb{R}^{n}),
\]
on $(\mathbb{R}^{n},C_{l,Lip}(\mathbb{R}^{n}))$. By saying that two
random vectors $X,Y$ (possibly defined on different sublinear expectation
spaces) are identically distributed, we mean that their distributions
are the same. 
\end{defn}

Now we introduce the notion of $G$-distribution, which is the generalization
of degenerate distributions and normal distributions. It captures
the uncertainty of probability distributions and plays a fundamental
role in the theory of sublinear expectation. 

Let $S(d)$ be the space of $d\times d$ symmetric matrices, and let
$G:\ \mathbb{R}^{d}\times S(d)\rightarrow\mathbb{R}$ be a continuous
and sublinear function monotonic in $S(d)$ in the sense that:

(1) $G(p+\bar{p},A+\bar{A})\leqslant G(p,A)+G(\bar{p},\bar{A}),\ \forall p,\bar{p}\in\mathbb{R}^{d},\ A,\bar{A}\in S(d);$

(2) $G(\lambda p,\lambda A)=\lambda G(p,A),\ \forall\lambda\geqslant0;$

(3) $G(p,A)\leqslant G(p,\bar{A}),\ \forall A\leqslant\bar{A}$.
\begin{defn}
Let $X,\eta\in\mathcal{H}^{d}$ be two random vectors. $(X,\eta)$
is called $G$-distributed if for any $\varphi\in C_{l,Lip}(\mathbb{R}^{d}\times\mathbb{R}^{d}),$
the function 
\[
u(t,x,y):=\mathbb{E}[\varphi(x+\sqrt{t}X,y+t\eta)],\ (t,x,y)\in[0,\infty)\times\mathbb{R}^{d}\times\mathbb{R}^{d},
\]
is a viscosity solution of the following parabolic PDE (called a $G$-heat
equation):
\begin{equation}
\partial_{t}u-G(D_{y}u,D_{x}^{2}u)=0,\label{G-Heat eqn}
\end{equation}
with Cauchy condition $u|_{t=0}=\varphi$.
\end{defn}

\begin{rem}
From the general theory of viscosity solutions (see \cite{crandall1992user},
\cite{peng2010nonlinear}), the $G$-heat equation (\ref{G-Heat eqn})
has a unique viscosity solution. By solving the $G$-heat equation
(\ref{G-Heat eqn}) (in some special cases, it is explicitly solvable),
we can compute the sublinear expectation of some functionals of a
$G$-distributed random vector. The case of convex functionals, for
instance, the power function $|x|^{k}$, is quite interesting.
\end{rem}

It can be proved that for such a function $G,$ there exists a bounded,
closed and convex subset $\Gamma\subset\mathbb{R}^{d}\times\mathbb{R}^{d\times d},$
such that $G$ has the following representation:
\[
G(p,A)=\sup_{(q,Q)\in\Gamma}\{\frac{1}{2}\mbox{tr\ensuremath{(AQQ^{T})}}+\langle p,q\rangle\},\ \forall(p,A)\in\mathbb{R}^{d}\times S(d).
\]
The set $\Gamma$ captures the uncertainty of probability distribution
(mean uncertainty and variance uncertainty) of a $G$-distributed
random vector. 

In particular, if $G$ only depends on $p\in\mathbb{R}^{d},$ then
there exists some bounded, closed and convex subset $\Lambda\subset\mathbb{R}^{d},$
such that 
\[
G(p)=\sup_{q\in\Lambda}\langle p,q\rangle.
\]
In this case a $G$-distributed random vector $\eta$ is called maximal
distributed and is denoted by $\eta\sim N(\Lambda,\{0\}).$ Similarly,
if $G$ only depends on $A\in S(d),$ then there exists some bounded,
closed and convex subset $\Sigma\subset S_{+}(d)$ (the space of symmetric
and nonnegative definite matrices) such that 
\begin{equation}
G(A)=\frac{1}{2}\sup_{B\in\Sigma}\mbox{tr\ensuremath{(AB)}},\ \forall A\in S(d).\label{G representation}
\end{equation}
A $G$-distributed random vector $X$ for such $G$ is called $G$-normal
distributed and is denoted by $X\sim N(\{0\},\Sigma).$

Now we introduce the concept of $G$-Brownian motion and related stochastic
calculus.

From now on, let $G:\ S(d)\rightarrow\mathbb{R}$ be a function given
by (\ref{G representation}).
\begin{defn}
\label{G-Brownian motion}A $d$-dimensional process $B_{t}$ is called
a $G$-Brownian motion if 

(1) $B_{0}(\omega)=0,\ \forall\omega\in\Omega;$

(2) for each $s,t\geqslant0$, $B_{t+s}-B_{t}\sim N(\{0\},s\Sigma)$
and is independent from $(B_{t_{1}},\cdots,B_{t_{n}})$ for any $n\geqslant1$
and $0\leqslant t_{1}<\cdots<t_{n}\leqslant t.$
\end{defn}

Similar to the classical situation, a $G$-Brownian motion can be
constructed explicitly on the canonical path space by using independent
$G$-normal random vectors. We refer the readers to \cite{peng2010nonlinear}
for a detailed construction. 

In summary, let $\Omega=C_{0}([0,\infty);\mathbb{R}^{d})$ be the
space of $ $$\mathbb{R}^{d}$-valued continuous paths starting at
the origin, and let $B_{t}(\omega):=\omega_{t}$ be the coordinate
process. For any $T\geqslant0,$ define
\[
L_{ip}(\Omega_{T}):=\{\varphi(B_{t_{1}},\cdots,B_{t_{n}}):\ n\geqslant1,t_{1},\cdots,t_{n}\in[0,T],\varphi\in C_{l,Lip}(\mathbb{R}^{d\times n})\},
\]
and 
\[
L_{ip}(\Omega):=\bigcup_{n=1}^{\infty}L_{ip}(\Omega_{n}).
\]
Then on $(\Omega,L_{ip}(\Omega))$ we can define the canonical sublinear
expectation $\mathbb{E}$ such that the coordinate process $B_{t}$
becomes a $G$-Brownian motion, which is usually called the $G$-expectation
and denoted by $\mathbb{E}^{G}.$ $(\Omega,L_{ip}(\Omega),\mathbb{E}^{G})$
is also called the canonical $G$-expectation space. Throughout the
rest of this paper, we will restrict ourselves on the canonical $G$-expectation
space and its completion (to be defined later on).

On $(\Omega,L_{ip}(\Omega),\mathbb{E}^{G})$ we can introduce the
notion of conditional $G$-expectation. More precisely, for 
\[
X=\varphi(B_{t_{1}},B_{t_{2}}-B_{t_{1}},\cdots,B_{t_{n}}-B_{t_{n-1}})\in L_{ip}(\Omega),
\]
where $0\leqslant t_{1}<t_{2}<\cdots<t_{n},$ the $G$-conditional
expectation of $X$ under $\Omega_{t_{j}}$ is defined by 
\[
\mathbb{E}^{G}[X|\Omega_{t_{j}}]:=\psi(B_{t_{1}},B_{t_{2}}-B_{t_{1}},\cdots,B_{t_{j}}-B_{t_{j-1}}),
\]
where 
\[
\psi(x_{1},\cdots,x_{j}):=\mathbb{E}^{G}[\varphi(x_{1},\cdots,x_{j},B_{t_{j+1}}-B_{t_{j}},\cdots,B_{t_{n}}-B_{t_{n-1}})],\ x_{1},\cdots,x_{j}\in\mathbb{R}^{d}.
\]
The conditional $G$-expectation $\mathbb{E}^{G}[\cdot|\Omega_{t}]$
has the following properties: for any $X,Y\in L_{ip}(\Omega),$

(1) if $X\leqslant Y,$ then $\mathbb{E}^{G}[X|\Omega_{t}]\leqslant\mathbb{E}^{G}[Y|\Omega_{t}];$

(2) $\mathbb{E}^{G}[X+Y|\Omega_{t}]\leqslant\mathbb{E}^{G}[X|\Omega_{t}]+\mathbb{E}^{G}[Y|\Omega_{t}];$

(3) for any $\eta\in L_{ip}(\Omega_{t})$, 
\begin{eqnarray*}
\mathbb{E}^{G}[\eta|\Omega_{t}] & = & \eta,\\
\mathbb{E}^{G}[\eta X|\Omega_{t}] & = & \eta^{+}\mathbb{E}^{G}[X|\Omega_{t}]+\eta^{-}\mathbb{E}[-X|\Omega_{t}];
\end{eqnarray*}

(4) $\mathbb{E}^{G}[\mathbb{E}^{G}[X|\Omega_{t}]|\Omega_{s}]=\mathbb{E}^{G}[X|\Omega_{t\wedge s}].$
In particular, $\mathbb{E}^{G}[\mathbb{E}^{G}[X|\Omega_{t}]]=\mathbb{E}^{G}[X].$

For any $p\geqslant1,$ let $L_{G}^{p}$ (respectively, $L_{G}^{p}(\Omega_{t}))$)
be the completion of $L_{ip}(\Omega)$ (respectively, $L_{ip}(\Omega_{t})$)
under the semi-norm $\|X\|_{p}:=(\mathbb{E}^{G}[|X|^{p}])^{\frac{1}{p}}.$
Then $ $$\mathbb{E}^{G}$ can be continuously extended to a sublinear
expectation on $L_{G}^{p}(\Omega)$ (respectively, $L_{G}^{p}(\Omega_{t})$),
still denoted by $\mathbb{E}^{G}.$

For $t<T\leqslant\infty,$ the conditional $ $$G$-expectation $\mathbb{E}^{G}[\cdot|\Omega_{t}]:\ L_{ip}(\Omega_{T})\rightarrow L_{ip}(\Omega_{t})$
is a continuous mapping under $\|\cdot\|_{1}$ and can be continuously
extended to a mapping
\[
\mathbb{E}^{G}[\cdot|\Omega_{t}]:\ L_{G}^{1}(\Omega_{T})\rightarrow L_{G}^{1}(\Omega_{t}),
\]
which can still be interpreted as the conditional $G$-expectation.
It is easy to show that the properties (1) to (4) for the conditional
$G$-expectation still hold true on $L_{G}^{1}(\Omega_{T})$ as long
as it is well-defined.

Now we introduce the related stochastic calculus for $G$-Brownian
motion and (It$\hat{\mbox{o}}$ type) stochastic differential equations
(SDEs) driven by $G$-Brownian motion.

First of all, similar to the idea in the classical case, we still
have the notion of It$\hat{\mbox{o}}$ integral with respect to a
$1$-dimensional $G$-Brownian motion. More precisely, consider $d=1,$
we can first define It$\hat{\mbox{o}}$ integral of simple processes
and then pass limit under the $G$-expectation $\mathbb{E}^{G}$ in
some suitable functional spaces. Let $M_{G}^{p,0}(0,T)$ be the space
of simple processes $\eta_{t}(\omega)$ on $[0,T]$ of the form
\[
\eta_{t}(\omega)=\sum_{k=1}^{N}\xi_{k-1}(\omega)\mathbf{1}_{[t_{k-1},t_{k})}(t),
\]
where $\pi_{T}^{N}:=\{t_{0},t_{1},\cdots,t_{N}\}$ is a partition
of $[0,T]$ and $\xi_{k}\in L_{G}^{p}(\Omega_{t_{k}})$, and introduce
the semi-norm 
\[
\|\eta\|_{M_{G}^{p}(0,T)}:=(\mathbb{E}^{G}[\int_{0}^{T}|\eta_{t}|^{p}dt])^{\frac{1}{p}}
\]
on $M_{G}^{p,0}(0,T).$ Let $M_{G}^{p}(0,T)$ be the completion of
$M_{G}^{p,0}(0,T)$ under $\|\cdot\|_{M_{G}^{p}(0,T)}.$ It is straight
forward to define It$\hat{\mbox{o}}$ integral $\int_{0}^{T}\eta_{t}dB_{t}$
of simple processes. Moreover, such an integral operator is linear
and continuous under $\|\cdot\|_{M_{G}^{p}(0,T)}$ and hence can be
extended to a bounded linear operator 
\[
I:\ M_{G}^{2}(0,T)\rightarrow L_{G}^{2}(0,T).
\]
The operator $I$ is defined as the It$\hat{\mbox{o}}$ integral operator
with respect to a $G$-Brownian motion. For $0\leqslant s<t\leqslant T,$
define
\[
\int_{s}^{t}\eta_{u}dB_{u}:=\int_{0}^{T}\mathbf{1}_{[s,t]}(u)\eta_{u}dB_{u}.
\]
We list some important properties of $G$-It$\hat{\mbox{o}}$ integral
in the following.
\begin{prop}
Let $\eta,\theta\in M_{G}^{2}(0,T)$ and let $0\leqslant s\leqslant r\leqslant t\leqslant T.$
Then 

(1) 
\[
\int_{s}^{t}\eta_{u}dB_{u}=\int_{s}^{r}\eta_{u}dB_{u}+\int_{r}^{t}\eta_{u}dB_{u};
\]

(2) if $\alpha$ is bounded in $L_{G}^{1}(\Omega_{s}),$ then 
\[
\int_{s}^{t}(\alpha\eta_{u}+\theta_{u})dB_{u}=\alpha\int_{s}^{t}\eta_{u}dB_{u}+\int_{s}^{t}\theta_{u}dB_{u};
\]

(3) for any $X\in L_{G}^{1}(\Omega),$
\[
\mathbb{E}^{G}[X+\int_{r}^{T}\eta_{u}dB_{u}|\Omega_{s}]=\mathbb{E}^{G}[X|\Omega_{s}];
\]

(4) 
\[
\underline{\sigma}^{2}\mathbb{E}^{G}[\int_{0}^{T}\eta_{t}^{2}dt]\leqslant\mathbb{E}^{G}[(\int_{0}^{T}\eta_{t}dB_{t})^{2}]\leqslant\overline{\sigma}^{2}\mathbb{E}^{G}[\int_{0}^{T}\eta_{t}^{2}dt],
\]
 where $\overline{\sigma}^{2}:=\mathbb{E}^{G}[B_{1}^{2}]$ and $\underline{\sigma}^{2}:=-\mathbb{E}^{G}[-B_{1}^{2}].$
\end{prop}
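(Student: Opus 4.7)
The plan is to verify each of the four statements first for simple processes $\eta,\theta\in M_G^{2,0}(0,T)$ and then extend to $M_G^2(0,T)$ by density together with the continuity of the $G$-It\^{o} integral operator $I:M_G^2(0,T)\to L_G^2(\Omega_T)$ already established in the construction. After passing to a common refinement of the partitions that includes $r,s,t$ as nodes, each integral becomes a finite sum $\sum_k \xi_{k-1}(B_{t_k}-B_{t_{k-1}})$ with $\xi_{k-1}\in L_G^2(\Omega_{t_{k-1}})$.

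Items (1) and (2) are then essentially bookkeeping: (1) amounts to splitting the sum at the node $r$, while (2) uses boundedness of $\alpha\in L_G^1(\Omega_s)$ to guarantee $\alpha\xi_{k-1}\in L_G^2(\Omega_{t_{k-1}})$ whenever $t_{k-1}\geq s$, so that $\alpha\eta+\theta$ restricted to $[s,t]$ lies in $M_G^{2,0}(s,t)$ and the claimed identity holds term by term.

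For (3), set $V_k:=B_{t_k}-B_{t_{k-1}}$. Since $V_k\sim N(\{0\},(t_k-t_{k-1})\Sigma)$ is independent of $\Omega_{t_{k-1}}$, property (3) of the conditional $G$-expectation gives
\[
\mathbb{E}^G[\xi_{k-1}V_k\,|\,\Omega_{t_{k-1}}]=\xi_{k-1}^+\mathbb{E}^G[V_k]+\xi_{k-1}^-\mathbb{E}^G[-V_k]=0,
\]
and symmetrically $\mathbb{E}^G[-\xi_{k-1}V_k\,|\,\Omega_{t_{k-1}}]=0$. I will then invoke the standard fact that any $X$ with $\mathbb{E}^G[X|\mathcal{G}]=-\mathbb{E}^G[-X|\mathcal{G}]=0$ splits additively inside the conditional $G$-expectation, i.e.\ $\mathbb{E}^G[X+Y|\mathcal{G}]=\mathbb{E}^G[Y|\mathcal{G}]$. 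Iterating this observation from $k=N$ down to the first index with $t_{k-1}\geq r$, and then applying $\mathbb{E}^G[\cdot|\Omega_s]$, establishes (3).

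The main obstacle is the two-sided estimate (4). Writing $Y_m=\sum_{k=1}^m\xi_{k-1}V_k$, the same additive-splitting trick applied to $Y_m^2=Y_{m-1}^2+2Y_{m-1}\xi_{m-1}V_m+\xi_{m-1}^2V_m^2$ yields the exact one-step identity
\[
\mathbb{E}^G[Y_m^2\,|\,\Omega_{t_{m-1}}]=Y_{m-1}^2+\xi_{m-1}^2\bar{\sigma}^2(t_m-t_{m-1}),
\]
with the analogue for $-Y_m^2$ producing $\underline{\sigma}^2$. The delicate point is that iterating this identity backward through $\mathbb{E}^G$ by mere subadditivity only yields the larger bound $\bar{\sigma}^2\sum_k(t_k-t_{k-1})\mathbb{E}^G[\xi_{k-1}^2]$, which in general strictly exceeds the desired $\bar{\sigma}^2\mathbb{E}^G[\int_0^T\eta_t^2dt]$, and the lower bound suffers from an analogous slack. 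To close the gap I would invoke Theorem \ref{representation of E}: writing $\mathbb{E}^G=\sup_{\theta\in\Theta}\mathbb{E}_\theta$, under each linear $\mathbb{E}_\theta$ the coordinate process $B$ is a square-integrable martingale whose quadratic variation satisfies $\underline{\sigma}^2\,dt\leq d\langle B\rangle_t^\theta\leq\bar{\sigma}^2\,dt$ by the $G$-normal structure of the increments, so the classical It\^{o} isometry gives $\underline{\sigma}^2\mathbb{E}_\theta[\int_0^T\eta_t^2dt]\leq\mathbb{E}_\theta[Y^2]\leq\bar{\sigma}^2\mathbb{E}_\theta[\int_0^T\eta_t^2dt]$. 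Taking the supremum over $\theta$ on all three sides recovers (4), and density together with continuity of $I$ then extends all four assertions to arbitrary $\eta,\theta\in M_G^2(0,T)$.
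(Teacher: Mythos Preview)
The paper does not prove this proposition; it is stated without proof in the preliminaries section as a known result from Peng's theory (see the references \cite{peng2007g}, \cite{peng2008multi}, \cite{peng2010nonlinear} cited at the start of Section~2.1). So there is no ``paper's own proof'' to compare against, and your task is really to give a self-contained argument.

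Your treatment of (1)--(3) is the standard one and is fine. The gap is in your argument for (4). You correctly identify that naively iterating the one-step identity through subadditivity of $\mathbb{E}^G$ overshoots the target, but your proposed fix via Theorem~\ref{representation of E} does not work as stated. That theorem only produces an abstract family of \emph{linear functionals} $\mathbb{E}_\theta$ on $\mathcal{H}$; it does not assert that these come from probability measures, let alone that $B$ is a square-integrable martingale under each with quadratic-variation density in $[\underline{\sigma}^2,\overline{\sigma}^2]$. The latter is precisely the content of the Denis--Hu--Peng characterisation quoted later in Section~2.1, which in Peng's logical development \emph{uses} the $G$-It\^o calculus (including this proposition) as input. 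Invoking it here would be circular.

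There is an elementary repair that stays entirely within the sublinear calculus. Instead of iterating an identity, iterate an inequality for the \emph{difference}. With $Y=\sum_k\xi_{k-1}V_k$ and $V_k=B_{t_k}-B_{t_{k-1}}$, set
\[
Z:=Y^2-\overline{\sigma}^2\sum_k\xi_{k-1}^2(t_k-t_{k-1}).
\]
Conditioning on $\Omega_{t_{N-1}}$, the cross term $2Y_{N-1}\xi_{N-1}V_N$ drops (zero both ways, as you argued), while $\xi_{N-1}^2\geqslant0$ gives
\[
\mathbb{E}^G\bigl[\xi_{N-1}^2\bigl(V_N^2-\overline{\sigma}^2(t_N-t_{N-1})\bigr)\,\big|\,\Omega_{t_{N-1}}\bigr]=\xi_{N-1}^2\bigl(\mathbb{E}^G[V_N^2]-\overline{\sigma}^2(t_N-t_{N-1})\bigr)=0.
\]
Hence by subadditivity $\mathbb{E}^G[Z\,|\,\Omega_{t_{N-1}}]\leqslant Z'$, where $Z'$ is the analogous expression with the last index removed. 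Taking $\mathbb{E}^G$ and iterating yields $\mathbb{E}^G[Z]\leqslant0$, i.e.\ $\mathbb{E}^G[Y^2-\overline{\sigma}^2\int_0^T\eta_t^2\,dt]\leqslant0$, and then subadditivity again gives $\mathbb{E}^G[Y^2]\leqslant\overline{\sigma}^2\,\mathbb{E}^G[\int_0^T\eta_t^2\,dt]$. The lower bound is obtained symmetrically by showing $\mathbb{E}^G[\underline{\sigma}^2\int_0^T\eta_t^2\,dt-Y^2]\leqslant0$, using $\mathbb{E}^G[-V_N^2]=-\underline{\sigma}^2(t_N-t_{N-1})$. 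Density then extends to $M_G^2(0,T)$ as you indicate.
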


Secondly, we have the notion of quadratic variation process of $G$-Brownian
motion. In the case of $1$-dimensional $G$-Brownian motion, the
quadratic variation process $\langle B\rangle_{t}$ is defined as
\[
\langle B\rangle_{t}:=B_{t}^{2}-2\int_{0}^{t}B_{s}dB_{s},
\]
which can be regarded as the $L_{G}^{2}$-limit of the sum $\sum_{j=1}^{k_{N}}(B_{t_{j}^{N}}-B_{t_{j-1}^{N}})^{2}$
as $\mu(\pi_{t}^{N})\rightarrow0$, where $\pi_{t}^{N}:=\{t_{j}^{N}\}_{j=0}^{k_{N}}$
is a sequence of partitions of $[0,t]$ and 
\[
\mu(\pi_{t}^{N}):=\max\{t_{j}^{N}-t_{j-1}^{N}:\ j=1,2,\cdots,k_{N}\}.
\]
It follows that $\langle B\rangle_{t}$ is an increasing process with
$\langle B\rangle_{0}=0$. 

Similar to the definition of $G$-It$\hat{\mbox{o}}$ integral, we
can define the integration with respect to $\langle B\rangle_{t}$
where $B_{t}$ is a $1$-dimensional $G$-Brownian motion. We refer
the readers to \cite{peng2010nonlinear} for a detailed construction
but we remark that the integral operator with respect to $ $$\langle B\rangle_{t}$
is a continuous linear mapping 
\[
Q_{0,T}:\ M_{G}^{1}(0,T)\rightarrow L_{G}^{1}(\Omega_{T}).
\]

The following identity can be regarded as the $G$-It$\hat{\mbox{o}}$
isometry.
\begin{prop}
Let $\eta\in M_{G}^{2}(0,T),$ then 
\[
\mathbb{E}^{G}[(\int_{0}^{T}\eta_{t}dB_{t})^{2}]=\mathbb{E}^{G}[\int_{0}^{T}\eta_{t}^{2}d\langle B\rangle_{t}].
\]

\end{prop}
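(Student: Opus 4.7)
My plan is to prove the identity first for simple processes $\eta\in M_{G}^{2,0}(0,T)$ and then extend to $M_{G}^{2}(0,T)$ by density, exploiting continuity of both sides in the $M_{G}^{2}(0,T)$-norm.

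For $\eta=\sum_{k=1}^{N}\xi_{k-1}\mathbf{1}_{[t_{k-1},t_{k})}$, set $M_{t}:=\int_{0}^{t}\eta_{s}dB_{s}$. Two algebraic ingredients drive the argument. First, rearranging the defining relation $\langle B\rangle_{t}=B_{t}^{2}-2\int_{0}^{t}B_{s}dB_{s}$ between $t_{k-1}$ and $t_{k}$ and cancelling the $B_{t_{k-1}}\Delta_{k}B$ terms yields
\[
(B_{t_{k}}-B_{t_{k-1}})^{2}=(\langle B\rangle_{t_{k}}-\langle B\rangle_{t_{k-1}})+2\int_{t_{k-1}}^{t_{k}}(B_{s}-B_{t_{k-1}})dB_{s}.
\]
Second, telescoping $M_{t_{k}}^{2}-M_{t_{k-1}}^{2}=2M_{t_{k-1}}\xi_{k-1}\Delta_{k}B+\xi_{k-1}^{2}(\Delta_{k}B)^{2}$ gives
\[
M_{T}^{2}=\sum_{k}\xi_{k-1}^{2}(\Delta_{k}B)^{2}+2\sum_{k}M_{t_{k-1}}\xi_{k-1}\Delta_{k}B.
\]
Substituting the first identity into the second and collecting the two resulting Itô-integral pieces into one simple-process $G$-Itô integral $\int_{0}^{T}\psi_{s}dB_{s}$ over $[0,T]$, with $\psi_{s}:=2\xi_{k-1}^{2}(B_{s}-B_{t_{k-1}})+2M_{t_{k-1}}\xi_{k-1}$ on $[t_{k-1},t_{k})$, yields $M_{T}^{2}=\int_{0}^{T}\eta_{s}^{2}d\langle B\rangle_{s}+\int_{0}^{T}\psi_{s}dB_{s}$. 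Property (3) of the $G$-Itô integral (with $s=r=0$ and $X:=\int_{0}^{T}\eta_{s}^{2}d\langle B\rangle_{s}\in L_{G}^{1}(\Omega_{T})$) removes the last term under $\mathbb{E}^{G}$, giving the isometry for simple $\eta$.

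For the density step, continuity of the left-hand side in $\eta$ with respect to $\|\cdot\|_{M_{G}^{2}(0,T)}$ is immediate from property (4). Continuity of the right-hand side follows from sublinearity, $|\mathbb{E}^{G}[\int_{0}^{T}\eta_{s}^{2}d\langle B\rangle_{s}]-\mathbb{E}^{G}[\int_{0}^{T}\tilde{\eta}_{s}^{2}d\langle B\rangle_{s}]|\leq\mathbb{E}^{G}[|\int_{0}^{T}(\eta_{s}^{2}-\tilde{\eta}_{s}^{2})d\langle B\rangle_{s}|]$, combined with the continuity of $Q_{0,T}:M_{G}^{1}(0,T)\to L_{G}^{1}(\Omega_{T})$ applied to $\eta^{2}-\tilde{\eta}^{2}$ and a Cauchy--Schwarz estimate relating $\|\eta^{2}-\tilde{\eta}^{2}\|_{M_{G}^{1}}$ to $\|\eta-\tilde{\eta}\|_{M_{G}^{2}}$.

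The principal obstacle is the subadditivity of $\mathbb{E}^{G}$: one cannot split $\mathbb{E}^{G}[X+Y]$ into $\mathbb{E}^{G}[X]+\mathbb{E}^{G}[Y]$, so the classical approach of treating the diagonal and the cross-terms separately (each vanishing under its own linear expectation) is unavailable here. The fix is to package \emph{every} non-diagonal contribution, namely the Itô-type correction in the first identity and the cross-terms from the expansion of $M_{T}^{2}$, into a single $G$-Itô integral and eliminate it by one application of property (3); this is precisely what the choice of $\psi$ above achieves.
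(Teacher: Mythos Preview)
The paper does not actually prove this proposition; it is stated in the preliminaries (Section~2.1) as a known result from Peng's theory, without proof. So there is no ``paper's own proof'' to compare against here.

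Your argument is essentially the standard one and is correct in outline. The key observation you make---that sublinearity forbids splitting $\mathbb{E}^{G}[X+Y]$, so all non-$\langle B\rangle$ contributions must be packaged into a \emph{single} $G$-It\^o integral before applying property~(3)---is exactly the right idea and is how this identity is established in Peng's work.

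One small integrability point deserves care. For $\eta\in M_{G}^{2,0}(0,T)$ with $\xi_{k-1}\in L_{G}^{2}(\Omega_{t_{k-1}})$, the process $\psi_{s}=2\xi_{k-1}^{2}(B_{s}-B_{t_{k-1}})+2M_{t_{k-1}}\xi_{k-1}$ need not lie in $M_{G}^{2}(0,T)$: the term $\xi_{k-1}^{2}(B_{s}-B_{t_{k-1}})$ requires roughly $\xi_{k-1}\in L_{G}^{4}$, and $M_{t_{k-1}}\xi_{k-1}$ requires a similar upgrade, before property~(3) of the $G$-It\^o integral can be invoked. The clean fix is to first take $\xi_{k-1}$ bounded (e.g.\ in $L_{ip}(\Omega_{t_{k-1}})$), which is dense in $L_{G}^{2}(\Omega_{t_{k-1}})$; then $\psi\in M_{G}^{2}(0,T)$ and your argument goes through verbatim. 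The passage from bounded simple processes to all of $M_{G}^{2}(0,T)$ then uses exactly the continuity argument you already sketched. With this adjustment the proof is complete.
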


Now consider the multi-dimensional case. Let $B_{t}$ is a $d$-dimensional
$G$-Brownian motion, and for any $v\in\mathbb{R}^{d},$ denote 
\[
B_{t}^{v}:=\langle v,B_{t}\rangle,
\]
where $\langle\cdot,\cdot\rangle$ is the Euclidean inner product.
Then for $a,\overline{a}\in\mathbb{R}^{d},$ the cross variation process
$\langle B^{a},B^{\overline{a}}\rangle_{t}$ is defined as 
\[
\langle B^{a},B^{\overline{a}}\rangle_{t}=\frac{1}{4}(\langle B^{a+\overline{a}},B^{a+\overline{a}}\rangle_{t}-\langle B^{a-\overline{a}},B^{a-\overline{a}}\rangle_{t}).
\]
Similar to the case of quadratic variation process, we have 
\begin{eqnarray*}
\langle B^{a},B^{\overline{a}}\rangle_{t} & = & (L_{G}^{2}-)\lim_{\mu(\pi_{t}^{N})\rightarrow0}\sum_{j=1}^{k_{N}}(B_{t_{j}^{N}}^{a}-B_{t_{j-1}^{N}}^{a})(B_{t_{j}^{N}}^{\overline{a}}-B_{t_{j-1}^{N}}^{\overline{a}})\\
 & = & B_{t}^{a}B_{t}^{\overline{a}}-\int_{0}^{t}B_{s}^{a}dB_{s}^{\overline{a}}-\int_{0}^{t}B_{s}^{\overline{a}}dB_{s}^{a}.
\end{eqnarray*}

Note that unlike the classical case, the cross variation process is
not deterministic. The following results characterizes the distribution
of $\langle B\rangle_{t}:=(\langle B^{\alpha},B^{\beta}\rangle_{t})_{\alpha,\beta=1}^{d},$
where $B_{t}$ is a $d$-dimensional $G$-Brownian motion and $B_{t}^{\alpha}$
is the $\alpha$-th component of $B_{t}.$
\begin{prop}
Recall that the function $G$ has the representation (\ref{G representation}).
Then $\langle B\rangle_{t}\sim N(t\Sigma,\{0\})$. 
\end{prop}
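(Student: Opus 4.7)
The plan is to identify the law of $\langle B\rangle_{t}$ via the viscosity-solution characterization of maximal distributions. By Peng's definition, the claim $\langle B\rangle_{t}\sim N(t\Sigma,\{0\})$ is equivalent to the assertion that, for every $\varphi\in C_{l,Lip}(S(d))$, the function
\[
u(t,\gamma):=\mathbb{E}^{G}[\varphi(\gamma+\langle B\rangle_{t})],\qquad(t,\gamma)\in[0,\infty)\times S(d),
\]
is the unique viscosity solution of the Hamilton--Jacobi equation
\[
\partial_{t}u-\sup_{Q\in\Sigma}\mathrm{tr}(Q\,D_{\gamma}u)=0,\qquad u(0,\gamma)=\varphi(\gamma),
\]
which by the Hopf--Lax formula has the explicit representation $u(t,\gamma)=\sup_{Q\in\Sigma}\varphi(\gamma+tQ)$. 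My strategy is to verify this PDE characterization by combining an increment structure for $\{\langle B\rangle_{t}\}$, a scaling identity, and a $G$-It\^{o} computation of the mean set.

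First I would show that $\{\langle B\rangle_{t}\}$ has stationary and independent increments in the $G$-sense. By polarization it suffices to treat $\langle B^{a},B^{\bar a}\rangle$, and by the defining $L_{G}^{2}$-limit over partitions, the increment $\langle B\rangle_{t+s}-\langle B\rangle_{t}$ can be realised as the $L_{G}^{2}$-limit of quadratic sums based on partitions of $[t,t+s]$, which involves only the increments $\{B_{u}-B_{t}:u\in[t,t+s]\}$. Since that family, regarded as a process reset at time $t$, again forms a $G$-Brownian motion independent of $\Omega_{t}$ by Definition \ref{G-Brownian motion}, one concludes that $\langle B\rangle_{t+s}-\langle B\rangle_{t}$ is independent of $\Omega_{t}$ and identically distributed with $\langle B\rangle_{s}$. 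An analogous argument, applied to the $G$-Brownian scaling $\{B_{cu}\}_{u}\stackrel{d}{=}\{\sqrt{c}\,\tilde{B}_{u}\}_{u}$ on the defining quadratic sums, yields the self-similarity $\langle B\rangle_{cs}\stackrel{d}{=}c\,\langle B\rangle_{s}$.

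Next I would pin down the ``mean set'' via the $G$-It\^{o} formula applied to $f(x)=\langle Px,x\rangle$ for $P\in S(d)$, obtaining
\[
\mathrm{tr}(P\langle B\rangle_{t})=\langle PB_{t},B_{t}\rangle-2\int_{0}^{t}\langle PB_{s},dB_{s}\rangle.
\]
Taking $\mathbb{E}^{G}$ kills the $G$-It\^{o} integral by property (3) of the integral operator, and solving the $G$-heat equation with initial datum $x\mapsto\langle Px,x\rangle$ yields $\mathbb{E}^{G}[\langle PB_{t},B_{t}\rangle]=2tG(P)$. Hence $\mathbb{E}^{G}[\mathrm{tr}(P\langle B\rangle_{t})]=\sup_{Q\in t\Sigma}\mathrm{tr}(PQ)$, confirming that the mean set of $\langle B\rangle_{t}$ is exactly $t\Sigma$, which matches the Hamiltonian of the target PDE.

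Finally, the ingredients above combine as follows. The stationary/independent increment property yields the sublinear semigroup relation $u(t+s,\gamma)=\mathbb{E}^{G}[u(s,\gamma+\langle B\rangle_{t})]$; expanding $u(s,\gamma+\langle B\rangle_{t})$ to first order in $s$ using the self-similarity $\langle B\rangle_{s}\stackrel{d}{=}s\langle B\rangle_{1}$ together with the previous mean computation, and then recasting the identity in viscosity form, produces $\partial_{t}u-\sup_{Q\in\Sigma}\mathrm{tr}(Q\,D_{\gamma}u)=0$, as required. The main obstacle lies precisely in this last step: rigorously passing from the sublinear semigroup identity to a viscosity PDE requires care in handling the nonlinearity of $\mathbb{E}^{G}$ and invoking Peng's general characterization theorem for maximal distributions so that the infinitesimal computation transfers into a global identification of the full law $\langle B\rangle_{t}\sim N(t\Sigma,\{0\})$.
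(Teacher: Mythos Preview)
The paper does not supply its own proof of this proposition; it appears in the preliminaries section as a background result quoted from Peng's theory (\cite{peng2010nonlinear}), so there is nothing to compare against directly.

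Your outline is correct and is essentially the standard argument in Peng's monograph. The three ingredients you isolate --- stationary independent increments of $\langle B\rangle$ inherited from those of $B$ via the defining $L_G^2$-limits, the scaling $\langle B\rangle_{cs}\stackrel{d}{=}c\langle B\rangle_s$, and the mean computation $\mathbb{E}^G[\mathrm{tr}(P\langle B\rangle_t)]=2tG(P)$ from the $G$-It\^o formula applied to $x\mapsto\langle Px,x\rangle$ --- are exactly what is needed. The obstacle you flag in the last step is genuine but not deep: once you have independent stationary increments and the scaling, the function $u(t,\gamma)=\mathbb{E}^G[\varphi(\gamma+\langle B\rangle_t)]$ satisfies the dynamic programming identity, and the viscosity characterization follows by the same test-function argument Peng uses for the $G$-heat equation itself (Taylor expand the test function, use sublinearity of $\mathbb{E}^G$, and the first-order term is controlled by your mean computation while there is no second-order term because $\langle B\rangle_t$ scales like $t$ rather than $\sqrt{t}$). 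So the step you call the ``main obstacle'' is in fact routine once the first three ingredients are in place; you do not need to invoke a separate black-box characterization theorem.
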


As in the classical case, we also have the important $G$-It$\hat{\mbox{o}}$
formula under $G$-expectation, which takes a similar form to the
classical one. The main difference is that $dB_{t}^{\alpha}\cdot dB_{t}^{\beta}$
should be $d\langle B^{\alpha},B^{\beta}\rangle_{t}$ instead of $\delta_{\alpha\beta}dt.$
We are not going to state the full result of $G$-It$\hat{\mbox{o}}$
formula here. See \cite{peng2010nonlinear} for a detailed discussion.

Now we introduce the notion of SDEs driven by $G$-Brownian motion. 

For $p\geqslant1,$ let $\overline{M}_{G}^{p}(0,T;\mathbb{R}^{n})$
be the completion of $M_{G}^{p,0}(0,T;\mathbb{R}^{n})$ under the
norm 
\[
\|\eta\|_{\overline{M}_{G}^{p}(0,T;\mathbb{R}^{n})}:=(\int_{0}^{T}\mathbb{E}^{G}[|\eta_{t}|^{p}]dt)^{\frac{1}{p}}.
\]
It is easy to see that $\overline{M}_{G}^{p}(0,T;\mathbb{R}^{n})\subset M_{G}^{p}(0,T;\mathbb{R}^{n}).$

Consider the following $N$-dimensional SDE driven by $G$-Brownian
motion over $[0,T]$:
\begin{equation}
dX_{t}=b(t,X_{t})dt+\sum_{\alpha,\beta=1}^{d}h_{\alpha\beta}(t,X_{t})d\langle B^{\alpha},B^{\beta}\rangle_{t}+\sum_{\alpha=1}^{d}V_{\alpha}(t,X_{t})dB_{t}^{\alpha}\label{SDE in preliminaries}
\end{equation}
with initial condition $\xi\in\mathbb{R}^{N}.$ Here we assume that
the coefficients $b^{i},h_{\alpha\beta}^{i},V_{\alpha}^{i}$ are Lipschitz
functions in the space variable, uniformly in time. A solution of
(\ref{SDE in preliminaries}) is a process in $\overline{M}_{G}^{2}(0,T;\mathbb{R}^{N})$
satisfying the equation (\ref{SDE in preliminaries}) in its integral
form.

The existence and uniqueness of (\ref{SDE in preliminaries}) was
studied by Peng \cite{peng2010nonlinear}.
\begin{thm}
There exists a unique solution $X\in\overline{M}_{G}^{2}(0,T;\mathbb{R}^{N})$
to the SDE (\ref{SDE in preliminaries}).
\end{thm}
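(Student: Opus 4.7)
The plan is to prove both existence and uniqueness by a Picard iteration and contraction mapping argument on the Banach space $\overline{M}_{G}^{2}(0,T;\mathbb{R}^{N})$, adapting the classical Itô approach to the sublinear-expectation setting. I would define the map $\Lambda:\overline{M}_{G}^{2}(0,T;\mathbb{R}^{N})\to\overline{M}_{G}^{2}(0,T;\mathbb{R}^{N})$ by
\[
\Lambda(Y)_{t}:=\xi+\int_{0}^{t}b(s,Y_{s})ds+\sum_{\alpha,\beta=1}^{d}\int_{0}^{t}h_{\alpha\beta}(s,Y_{s})d\langle B^{\alpha},B^{\beta}\rangle_{s}+\sum_{\alpha=1}^{d}\int_{0}^{t}V_{\alpha}(s,Y_{s})dB_{s}^{\alpha}.
\]
The three steps would be: (i) show that $\Lambda$ is well-defined, i.e.\ that $b(\cdot,Y)$, $h_{\alpha\beta}(\cdot,Y)$, $V_{\alpha}(\cdot,Y)$ lie in the spaces required to form each integral, and that the result sits back in $\overline{M}_{G}^{2}(0,T;\mathbb{R}^{N})$; (ii) establish a quadratic estimate of Gronwall-compatible form $\mathbb{E}^{G}[|\Lambda(Y)_{t}-\Lambda(\tilde{Y})_{t}|^{2}]\leqslant C\int_{0}^{t}\mathbb{E}^{G}[|Y_{s}-\tilde{Y}_{s}|^{2}]ds$; (iii) iterate to obtain a contraction.

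For step (ii), subadditivity of $\mathbb{E}^{G}$ together with the elementary bound $(a+b+c)^{2}\leqslant 3(a^{2}+b^{2}+c^{2})$ reduces the problem to handling the three types of integrals separately. The Lebesgue term is controlled by Cauchy-Schwarz and the Lipschitz assumption on $b$. The Brownian stochastic integral is handled by the $G$-Itô isometry (or its easier upper variant $\mathbb{E}^{G}[(\int\eta dB)^{2}]\leqslant\overline{\sigma}^{2}\mathbb{E}^{G}[\int\eta^{2}dt]$) combined with the Lipschitz bound on $V_{\alpha}$. The cross-variation term exploits the fact that $\langle B^{\alpha},B^{\beta}\rangle_{t}$ has bounded total variation quasi-surely with total variation dominated by a constant multiple of $t$ (a consequence of $\langle B\rangle_{t}\sim N(t\Sigma,\{0\})$); this, together with Cauchy-Schwarz with respect to $d|\langle B^{\alpha},B^{\beta}\rangle|_{s}$ and the Lipschitz bound on $h_{\alpha\beta}$, gives a bound of the required Gronwall form.

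For step (iii), inducting on the estimate in (ii) yields $\mathbb{E}^{G}[|\Lambda^{n}(Y)_{t}-\Lambda^{n}(\tilde{Y})_{t}|^{2}]\leqslant\frac{(Ct)^{n}}{n!}\|Y-\tilde{Y}\|_{\overline{M}_{G}^{2}(0,T;\mathbb{R}^{N})}^{2}$, so after integration over $[0,T]$ and choosing $n$ large enough, $\Lambda^{n}$ is a strict contraction on the complete space $\overline{M}_{G}^{2}(0,T;\mathbb{R}^{N})$. Banach's fixed point theorem then produces a unique fixed point of $\Lambda^{n}$, which is automatically the unique fixed point of $\Lambda$, and hence the unique solution of the SDE in its integral form.

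The main obstacle I anticipate is not conceptual but technical: carefully justifying that all integrands remain in the correct completions at every stage and that the estimate for the $d\langle B^{\alpha},B^{\beta}\rangle_{s}$ integral holds uniformly across the iteration. Because $\mathbb{E}^{G}$ is sublinear rather than linear, one cannot casually exchange expectation and time-integration, so care is needed when passing from pointwise $\mathbb{E}^{G}$ bounds at time $t$ to norm bounds in $\overline{M}_{G}^{2}$. Once this infrastructure is in place, the Picard iteration proceeds essentially as in the classical Itô theory.
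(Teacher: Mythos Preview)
The paper does not actually prove this theorem: it is stated in the preliminaries as a result due to Peng \cite{peng2010nonlinear}, with no proof given. Your proposed Picard iteration/contraction argument on $\overline{M}_{G}^{2}(0,T;\mathbb{R}^{N})$ is correct and is essentially the method used in the cited reference, so there is nothing to compare against within the paper itself.
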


Finally, we introduce the notion of quasi-sure analysis for $G$-expectation.
It plays an important role in studying pathwise properties of stochastic
processes under the framework of $G$-expectation.

First of all, on the canonical sublinear expectation space $(\Omega,L_{ip}(\Omega),\mathbb{E}^{G}),$
we can prove a refinement of Theorem \ref{representation of E}: there
exists a weakly compact family $\mathcal{P}$ of probability measures
on $(\Omega,\mathcal{B}(\Omega))$, such that for any $X\in L_{ip}(\Omega)$
and $P\in\mathcal{P}$, $\mathbb{E}_{P}[X]$ is well-defined and 
\[
\mathbb{E}^{G}[X]=\max_{P\in\mathcal{P}}\mathbb{E}_{P}[X],\ \ \ \forall X\in L_{ip}(\Omega),
\]
where ``max'' means that the supremum is attainable (for each $X$).
Moreover, there is an explicit characterization of the family $\mathcal{P}$.
Let $ $$G$ be represented in the following way:
\[
G(A)=\frac{1}{2}\sup_{Q\in\Gamma}\mbox{tr}(AQQ^{T}),
\]
for some bounded, closed and convex subset $\Gamma\subset\mathbb{R}^{d\times d},$
and let $\mathcal{A}_{\Gamma}$ be the collection of all $\Gamma$-valued
$\{\mathcal{F}_{t}^{W}:t\geqslant0\}$-adapted processes on $[0,\infty),$
where $\{\mathcal{F}_{t}^{W}:t\geqslant0\}$ is the natural filtration
of the coordinate process on $\Omega.$ Let $\mathcal{P}_{0}$ be
the collection of probability laws of the following classical It$\hat{\mbox{o}}$
integral processes with respect to the standard Wiener measure: 
\[
B_{t}^{\gamma}:=\int_{0}^{t}\gamma_{s}dW_{s},\ t\geqslant0,\ \gamma\in A_{\Gamma}.
\]
Then $\mathcal{P}=\overline{\mathcal{P}_{0}}.$ For the proof of this
result, please refer to \cite{denis2011function}. 

For this particular family $\mathcal{P},$ define the set function
$c$ by
\[
c(A):=\sup_{P\in\mathcal{P}}P(A),\ A\in\mathcal{B}(\Omega).
\]
Then we have the following result.
\begin{thm}
The set function $c$ is a Choquet capacity (for an introduction of
capacity theory, see \cite{choquet1953theory}, \cite{dellacherie1972capacites}).
In other words,

(1) for any $A\in\mathcal{B}(\Omega),$ $0\leqslant c(A)\leqslant1;$

(2) if $A\subset B,$ then $c(A)\leqslant c(B);$

(3) if $A_{n}$ is a sequence in $\mathcal{B}(\Omega),$ then $c(\cup_{n}A_{n})\leqslant\sum_{n}c(A_{n});$

(4) if $A_{n}$ is increasing in $\mathcal{B}(\Omega),$ then $c(\cup A_{n})=\lim_{n\rightarrow\infty}c(A_{n}).$
\end{thm}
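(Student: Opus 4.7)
The plan is to verify each of the four properties by exploiting the fact that $c$ is the upper envelope of a family of genuine probability measures. Properties (1)--(3) are inherited immediately from the corresponding properties of each $P \in \mathcal{P}$; only the continuity assertion (4) requires any real thought.

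For (1), each $P \in \mathcal{P}$ satisfies $0 \leq P(A) \leq 1$, and this bound passes to the supremum. Property (2) follows since $A \subset B$ implies $P(A) \leq P(B)$ for every $P$, so the same inequality holds after taking $\sup_{P \in \mathcal{P}}$ on both sides. For (3), countable subadditivity of each individual $P$ gives
\[
P\left(\bigcup_n A_n\right) \leq \sum_n P(A_n) \leq \sum_n c(A_n),
\]
and taking the supremum on the left-hand side yields the claim.

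For (4), monotonicity already established in (2) gives $c(A_n) \leq c\left(\bigcup_n A_n\right)$ for every $n$, hence $\lim_n c(A_n) \leq c\left(\bigcup_n A_n\right)$. For the reverse, I would fix an arbitrary $P \in \mathcal{P}$ and invoke the ordinary continuity from below of the single countably additive measure $P$, which gives
\[
P\left(\bigcup_n A_n\right) = \lim_n P(A_n) \leq \lim_n c(A_n);
\]
here the last inequality uses $P(A_n) \leq c(A_n)$ together with the fact that the numerical sequence $c(A_n)$ is monotone increasing in $n$. Taking the supremum over $P \in \mathcal{P}$ on the left then gives $c\left(\bigcup_n A_n\right) \leq \lim_n c(A_n)$, and combined with the first half this closes the argument.

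If any step deserves to be singled out as the main obstacle, it is the interchange of $\sup_P$ and $\lim_n$ in the previous display; but this is harmless here because the bounding sequence $c(A_n)$ is monotone increasing and each $P$ is a \emph{single} countably additive measure (so continuity from below is applied to $P$ alone before any supremum is taken). In particular no deeper structural fact about the family $\mathcal{P}$, such as its weak compactness, is required for this theorem.
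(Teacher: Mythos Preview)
Your argument is correct. Each of the four verifications is sound, and your remark that weak compactness of $\mathcal{P}$ plays no role in these four properties is accurate: the supremum of an arbitrary family of probability measures already enjoys (1)--(4), and compactness of $\mathcal{P}$ only becomes relevant for the complementary regularity (continuity from above on closed or compact sets), which is not part of the present statement.

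As for comparison with the paper: this theorem appears in the preliminaries section and is stated there without proof, as a known result from the literature (the paper refers the reader to \cite{choquet1953theory}, \cite{dellacherie1972capacites}, and implicitly to \cite{denis2011function}). So there is no in-paper proof to compare against; your write-up would serve perfectly well as the omitted verification.
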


For any $\mathcal{B}(\Omega)$-measurable random variable $X$ such
that $\mathbb{E}_{P}[X]$ is well-defined for all $P\in\mathcal{P},$
define the upper expectation 
\[
\hat{\mathbb{E}}[X]:=\sup_{P\in\mathcal{P}}\mathbb{E}_{P}[X].
\]
Then we can prove that for any $0\leqslant T\leqslant\infty$ and
$X\in L_{G}^{1}(\Omega_{T}),$ 
\[
\mathbb{E}^{G}[X]=\hat{\mathbb{E}}[X].
\]
For a detailed discussion and other related properties, please refer
to \cite{denis2011function}.

The following Markov inequality and Borel-Cantelli lemma under the
capacity $c$ are important for us.
\begin{thm}
(1) For any $X\in L_{G}^{p}(\Omega)$ and $\lambda>0,$ we have
\[
c(|X|>\lambda)\leqslant\frac{\mathbb{E}^{G}[|X|^{p}]}{\lambda^{p}}.
\]

(2) Let $A_{n}$ be a sequence in $\mathcal{B}(\Omega)$ such that
\[
\sum_{n=1}^{\infty}c(A_{n})<\infty.
\]
Then 
\[
c(\limsup A_{n})=0.
\]

\end{thm}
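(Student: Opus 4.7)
The plan for part (1) is to reduce the capacity Markov inequality to its classical counterpart via the representation formulas just recalled. First I would observe that since $X\in L_G^p(\Omega)$, we have $|X|^p\in L_G^1(\Omega)$, so by the identification $\mathbb{E}^G=\hat{\mathbb{E}}$ on $L_G^1$,
\[
\mathbb{E}^G[|X|^p]=\hat{\mathbb{E}}[|X|^p]=\sup_{P\in\mathcal{P}}\mathbb{E}_P[|X|^p].
\]
Next, picking a Borel (in fact quasi-continuous) representative of $X$ so that $\{|X|>\lambda\}\in\mathcal{B}(\Omega)$, I would apply the classical Markov inequality under each individual $P\in\mathcal{P}$ to obtain $P(|X|>\lambda)\leq\lambda^{-p}\mathbb{E}_P[|X|^p]\leq\lambda^{-p}\mathbb{E}^G[|X|^p]$. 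Taking the supremum over $P\in\mathcal{P}$ on the left, by the very definition of $c$, yields the desired bound.

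Part (2) is a purely abstract Borel--Cantelli argument that uses only the monotonicity and countable subadditivity of $c$ asserted in the preceding theorem. The plan is to write
\[
\limsup_{n\to\infty}A_n=\bigcap_{N\geq 1}\bigcup_{n\geq N}A_n,
\]
so that for every fixed $N$ one has $\limsup A_n\subset\bigcup_{n\geq N}A_n$, and then apply monotonicity followed by countable subadditivity to obtain
\[
c(\limsup A_n)\leq c\Bigl(\bigcup_{n\geq N}A_n\Bigr)\leq\sum_{n\geq N}c(A_n).
\]
The right-hand side is the tail of a convergent series, hence tends to zero as $N\to\infty$, giving $c(\limsup A_n)=0$.

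The only genuine obstacle I anticipate lies in part (1): the elements of $L_G^p(\Omega)$ are defined as equivalence classes under a semi-norm, so strictly speaking $\{|X|>\lambda\}$ must be interpreted through a chosen Borel measurable representative. This is handled by the Denis--Hu--Peng theory of quasi-continuous representatives recalled in the material preceding this theorem; once such a representative is fixed, both $c(|X|>\lambda)$ and $\mathbb{E}_P[|X|^p]$ for every $P\in\mathcal{P}$ become simultaneously well-defined, and the argument above proceeds routinely. Part (2), by contrast, requires nothing beyond the abstract capacity axioms and presents no obstacle.
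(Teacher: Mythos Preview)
The paper does not actually supply a proof of this theorem: it is stated in the preliminaries section as background material imported from the quasi-sure analysis of Denis--Hu--Peng, with no argument given. Your proposal is correct and is exactly the standard proof one finds in that reference---reducing the Markov bound to the classical one under each $P\in\mathcal{P}$ and then taking the supremum, and handling Borel--Cantelli via monotonicity plus countable subadditivity of the capacity---so there is nothing to compare against beyond noting that your write-up matches the expected derivation.
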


\begin{defn}
A property depending on $\omega\in\Omega$ is said to hold quasi-surely,
if it holds outside a $\mathcal{B}(\Omega)$-measurable subset of
zero capacity.
\end{defn}

\subsection{Rough Path Theory and Rough Differential Equations}

Now we introduce some fundamentals in the theory of rough paths and
rough differential equations. For a systematic introduction, please
refer to \cite{friz2010multidimensional}, \cite{lyons2007differential},
\cite{lyons2002system}.

For $n\geqslant1,$ define
\[
T^{(\infty)}(\mathbb{R}^{d}):=\oplus_{k=0}^{\infty}(\mathbb{R}^{d})^{\otimes k}
\]
to be the infinite tensor algebra and 
\[
T^{(n)}(\mathbb{R}^{d}):=\oplus_{k=0}^{n}(\mathbb{R}^{d})^{\otimes k}
\]
to be the truncated tensor algebra of order $n,$ equipped with the
Euclidean norm. Let $\Delta$ be the triangle region $\{(s,t):0\leqslant s<t\leqslant1\}.$
A functional $\mathbf{X}:\ \Delta\rightarrow T^{(n)}(\mathbb{R}^{d})$
of order $n$ is called multiplicative if for any $s<u<t,$
\[
\mathbf{X}_{s,t}=\mathbf{X}_{s,u}\otimes\mathbf{X}_{u,t}.
\]
Such a multiplicative structure is called the Chen identity. It describes
the (nonlinear) additive structure of integrals over different intervals.

A control function $\omega$ is a nonnegative continuous function
on $\Delta$ such that for any $s<u<t,$ 
\[
\omega(s,u)+\omega(u,t)\leqslant\omega(s,t),
\]
and for any $t\in[0,1],$ $\omega(t,t)=0.$ An example of control
function $\omega(s,t)$ is the $1$-variation norm over $[s,t]$ of
a path with bounded variation. 

Let $p\geqslant1$ be a fixed constant. A continuous and multiplicative
functional 
\[
\mathbf{X}_{s,t}=(1,X_{s,t}^{1},\cdots,X_{s,t}^{n})
\]
 of order $n$ has finite $p$-variation if for some control function
$\omega$,
\begin{equation}
|X_{s,t}^{i}|\leqslant\omega(s,t)^{\frac{i}{p}},\ \forall i=1,2,\cdots,n,\ (s,t)\in\Delta.\label{finite p-var}
\end{equation}
$\mathbf{X}$ has finite $p$-variation if and only if for any $i=1,2,\cdots,n,$
\[
\sup_{D}\sum_{l}|X_{t_{l-1},t_{l}}^{i}|^{\frac{p}{i}}<\infty,
\]
where $\sup_{D}$ runs over all finite partitions of $[0,1].$ We
can also introduce the notion of finite $p$-variation for multiplicative
functionals in $T^{(\infty)}(\mathbb{R}^{d})$ by allowing $1\leqslant i<\infty$
in (\ref{finite p-var}). A continuous and multiplicative functional
$\mathbf{X}$ of order $[p]$ with finite $p$-variation is called
a rough path with roughness $p.$ The space of rough paths with roughness
$p$ is denoted by $\Omega_{p}(\mathbb{R}^{d}).$

The following Lyons lifting theorem (see \cite{lyons1998differential})
shows that the higher levels of a rough path $\mathbf{X}$ with roughness
$p$ are uniquely determined by $\mathbf{X}$ itself.
\begin{thm}
\label{Lyons lifting} Let $\mathbf{X}$ be a rough path with roughness
$p.$ Then $\mathbf{X}$ can be uniquely extended to a continuous
and multiplicative functional in $T^{(\infty)}(\mathbb{R}^{d})$ with
finite $p$-variation.
\end{thm}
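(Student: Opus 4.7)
The plan is to proceed by induction on the level $n\geqslant[p]$, constructing the $(n+1)$-th level of the extension from the truncated multiplicative functional of order $n$. Assume the extension has been uniquely defined up to level $n$ with the finite $p$-variation bound $|X^{i}_{s,t}|\leqslant\omega(s,t)^{i/p}$ for $i\leqslant n$ (possibly with $\omega$ replaced by a constant multiple). For a partition $D=\{s=t_{0}<t_{1}<\cdots<t_{m}=t\}$ of $[s,t]$, define
\[
X^{n+1,D}_{s,t}:=\pi_{n+1}\bigl(\mathbf{X}^{(n)}_{t_{0},t_{1}}\otimes\mathbf{X}^{(n)}_{t_{1},t_{2}}\otimes\cdots\otimes\mathbf{X}^{(n)}_{t_{m-1},t_{m}}\bigr),
\]
where $\pi_{n+1}$ denotes the projection to the degree $n+1$ component (with the convention $X^{n+1,(n)}_{u,v}=0$ in the factors). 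The $(n+1)$-th level will be defined as $X^{n+1}_{s,t}:=\lim_{|D|\to0}X^{n+1,D}_{s,t}$.

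The key step is to show this limit exists by a refinement estimate. If $D'=D\cup\{u\}$ with $u\in(t_{k-1},t_{k})$, then by multiplicativity of the already-constructed levels, the difference $X^{n+1,D'}_{s,t}-X^{n+1,D}_{s,t}$ telescopes to a single correction term supported on $(t_{k-1},t_{k})$, whose magnitude is bounded by
\[
\sum_{i=1}^{n}|X^{i}_{t_{k-1},u}|\cdot|X^{n+1-i}_{u,t_{k}}|\leqslant C\,\omega(t_{k-1},t_{k})^{(n+1)/p}.
\]
Choosing $u$ so that $\omega(t_{k-1},u),\omega(u,t_{k})\leqslant\tfrac{2}{m-1}\omega(s,t)$ (always possible by superadditivity of $\omega$), and removing points one at a time from an arbitrary partition $D$ of cardinality $m$ until only $\{s,t\}$ remain, one gets
\[
|X^{n+1,D}_{s,t}-X^{n+1,\{s,t\}}_{s,t}|\leqslant C\,\omega(s,t)^{(n+1)/p}\sum_{k=2}^{m}\bigl(\tfrac{2}{k-1}\bigr)^{(n+1)/p},
\]
and since $(n+1)/p>1$ (because $n\geqslant[p]$), this series converges. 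This not only yields the uniform bound $|X^{n+1}_{s,t}|\leqslant C'\omega(s,t)^{(n+1)/p}$ but also, applied to two partitions $D_{1}\subset D_{2}$, shows that the Riemann sums form a Cauchy net as $|D|\to 0$.

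Once the limit is constructed, the multiplicativity $X^{n+1}_{s,t}=X^{n+1}_{s,u}+X^{n+1}_{u,t}+\sum_{i=1}^{n}X^{i}_{s,u}\otimes X^{n+1-i}_{u,t}$ (i.e.\ Chen's identity at level $n+1$) is inherited from the discrete Riemann sums by taking partitions that contain $u$ and passing to the limit. For uniqueness, if $\widetilde{X}^{n+1}$ is any other extension with finite $p$-variation, the difference $f(s,t):=X^{n+1}_{s,t}-\widetilde{X}^{n+1}_{s,t}$ is additive in the sense $f(s,t)=f(s,u)+f(u,t)$ (the nonlinear cross terms in Chen's identity cancel), and satisfies $|f(s,t)|\leqslant C\omega(s,t)^{(n+1)/p}$ with exponent strictly greater than $1$; iterating the additivity along a partition of mesh going to zero forces $f\equiv 0$. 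The main obstacle is the sharp refinement estimate in the second paragraph: obtaining a bound that is summable requires carefully selecting the point to remove so that the geometric decay in $k$ beats the growth of the binomial cross terms, which is precisely where the hypothesis $p<[p]+1$ is essential.
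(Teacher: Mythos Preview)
The paper does not prove this theorem: it is stated in the preliminaries as a known result and attributed to Lyons \cite{lyons1998differential}. Your sketch is essentially Lyons' original argument (the ``sewing'' / maximal-inequality construction of higher levels), so there is nothing to compare against in the paper itself.

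On the merits of your sketch, the outline is correct, but there is one point that deserves more care for the statement as formulated here. The paper's definition of finite $p$-variation in $T^{(\infty)}(\mathbb{R}^{d})$ requires a \emph{single} control $\omega$ such that $|X^{i}_{s,t}|\leqslant\omega(s,t)^{i/p}$ for \emph{all} $i\geqslant1$. Your induction produces, at each step, a bound $|X^{n+1}_{s,t}|\leqslant C_{n+1}\,\omega(s,t)^{(n+1)/p}$ with a constant $C_{n+1}$ depending on $n$, coming from the tail of the series $\sum_{k\geqslant2}(2/(k-1))^{(n+1)/p}$ together with the constants already accumulated at lower levels. If you do not track these constants, there is no guarantee that a single $\omega$ works for all levels simultaneously. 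Lyons' proof handles this by showing the sharper factorial-type bound $|X^{n}_{s,t}|\leqslant\omega(s,t)^{n/p}/\beta\,\Gamma(n/p)$ (via the neoclassical inequality or an equivalent device), which is what actually closes the induction uniformly in $n$. Your remark ``possibly with $\omega$ replaced by a constant multiple'' does not resolve this, since the multiple would be level-dependent. Also, a minor phrasing issue: when removing points one at a time from a partition with $m$ interior points, the pigeonhole step gives a point $t_{k}$ with $\omega(t_{k-1},t_{k+1})\leqslant\tfrac{2}{m-1}\omega(s,t)$, not a point $u$ with both $\omega(t_{k-1},u)$ and $\omega(u,t_{k})$ small; your bound on the correction term should be in terms of $\omega(t_{k-1},t_{k+1})$.
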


One of the motivation of introducing the concept of rough paths is
to develop the theory of differential equations driven by rough signals. 

If an $\mathbb{R}^{d}$-valued path $X$ has bounded variation, we
know that the Picard iteration for the following differential equation
converges:
\begin{equation}
dY_{t}=V(Y_{t})dX_{t},\label{Lipschitz ODE}
\end{equation}
where $V=(V_{1},\cdots,V_{d})$ is a family of Lipschitz vector fields.
Another way to consider (\ref{Lipschitz ODE}) is to use the Euler
scheme, which can be regarded as the Taylor expansion of functional
of paths. Namely, we can write informally that 
\begin{equation}
Y_{t}-Y_{s}\thicksim\sum_{n=1}^{\infty}\sum_{\alpha_{1},\cdots,\alpha_{n}=1}^{^{d}}V_{\alpha_{1}}\cdots V_{\alpha_{n}}I(Y_{s})\int_{s<u_{1}<\cdots<u_{n}<t}dX_{u_{1}}^{\alpha_{1}}\cdots dX_{u_{n}}^{\alpha_{n}}.\label{Euler scheme}
\end{equation}
From (\ref{Euler scheme}) we can see that the sequence 
\[
\mathbf{X}_{s,t}:=(1,X_{t}-X_{s},\int_{s<u<v<t}dX_{u}\otimes dX_{v},\cdots,\int_{s<u_{1}<\cdots<u_{n}<t}dX_{u_{1}}\otimes\cdots\otimes dX_{u_{n}},\cdots)\in T^{(\infty)}(\mathbb{R}^{d})
\]
contains exactly all the information to determine the solution $Y.$
On the other hand, it can be proved that $\mathbf{X}$ is multiplicative
and of finite $1$-variation. Since $X$ has bounded variation, it
follows from Theorem \ref{Lyons lifting} that $\mathbf{X}$ is the
unique enhancement of $X$. This is the fundamental reason why we
don't need to see the higher levels when solving equation (\ref{Lipschitz ODE})-all
information about $\mathbf{X}$, which uniquely determines the solution
of (\ref{Lipschitz ODE}), is incorporated in the first level.

If the driven signal is rougher, the situation becomes different.
The same thing is that the information to determine the solution lies
in the multiplicative structure in $T^{(\infty)}(\mathbb{R}^{d})$,
while the difference is that, unlike the case of paths with bounded
variation, the classical path itself may not be able to determine
the higher levels which are crucial to characterize the solution of
a differential equation. In other words, we need to specify higher
levels of the classical path in order to make sense of differential
equations. According to Theorem \ref{Lyons lifting}, we know that
the higher levels (levels above $[p]$) of a rough path $\mathbf{X}$
with roughness $p$ are uniquely determined by $\mathbf{X}$ itself.
Therefore, to establish differential equations driven by signals rougher
than paths of bounded variation, we need to interpret the driven signal
as a rough path with certain roughness $p$, that is, the driving
signal should be an element in the space $\Omega_{p}(\mathbb{R}^{d}).$ 

When the driving signal $\mathbf{X}$ is in some smaller space of
$\Omega_{p}(\mathbb{R}^{d})$ in which $\mathbf{X}$ can be approximated
by paths of bounded variation in some sense, we are able to use a
natural approximation procedure to introduce the notion of differential
equations. But first we need to introduce a certain kind of topology.

Define the $p$-variation distance $d_{p}(\cdot,\cdot)$ on $\Omega_{p}(\mathbb{R}^{d})$
by 
\[
d_{p}(\mathbf{X},\mathbf{Y}):=\max_{1\leqslant i\leqslant[p]}\sup_{D}(\sum_{l}|X_{t_{l-1},t_{l}}^{i}-Y_{t_{l-1},t_{l}}^{i}|^{\frac{p}{i}})^{\frac{i}{p}}.
\]
Then $(\Omega_{p}(\mathbb{R}^{d}),d_{p})$ is a complete metric space. 

A continuous path $X\in C([0,1];\mathbb{R}^{d})$ is called smooth
if it has bounded variation. Let 
\[
\Omega_{p}^{\infty}(\mathbb{R}^{d}):=\{\mbox{\ensuremath{\mathbf{X}}}:\ \mbox{\ensuremath{\mathbf{X}} is the unique enhancement of \ensuremath{X}in \ensuremath{T^{([p])}(\mathbb{R}^{d}),}where \ensuremath{X}is smooth}\}
\]
be the subspace of enhanced smooth paths of order $[p]$. The closure
of $\Omega_{p}^{\infty}(\mathbb{R}^{d})$ under the $p$-variation
distance $d_{p},$ denoted by $G\Omega_{p}(\mathbb{R}^{d}),$ is called
the space of geometric rough paths with roughness $p.$

The following theorem, proved by Lyons \cite{lyons1998differential},
which is usually known as the universal limit theorem, enables us
to introduce the notion of differential equations driven by geometric
rough paths.
\begin{thm}
\label{universal limit theorem}Let $V_{1},\cdots,V_{d}\in C_{b}^{[p]+1}(\mathbb{R}^{d})$
be given vector fields on $\mathbb{R}^{N}.$ For a given $y\in\mathbb{R}^{d},$
define the mapping
\[
F(y,\cdot):\ \Omega_{p}^{\infty}(\mathbb{R}^{d})\rightarrow G\Omega_{p}^{\infty}(\mathbb{R}^{N})
\]
in the following way. For any $\mathbf{X}\in\Omega_{p}^{\infty}(\mathbb{R}^{d}),$
let $X$ be the smooth path associated with $\mathbf{X}$ starting
at the origin (i.e., projection of $\mathbf{X}$ onto the first level),
and $Y$ be the unique smooth path which is the solution of the following
ODE:
\[
dY_{t}=V_{\alpha}(Y_{t})dX_{t}^{\alpha}
\]
with $Y_{0}=y.$ $F(y,\mathbf{X})$ is defined to be the enhancement
of $Y$ in $\Omega_{p}^{\infty}(\mathbb{R}^{d}).$ Then the mapping
$F(y,\cdot)$ is continuous with respect to the corresponding $p$-variation
distance $d_{p}.$
\end{thm}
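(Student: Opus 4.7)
The plan is to bypass classical ODE theory, which only provides bounds in terms of the $1$-variation of $X$, and reconstruct $F(y,\mathbf{X})$ intrinsically in the $p$-variation topology. The right quantitative object is an \emph{a priori} estimate
\[
|Y_{s,t}^i|\leqslant C\,\omega_{\mathbf{X}}(s,t)^{i/p},\qquad 1\leqslant i\leqslant [p],
\]
where $\omega_{\mathbf{X}}$ is a control function for $\mathbf{X}$ as in (\ref{finite p-var}); once this bound and its difference analogue are available, continuity in $d_p$ follows. Since $\Omega_p^\infty(\mathbb{R}^d)$ consists of enhancements of smooth paths, the functional we reconstruct will automatically coincide with the classical enhancement of the ODE solution.

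First I would set up a local Euler approximation. For each $(s,t)\in\Delta$, define a candidate enhancement $\widetilde{\mathbf{Y}}_{s,t}\in T^{([p])}(\mathbb{R}^N)$ whose first level is the Taylor expansion from (\ref{Euler scheme}) truncated at order $[p]$, with the higher tensor levels obtained by contracting the appropriate iterated integrals of $\mathbf{X}$ against products of the vector fields $V_\alpha$ and their derivatives evaluated at $Y_s$. Using the $C_b^{[p]+1}$ regularity of the $V_\alpha$, a Taylor-remainder calculation yields the almost-multiplicative bound
\[
\bigl|\widetilde{\mathbf{Y}}_{s,u}\otimes\widetilde{\mathbf{Y}}_{u,t}-\widetilde{\mathbf{Y}}_{s,t}\bigr|\leqslant C\,\omega_{\mathbf{X}}(s,t)^{\theta},\qquad\theta:=\frac{[p]+1}{p}>1.
\]

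Second, the standard fixed-point argument for almost-multiplicative functionals shows that Riemann-type tensor products $\bigotimes_{l}\widetilde{\mathbf{Y}}_{t_{l-1},t_l}$ over partitions converge to a unique continuous multiplicative functional $\mathbf{Y}\in T^{([p])}(\mathbb{R}^N)$ that lies within $O(\omega_{\mathbf{X}}^{\theta})$ of $\widetilde{\mathbf{Y}}$, and by Theorem \ref{Lyons lifting} this determines an element of $\Omega_p(\mathbb{R}^N)$. When $X$ is smooth, $\mathbf{Y}$ coincides with the enhancement of the classical ODE solution because both functionals agree locally with $\widetilde{\mathbf{Y}}$ up to error $\omega^{\theta}$, and in particular the intrinsic bound $|Y_{s,t}^i|\leqslant C_1\omega_{\mathbf{X}}(s,t)^{i/p}$ is obtained. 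For continuity, given $\mathbf{X},\bar{\mathbf{X}}$ with $d_p(\mathbf{X},\bar{\mathbf{X}})<\varepsilon$ and a common control $\omega$ bounded by $R$, a term-by-term comparison of the two Euler approximations (exploiting the Lipschitz dependence of the symbols $V_{\alpha_1}\cdots V_{\alpha_n}I(\cdot)$ on the base point together with the closeness of the iterated integrals of $\mathbf{X}$ and $\bar{\mathbf{X}}$) gives
\[
\bigl|\widetilde{Y}_{s,t}^{i}-\widetilde{\bar{Y}}_{s,t}^{i}\bigr|\leqslant C_2\bigl(\varepsilon+|Y_s-\bar{Y}_s|\bigr)\omega(s,t)^{i/p},
\]
while the difference of the two enhancements remains almost-multiplicative of order $\omega^{\theta}$. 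Running the fixed-point argument on the difference then yields $d_p\bigl(F(y,\mathbf{X}),F(y,\bar{\mathbf{X}})\bigr)\to 0$ as $\varepsilon\to 0$, uniformly on balls of bounded $p$-variation.

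The main obstacle will be the \emph{a priori} estimate itself. Classical Gronwall and Picard arguments control only the $1$-variation of $Y$ by that of $X$, both of which blow up along sequences of smooth paths converging in $d_p$. One must show that the local Euler scheme genuinely captures the correct scaling $\omega(s,t)^{i/p}$ at each level $i\leqslant[p]$, so that the fixed-point machinery can promote the local almost-multiplicative estimates to global $p$-variation bounds. Once this is secured, the same machinery applied to the difference of two Euler approximations gives continuity essentially for free, and the continuous extension of $F(y,\cdot)$ to the geometric rough path space $G\Omega_p(\mathbb{R}^d)$ follows by density.
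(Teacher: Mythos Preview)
The paper does not prove this theorem at all. Theorem~\ref{universal limit theorem} appears in the preliminaries (Section~2.2) and is explicitly attributed to Lyons \cite{lyons1998differential}: the text reads ``The following theorem, proved by Lyons \cite{lyons1998differential}, which is usually known as the universal limit theorem, enables us to introduce the notion of differential equations driven by geometric rough paths.'' No proof is given; the result is simply quoted as background and then used later (e.g.\ in Section~4 to define RDEs driven by $G$-Brownian motion, and in Section~6 via Proposition~\ref{ODE on manifolds}).

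So there is nothing to compare your proposal against within this paper. For what it is worth, your sketch is a recognizable outline of the standard proof strategy (local Euler/Davie-type expansion, almost-multiplicative error of order $\omega^{([p]+1)/p}$, sewing to a multiplicative functional, then a difference estimate for continuity), which is indeed the route taken in \cite{lyons1998differential} and developed in the monographs \cite{lyons2002system}, \cite{friz2010multidimensional}. If you intend to include a proof here, you should be aware that this is a substantial result whose full details occupy many pages in those references; what you have written is a plausible roadmap but not a proof, and in the context of this paper it would be more appropriate simply to cite the literature as the authors do.
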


According to Theorem \ref{universal limit theorem}, there exists
a unique continuous extension of $F(y,\cdot)$ on $G\Omega_{p}(\mathbb{R}^{d})$.
The extended mapping
\[
F(y,\cdot):\ G\Omega_{p}(\mathbb{R}^{d})\rightarrow G\Omega_{p}(\mathbb{R}^{N}),
\]
is called the It$\hat{\mbox{o}}$-Lyons mapping. Such a mapping defines
the (unique) solution in the space $G\Omega_{p}(\mathbb{R}^{d})$
to the following differential equation:
\begin{equation}
dY_{t}=V(Y_{t})dX_{t},\label{rough differential equation}
\end{equation}
with initial value $y$. Equation (\ref{rough differential equation})
is called a rough differential equation driven by $\mathbf{X}$ (or
simply called an RDE), and the solution $\mathbf{Y}$ is called the
full solution of (\ref{rough differential equation}). If we are only
interested in classical paths, then 
\[
Y_{t}:=y+\pi_{1}(\mathbf{Y}),\ \ \ t\in[0,1],
\]
is called the solution of (\ref{rough differential equation}) with
initial value $y.$

\section{The Euler-Maruyama Approximation for SDEs Driven by $G$-Brownian
Motion}

In this section, we are going to establish the Euler-Maruyama approximation
for SDEs driven by $G$-Brownian motion. 

This result can be used to establish the Wong-Zakai type approximation
which reveals the relation between  SDEs (in the sense of $L_{G}^{2}(\Omega;\mathbb{R}^{N})$
by S. Peng) and RDEs (in the sense of rough paths by Lyons) driven
by $G$-Brownian motion. In Section 5, the study of such relation
will be our main focus. However, based on the result in the next section
which reveals the rough path nature of $G$-Brownian motion, we are
going to use the rough Taylor expansion in the theory of RDEs instead
of developing the Wong-Zakai type approximation to show that the solution
of an  SDE solves some associated RDE with a correction term in terms
of the cross variation process of multidimensional $G$-Brownian motion.
Such approach reveals the natural of $G$-Brownian motion and differential
equations in the sense of rough paths in a more fundamental way. 

We also believe that there will be other interesting applications
of the Euler-Maruyama approximation, such as in numerical analysis
under $G$-expectation, and in mathematical finance under uncertainty.

Consider the following $N$-dimensional SDE driven by the canonical
$d$-dimensional $G$-Brownian motion over $[0,1]$ on the sublinear
expectation space $(\Omega,L_{G}^{2}(\Omega),\mathbb{E}^{G})$ which
is the $L_{G}^{2}$-completion of the canonical path space $(\Omega,L_{ip}(\Omega),\mathbb{E}^{G})$:
\begin{eqnarray}
dX_{t} & = & b(X_{t})dt+h_{\alpha\beta}(X_{t})d\langle B^{\alpha},B^{\beta}\rangle_{t}+V_{\alpha}(X_{t})dB_{t}^{\alpha},\label{Ito SDE}
\end{eqnarray}
with initial condition $X_{0}=\xi\in\mathbb{R}^{N},$ where the coefficients
$b^{i},h_{\alpha\beta}^{i},V_{\alpha}^{i}$ are bounded and uniformly
Lipschitz. The existence and uniqueness of solution is studied by
Peng \cite{peng2010nonlinear}.

The Euler-Maruyama approximation of the solution $X_{t}$ of (\ref{Ito SDE})
is defined as follows. 

For $n\geqslant1,$ consider the dyadic partition of the time interval
$[0,1],$ i.e., 
\[
t_{k}^{n}=\frac{k}{2^{n}},\ k=0,1,\cdots,2^{n}.
\]
Define $X_{t}^{n}$ to be the approximation of $X_{t}$ in the following
evolutive way:
\begin{eqnarray*}
X_{0}^{n} & = & \xi,
\end{eqnarray*}
and for $t\in[t_{k-1}^{n},t_{k}^{n}],$
\[
(X_{t}^{n})^{i}=(X_{k-1}^{n})^{i}+V_{\alpha}^{i}(X_{k-1}^{n})\Delta_{k}^{n}B^{\alpha}+b^{i}(X_{k-1}^{n})\Delta t^{n}+h_{\alpha\beta}^{i}(X_{k-1}^{n})\Delta_{k}^{n}\langle B^{\alpha},B^{\beta}\rangle,
\]
where 
\[
X_{k-1}^{n}:=X_{t_{k-1}^{n}}^{n},\ \Delta_{k}^{n}B^{\alpha}:=B_{t_{k}^{n}}^{\alpha}-B_{t_{k-1}^{n}}^{\alpha},\ \Delta t^{n}:=\frac{1}{2^{n}},\ \Delta_{k}^{n}\langle B^{\alpha},B^{\beta}\rangle:=\langle B^{\alpha},B^{\beta}\rangle_{t_{k}^{n}}-\langle B^{\alpha},B^{\beta}\rangle_{t_{k-1}^{n}}.
\]

In this section, we are going to prove that $X_{t}^{n}$ converges
to the solution $X_{t}$ of (\ref{Ito SDE}) in $L_{G}^{2}(\Omega;\mathbb{R}^{N})$
with convergence rate $0.5,$ which coincides with the classical case
when $B_{t}$ reduces to a classical Brownian motion.

First of all, the following lemmas is useful for us.
\begin{lem}
\label{lem 1}Let $\eta_{t}$ be a bounded process in $M_{G}^{2}(0,1).$
Then for any $v\in\mathbb{R}^{d},$ $0\leqslant s<t\leqslant1,$
\[
\mathbb{E}^{G}[(\int_{s}^{t}\eta_{u}d\langle B^{v}\rangle_{u})^{2}]\leqslant\overline{\sigma}_{v}^{2}(t-s)\mathbb{E}^{G}[\int_{s}^{t}\eta_{u}^{2}d\langle B^{v}\rangle_{u}],
\]
where $\overline{\sigma}_{v}^{2}:=2G(v\cdot v^{T})$ and $B^{v}:=\langle v,B\rangle,$
in which $\langle\cdot,\cdot\rangle$ denotes the Euclidean inner
product of $\mathbb{R}^{d}.$\end{lem}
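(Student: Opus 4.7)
My plan is to treat the inequality as a pathwise Cauchy--Schwarz estimate for the random Stieltjes measure $d\langle B^v\rangle_u$, combined with a quasi-sure domination of the increments of $\langle B^v\rangle$ by $\overline{\sigma}_v^2\,du$. Once that domination is in place, the inequality reduces to the classical Cauchy--Schwarz applied under each probability measure appearing in the Denis--Hu--Peng representation of $\mathbb{E}^G$.

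First, by density of simple processes in $M_G^2(0,1)$, it suffices to prove the inequality for bounded $\eta\in M_G^{2,0}(0,1)$; for such $\eta$ both sides are well-defined Lebesgue--Stieltjes integrals $P$-almost surely for each $P\in\mathcal{P}$. The key structural input is the representation $\mathcal{P}=\overline{\mathcal{P}_0}$ recalled in the preliminaries: for each $P=P_\gamma\in\mathcal{P}_0$, the coordinate process is realised as $B_t = \int_0^t \gamma_s\, dW_s$ with $\gamma$ a $\Gamma$-valued adapted process, so that classically
\[
\langle B^v\rangle_t = \int_0^t |\gamma_s^T v|^2\, ds.
\]
Since $|\gamma_s^T v|^2 \le \sup_{Q\in\Gamma}|Q^T v|^2 = \sup_{B\in\Sigma} v^T B v = 2G(vv^T) = \overline{\sigma}_v^2$, we obtain
\[
\langle B^v\rangle_t - \langle B^v\rangle_s \le \overline{\sigma}_v^2 (t-s) \qquad P\text{-a.s.},
\]
which extends to all of $\mathcal{P}$ by the closure/weak-compactness passage.

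Given this $P$-a.s. bound on the total mass, Cauchy--Schwarz for the positive measure $d\langle B^v\rangle_u$ on $[s,t]$ yields, $P$-almost surely,
\[
\left(\int_s^t \eta_u\, d\langle B^v\rangle_u\right)^2 \le (\langle B^v\rangle_t - \langle B^v\rangle_s)\int_s^t \eta_u^2\, d\langle B^v\rangle_u \le \overline{\sigma}_v^2 (t-s)\int_s^t \eta_u^2\, d\langle B^v\rangle_u.
\]
Taking $\mathbb{E}_P$ of both sides, bounding the right-hand side by $\hat{\mathbb{E}} = \sup_{P\in\mathcal{P}}\mathbb{E}_P$, and taking the supremum on the left gives the desired estimate; the identification $\mathbb{E}^G=\hat{\mathbb{E}}$ on $L_G^1$ from Denis--Hu--Peng finishes the argument. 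The extension from simple $\eta$ to bounded $\eta\in M_G^2(0,1)$ uses continuity of the map $\eta\mapsto \int\eta\, d\langle B^v\rangle$ on $M_G^1$, together with the comparability $\underline{\sigma}_v^2\, du \le d\langle B^v\rangle_u \le \overline{\sigma}_v^2\, du$ (in the same quasi-sure sense) to ensure that both sides of the inequality are continuous in the approximating sequence.

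The main obstacle is the quasi-sure upper bound $\langle B^v\rangle_t - \langle B^v\rangle_s \le \overline{\sigma}_v^2 (t-s)$, which is not immediate from the $G$-distributional characterisation $\langle B\rangle_t \sim N(t\Sigma,\{0\})$ alone and genuinely requires either the Denis--Hu--Peng representation or an equivalent PDE/viscosity argument. The remaining steps --- pathwise Cauchy--Schwarz, passage through the supremum, and the simple-process approximation --- are routine once this pointwise domination is secured.
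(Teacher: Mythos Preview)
Your argument is correct, but it proceeds along a different line from the paper's own proof. Both proofs start the same way: reduce to bounded simple $\eta$ and apply the pathwise Cauchy--Schwarz inequality for the nonnegative measure $d\langle B^v\rangle_u$ to get
\[
\Big(\int_s^t \eta_u\,d\langle B^v\rangle_u\Big)^2 \le (\langle B^v\rangle_t-\langle B^v\rangle_s)\int_s^t \eta_u^2\,d\langle B^v\rangle_u.
\]
From here you go \emph{extrinsically}: you invoke the Denis--Hu--Peng representation to obtain the quasi-sure domination $\langle B^v\rangle_t-\langle B^v\rangle_s\le\overline{\sigma}_v^2(t-s)$, plug it in pathwise, and then pass to the supremum over $P\in\mathcal{P}$. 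The paper instead stays \emph{intrinsic} to the $G$-calculus: writing $X=\langle B^v\rangle_t-\langle B^v\rangle_s$ and using the bound $\int\eta^2\,d\langle B^v\rangle\le MX$, it notes the elementary inequality
\[
X\!\int_s^t\!\eta_u^2\,d\langle B^v\rangle_u \;\le\; M\,(X-c(t-s))^{+}X \;+\; c(t-s)\!\int_s^t\!\eta_u^2\,d\langle B^v\rangle_u,\qquad c\ge\overline{\sigma}_v^2,
\]
and then kills the first term in $G$-expectation using only the maximal distribution $X\sim N([\underline{\sigma}_v^2,\overline{\sigma}_v^2](t-s),\{0\})$, since $\mathbb{E}^G[(X-c(t-s))^{+}X]=\sup_{\underline{\sigma}_v^2\le x\le\overline{\sigma}_v^2}(x-c)^{+}x\,(t-s)^2=0$. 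Sublinearity finishes. Your route is conceptually more direct and makes the inequality transparent as a pointwise estimate, at the cost of importing the heavier representation machinery (and the slightly delicate passage from $\mathcal{P}_0$ to its weak closure); the paper's route avoids the representation entirely and uses nothing beyond the distribution of $\langle B^v\rangle$ and sublinearity, which keeps the lemma self-contained within the $G$-expectation framework.
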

\begin{proof}
By approximation, it suffices to consider 
\[
\eta_{u}=\sum_{j=1}^{k}\zeta_{j-1}1_{[u_{j-1},u_{j})},
\]
where $s=u_{0}<u_{1}<\cdots<u_{k}=t$ and $\zeta_{j}\in L_{ip}(\Omega_{u_{j}})$
are bounded. In this case, by definition
\[
\int_{s}^{t}\eta_{u}d\langle B^{v}\rangle_{u}=\sum_{j=1}^{k}\zeta_{j-1}(\langle B^{v}\rangle_{u_{j}}-\langle B^{v}\rangle_{u_{j-1}}),
\]
and 
\[
\int_{s}^{t}\eta_{u}^{2}d\langle B^{v}\rangle_{u}=\sum_{j=1}^{k}\zeta_{j-1}^{2}(\langle B^{v}\rangle_{u_{j}}-\langle B^{v}\rangle_{u_{j-1}}),
\]
which are both defined in the pathwise sense for step functions. Since
$\langle B^{v}\rangle$ is increasing, the Cauchy-Schwarz inequality
yields that
\[
(\int_{s}^{t}\eta_{u}d\langle B^{v}\rangle_{u})^{2}\leqslant(\langle B^{v}\rangle_{t}-\langle B^{v}\rangle_{s})\cdot\int_{s}^{t}\eta_{u}^{2}d\langle B^{v}\rangle_{u}.
\]
Since $\zeta_{j}$ are bounded, if we use $M$ to denote an upper
bound of $\eta_{u}^{2},$ it follows that for any $c\geqslant\overline{\sigma}_{v}^{2},$
\[
(\int_{s}^{t}\eta_{u}d\langle B^{v}\rangle_{u})^{2}\leqslant M(\langle B^{v}\rangle_{t}-\langle B^{v}\rangle_{s}-c(t-s))^{+}(\langle B^{v}\rangle_{t}-\langle B^{v}\rangle_{s})+c(t-s)\int_{s}^{t}\eta_{u}^{2}d\langle B^{v}\rangle_{u}.
\]

Let $\varphi(x)=(x-c(t-s))^{+}x.$ Since $\langle B^{v}\rangle_{t}-\langle B^{v}\rangle_{s}$
is $N([\underline{\sigma}_{v}^{2},\overline{\sigma}_{v}^{2}]\times\{0\})$-distributed,
it follows that 
\begin{eqnarray*}
\mathbb{E}^{G}[\varphi(\langle B^{v}\rangle_{t}-\langle B^{v}\rangle_{s})] & = & \sup_{\underline{\sigma}_{v}^{2}\leqslant x\leqslant\overline{\sigma}_{v}^{2}}\varphi(x(t-s))\\
 & = & (t-s)^{2}\sup_{\underline{\sigma}_{v}^{2}\leqslant x\leqslant\overline{\sigma}_{v}^{2}}(x-c)^{+}x\\
 & = & 0.
\end{eqnarray*}
Therefore, by the sub-linearity of $G,$ we have 
\[
\mathbb{E}^{G}[(\int_{s}^{t}\eta_{u}d\langle B^{v}\rangle_{u})^{2}]\leqslant c(t-s)\mathbb{E}^{G}[\int_{s}^{t}\eta_{u}^{2}d\langle B^{v}\rangle_{u}],\ \ \ c\geqslant\overline{\sigma}_{v}^{2}.
\]

Now the proof is complete.
\end{proof}

Now we are in position to state and prove our main result of this
section.
\begin{thm}
\label{Euler-Maruyama}We have the following error estimate for the
Euler-Maruyama approximation: 
\[
\sup_{t\in[0,1]}\mathbb{E}^{G}[|X_{t}^{n}-X_{t}|^{2}]\leqslant C\Delta t^{n},
\]
where $C$ is some positive constant only depending on $d,N,G$ and
the coefficients of (\ref{Ito SDE}). In particular,
\[
\lim_{n\rightarrow\infty}\sup_{t\in[0,1]}\mathbb{E}^{G}[|X_{t}^{n}-X_{t}|^{2}]=0.
\]

\end{thm}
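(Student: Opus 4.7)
The plan is to follow the classical Euler--Maruyama strategy, but reinterpreted within $G$-expectation using the sublinearity of $\mathbb{E}^{G}$, the $G$-It\^o isometry, and Lemma \ref{lem 1} to control the $d\langle B^{\alpha},B^{\beta}\rangle$--terms. First I would rewrite the scheme in integral form: letting $\bar{X}_{s}^{n}:=X_{k-1}^{n}$ for $s\in[t_{k-1}^{n},t_{k}^{n})$, the definition of $X_{t}^{n}$ is equivalent to
\[
X_{t}^{n}=\xi+\int_{0}^{t}b(\bar{X}_{s}^{n})\,ds+\int_{0}^{t}h_{\alpha\beta}(\bar{X}_{s}^{n})\,d\langle B^{\alpha},B^{\beta}\rangle_{s}+\int_{0}^{t}V_{\alpha}(\bar{X}_{s}^{n})\,dB_{s}^{\alpha}.
\]
Subtracting the integral form of \eqref{Ito SDE} yields $X_{t}^{n}-X_{t}$ as a sum of three integrals whose integrands are $b(\bar{X}_{s}^{n})-b(X_{s})$, $h_{\alpha\beta}(\bar{X}_{s}^{n})-h_{\alpha\beta}(X_{s})$, and $V_{\alpha}(\bar{X}_{s}^{n})-V_{\alpha}(X_{s})$.

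Next I would square, apply $(a+b+c)^{2}\leqslant 3(a^{2}+b^{2}+c^{2})$, take $\mathbb{E}^{G}$ and use subadditivity. The Lebesgue integral term is handled by Cauchy--Schwarz and Fubini (valid under $\mathbb{E}^{G}$). The $dB^{\alpha}$ term is controlled by the $G$-It\^o isometry and the bound $\underline{\sigma}^{2}\mathbb{E}^{G}[\int\eta_{t}^{2}dt]\leqslant\mathbb{E}^{G}[\int\eta_{t}^{2}d\langle B^{\alpha}\rangle_{t}]\leqslant\overline{\sigma}^{2}\mathbb{E}^{G}[\int\eta_{t}^{2}dt]$. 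The $d\langle B^{\alpha},B^{\beta}\rangle$ term is where Lemma \ref{lem 1} comes in: writing $\langle B^{\alpha},B^{\beta}\rangle$ as a polarization of quadratic variations $\langle B^{v}\rangle$, the lemma gives, for a bounded $\eta$,
\[
\mathbb{E}^{G}\!\Bigl[\Bigl(\int_{0}^{t}\eta_{s}\,d\langle B^{\alpha},B^{\beta}\rangle_{s}\Bigr)^{\!2}\Bigr]\leqslant C\int_{0}^{t}\mathbb{E}^{G}[\eta_{s}^{2}]\,ds,
\]
after another use of Lemma \ref{lem 1} (or of the distributional bound $\langle B\rangle_{t}\sim N(t\Sigma,\{0\})$). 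Using the Lipschitz assumption on the coefficients, all three terms combine into
\[
\mathbb{E}^{G}[|X_{t}^{n}-X_{t}|^{2}]\leqslant K\int_{0}^{t}\mathbb{E}^{G}[|\bar{X}_{s}^{n}-X_{s}|^{2}]\,ds\leqslant 2K\int_{0}^{t}\!\bigl(\mathbb{E}^{G}[|\bar{X}_{s}^{n}-X_{s}^{n}|^{2}]+\mathbb{E}^{G}[|X_{s}^{n}-X_{s}|^{2}]\bigr)ds.
\]

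The remaining ingredient is the local error $\mathbb{E}^{G}[|X_{s}^{n}-\bar{X}_{s}^{n}|^{2}]$ on a single sub-interval $[t_{k-1}^{n},t_{k}^{n})$. Plugging in the one-step expansion and using boundedness of $b,h,V$ together with the estimates $\mathbb{E}^{G}[(B_{s}^{\alpha}-B_{t_{k-1}^{n}}^{\alpha})^{2}]=\overline{\sigma}^{2}(s-t_{k-1}^{n})$, $\mathbb{E}^{G}[(\langle B^{\alpha},B^{\beta}\rangle_{s}-\langle B^{\alpha},B^{\beta}\rangle_{t_{k-1}^{n}})^{2}]\leqslant C(s-t_{k-1}^{n})^{2}$ (from the boundedness of $\Sigma$), and $(s-t_{k-1}^{n})\leqslant\Delta t^{n}$, one obtains a uniform bound $\mathbb{E}^{G}[|X_{s}^{n}-\bar{X}_{s}^{n}|^{2}]\leqslant C'\Delta t^{n}$.

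Putting these together gives $e_{n}(t):=\mathbb{E}^{G}[|X_{t}^{n}-X_{t}|^{2}]\leqslant C\Delta t^{n}+C\int_{0}^{t}e_{n}(s)\,ds$, and a Gronwall argument on $[0,1]$ then yields the claimed bound uniformly in $t$. I expect the main technical obstacle to be the cross-variation term $\int \eta\, d\langle B^{\alpha},B^{\beta}\rangle$: Lemma \ref{lem 1} is stated only for $d\langle B^{v}\rangle$, so one must be careful to reduce the cross case via polarization and the triangle inequality for $\mathbb{E}^{G}$ while preserving the correct order in $\Delta t^{n}$. Once that reduction is in hand, the rest of the argument parallels the classical Euler--Maruyama proof, with sublinearity of $\mathbb{E}^{G}$ replacing expectation.
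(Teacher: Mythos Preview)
Your proposal is correct and follows essentially the same route as the paper: decompose $X_t^n-X_t$ into Lebesgue, $dB^\alpha$, and $d\langle B^\alpha,B^\beta\rangle$ integrals, control the $dB$ part by the $G$-It\^o isometry and the cross-variation part by polarization together with Lemma~\ref{lem 1}, separate out the one-step local error $\mathbb{E}^G[|X_s^n-\bar X_s^n|^2]\leqslant C\Delta t^n$, and close with Gronwall. The only cosmetic difference is that the paper splits the integrand as $(f(X_s)-f(X_s^n))+(f(X_s^n)-f(\bar X_s^n))$ before squaring and kills the $dB$ cross terms over sub-intervals by conditioning, whereas you apply the isometry on $[0,t]$ directly and split after using Lipschitz; both lead to the same Gronwall inequality.
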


\begin{proof}
For $t\in[t_{k-1}^{n},t_{k}^{n}],$ by construction we have
\[
X_{t}^{i}-(X_{t}^{n})^{i}=I_{1}^{i}+J_{1}^{i}+K_{1}^{i}+I_{2}^{i}+J_{2}^{i}+K_{2}^{i},
\]
where
\begin{eqnarray*}
I_{1}^{i} & = & \sum_{l=1}^{k-1}\int_{t_{l-1}^{n}}^{t_{l}^{n}}(V_{\alpha}^{i}(X_{s})-V_{\alpha}^{i}(X_{s}^{n}))dB_{s}^{\alpha}+\int_{t_{k-1}^{n}}^{t}(V_{\alpha}^{i}(X_{s})-V_{\alpha}^{i}(X_{s}^{n}))dB_{s}^{\alpha},\\
J_{1}^{i} & = & \sum_{l=1}^{k-1}\int_{t_{l-1}^{n}}^{t_{l}^{n}}(b^{i}(X_{s})-b^{i}(X_{s}^{n}))ds+\int_{t_{k-1}^{n}}^{t}(b^{i}(X_{s})-b^{i}(X_{s}^{n}))ds,\\
K_{1}^{i} & = & \sum_{l=1}^{k-1}\int_{t_{l-1}^{n}}^{t_{l}^{n}}(h_{\alpha\beta}^{i}(X_{s})-h_{\alpha\beta}^{i}(X_{s}^{n}))d\langle B^{\alpha},B^{\beta}\rangle_{s}+\int_{t_{k-1}^{n}}^{t}(h_{\alpha\beta}^{i}(X_{s})-h_{\alpha\beta}^{i}(X_{s}^{n}))d\langle B^{\alpha},B^{\beta}\rangle_{s},\\
I_{2}^{i} & = & \sum_{l=1}^{k-1}\int_{t_{l-1}^{n}}^{t_{l}^{n}}(V_{\alpha}^{i}(X_{s}^{n})-V_{\alpha}^{i}(X_{l-1}^{n}))dB_{s}^{\alpha}+\int_{t_{k-1}^{n}}^{t}(V_{\alpha}^{i}(X_{s})-V_{\alpha}^{i}(X_{l-1}^{n}))dB_{s}^{\alpha},\\
J_{2}^{i} & = & \sum_{l=1}^{k-1}\int_{t_{l-1}^{n}}^{t_{l}^{n}}(b^{i}(X_{s}^{n})-b^{i}(X_{l-1}^{n}))ds+\int_{t_{k-1}^{n}}^{t}(b^{i}(X_{s})-b^{i}(X_{l-1}^{n}))ds,\\
K_{2}^{i} & = & \sum_{l=1}^{k-1}\int_{t_{l-1}^{n}}^{t_{l}^{n}}(h_{\alpha\beta}^{i}(X_{s})-h_{\alpha\beta}^{i}(X_{s}^{n}))d\langle B^{\alpha},B^{\beta}\rangle_{s}+\int_{t_{k-1}^{n}}^{t}(h_{\alpha\beta}^{i}(X_{s})-h_{\alpha\beta}^{i}(X_{s}^{n}))d\langle B^{\alpha},B^{\beta}\rangle_{s}.
\end{eqnarray*}
It follows that 
\begin{equation}
(X_{t}^{i}-(X_{t}^{n})^{i})^{2}\leqslant6((I_{1}^{i})^{2}+(J_{1}^{i})^{2}+(K_{1}^{i})^{2}+(I_{2}^{i})^{2}+(J_{2}^{i})^{2}+(K_{2}^{i})^{2}).\label{solution minus Euler}
\end{equation}

Throughout the rest of this section, we will always use the same notation
$C$ to denote constants only depending on $d,N,G$ and the coefficients
of (\ref{Ito SDE}), although they may be different from line to line.

Now the following estimates are important for further development.

(1) From the $G$-It$\hat{\mbox{o}}$ isometry, the distribution of
$\langle B^{\alpha}\rangle$ and the Lipschitz property, we have,
\[
\mathbb{E}^{G}[(\int_{t_{l-1}^{n}}^{u}(V_{\alpha}^{i}(X_{s})-V_{\alpha}^{i}(X_{s}^{n}))dB_{s}^{\alpha})^{2}]\leqslant C\int_{t_{l-1}^{n}}^{u}\mathbb{E}^{G}[|X_{s}-X_{s}^{n}|^{2}]ds,\ \forall u\in[t_{l-1}^{n},t_{l}^{n}].
\]

(2) Similarly, by Cauchy-Schwarz inequality, we have 
\[
\mathbb{E}^{G}[(\int_{t_{l-1}^{n}}^{u}(b^{i}(X_{s})-b^{i}(X_{s}^{n}))ds)^{2}]\leqslant C(u-t_{l-1}^{n})\int_{t_{l-1}^{n}}^{u}\mathbb{E}^{G}[|X_{s}-X_{s}^{n}|^{2}]ds,\ \forall u\in[t_{l-1}^{n},t_{l}^{n}].
\]
By the definition of $\langle B^{\alpha},B^{\beta}\rangle$ and Lemma
\ref{lem 1}, we also have 
\[
\mathbb{E}^{G}[(\int_{t_{l-1}^{n}}^{u}(h_{\alpha\beta}^{i}(X_{s})-h_{\alpha\beta}^{i}(X_{s}^{n}))d\langle B^{\alpha},B^{\beta}\rangle_{s})^{2}]\leqslant C(u-t_{l-1}^{n})\int_{t_{l-1}^{n}}^{u}\mathbb{E}^{G}[|X_{s}-X_{s}^{n}|^{2}]ds,\ \forall u[t_{l-1}^{n},t_{l}^{n}].
\]

(3) By construction and similar arguments to (1), (2), we have 
\begin{eqnarray*}
\mathbb{E}^{G}[(\int_{t_{l-1}^{n}}^{u}(V_{\alpha}^{i}(X_{s}^{n})-V_{\alpha}^{i}(X_{l-1}^{n}))dB_{s}^{\alpha})^{2}] & \leqslant & C(u-t_{l-1}^{n})^{2},\\
\mathbb{E}^{G}[(\int_{t_{l-1}^{n}}^{u}(b^{i}(X_{s}^{n})-b^{i}(X_{l-1}^{n}))ds)^{2}] & \leqslant & C(u-t_{l-1}^{n})^{3},\\
\mathbb{E}^{G}[(\int_{t_{l-1}^{n}}^{u}(h_{\alpha\beta}^{i}(X_{s})-h_{\alpha\beta}^{i}(X_{s}^{n}))d\langle B^{\alpha},B^{\beta}\rangle_{s})^{2}] & \leqslant & C(u-t_{l-1}^{n})^{3},
\end{eqnarray*}
for all $u\in[t_{l-1}^{n},t_{l}^{n}].$

(4) By conditioning and from the properties of It$\hat{\mbox{o}}$
integral with respect to $G$-Brownian motion, we know that the $G$-expectation
of each ``cross term'' in $(I_{1}^{i})^{2}$ and in $(I_{2}^{i})^{2}$
is zero.

Combining (1) to (4) and applying the following elementary inequality
to $(J_{1}^{i})^{2},(J_{2}^{i})^{2},(K_{1}^{i})^{2}$ and $(K_{2}^{i})^{2}$:
\[
(a_{1}+\cdots+a_{m})^{2}\leqslant m(a_{1}^{2}+\cdots+a_{m}^{2}),
\]
it is not hard to obtain that 
\[
\mathbb{E}^{G}[\|X_{t}-X_{t}^{n}\|^{2}]\leqslant C\int_{0}^{t}\mathbb{E}^{G}[\|X_{s}-X_{s}^{n}\|^{2}]ds+C(\Delta t^{n}),\ \forall t\in[0,1].
\]
By using Gronwall inequality, we arrive at 
\[
\mathbb{E}^{G}[\|X_{t}-X_{t}^{n}\|^{2}]\leqslant C(\Delta t^{n}),
\]
which completes the proof of the theorem.
\end{proof}

\section{$G$-Brownian Motion as Rough Paths and RDEs Driven by $G$-Brownian
Motion}

In this section, we are going to study the nature of sample paths
of $G$-Brownian motion under the framework of rough path theory.
More precisely, we are going to show that: on the canonical path space,
outside a Borel-measurable set of capacity zero, the sample paths
of $G$-Brownian motion can be enhanced to the second level in a canonical
way so that they become geometric rough paths with roughness $2<p<3.$
As pointed out before, such a result will enable us to establish RDEs
driven by $G$-Brownian motion in the space of geometric rough paths. 

Recall that $(\Omega,L_{ip}(\Omega),\mathbb{E}^{G})$ is the canonical
path space associated with the function $G,$ on which the coordinate
process 
\[
B_{t}(\omega):=\omega_{t},\ t\in[0,1],
\]
 is a $d$-dimensional $G$-Brownian motion with continuous sample
paths.

By the following moment inequality for $B_{t}$:
\begin{equation}
\mathbb{E}^{G}[|B_{t}-B_{s}|^{2q}]\leqslant C_{q}(t-s)^{q},\ \forall0\leqslant s<t\leqslant1,\ q>1,\label{moment inequality}
\end{equation}
and the generalized Kolmogorov criterion (see \cite{peng2010nonlinear}
for details), we know that for quasi-surely, the sample paths of $B_{t}$
are $\alpha$-H$\ddot{\mbox{o}}$lder continuous for any $\alpha\in(0,\frac{1}{2}).$
Therefore, if the sample paths of $B_{t}$ can be regarded as objects
in the space of geometric rough paths, the correct roughness should
be $2<p<3$ (so we should look for the enhancement of $B_{t}$ to
the second level); or in other words, the right topology we should
work with is the $p$-variation topology induced by the $p$-variation
distance $d_{p}$ on the space of geometric rough paths with roughness
$2<p<3$. The situation here is the same as the classical Brownian
motion, and the fundamental reason behind lies in the distribution
of $B_{t}$ (or more precisely, the moment inequality (\ref{moment inequality})),
which yields the same kind of H$\ddot{\mbox{o}}$lder continuity for
sample paths of $B_{t}$ as the classical one.

From now on, we will assume that $p\in(2,3)$ is some fixed constant.

As in the last section, for $n\geqslant1,$ $k=0,1,\cdots,2^{n},$
let $t_{k}^{n}=\frac{k}{2^{n}}$ be the dyadic partition of $[0,1],$
and let $B_{t}^{n}$ be the piecewise linear approximation of $B_{t}$
over the partition points $\{t_{0}^{n},t_{1}^{n},\cdots,t_{2^{n}}^{n}\}.$
Since the sample paths of $B_{t}^{n}$ are smooth, $B_{t}^{n}$ has
a unique enhancement 
\[
\boldsymbol{B}_{s,t}^{n}=(1,B_{s,t}^{n,1},B_{s,t}^{n,2}),\ 0\leqslant s<t\leqslant1,
\]
 to the space $G\Omega_{p}(\mathbb{R}^{d})$ of geometric rough paths
with roughness $p$ (in fact, for any $p\geqslant1$) determined by
iterated integrals. 

Our goal is to show that for quasi-surely, $\boldsymbol{B}^{n}$ is
a Cauchy sequence under the $p$-variation distance $d_{p}.$ It follows
that for quasi-surely, the sample paths of $B_{t}$ can be enhanced
to the second level as geometric rough paths with roughness $p$,
which are defined as limits of $\boldsymbol{B}^{n}$ under $d_{p}.$
Such an enhancement can be regarded as a canonical lifting by using
dyadic approximations.

Throughout the rest of this section, we will use $\|\cdot\|_{q}$
to denote the $L^{q}$-norm under the $G$-expectation $\mathbb{E}^{G}.$
Moreover, we will use the same notation $C$ to denote constants only
depending on $d,G,p,$ although they may be different from line to
line.

The following estimates are crucial for the proof of the main result
of this section. 
\begin{lem}
\label{estimates of 1 and 2 level}Let $m,n\geqslant1,$ and $k=1,2,\cdots,2^{n}.$
Then

(1) 
\[
\|B_{t_{k-1}^{n},t_{k}^{n}}^{m,j}\|_{\frac{p}{j}}\leqslant\begin{cases}
C(\frac{1}{2^{\frac{n}{2}}})^{j}, & n\leqslant m;\\
C(\frac{2^{\frac{m}{2}}}{2^{n}})^{j}, & n>m,
\end{cases}
\]
where $j=1,2.$

(2) 
\[
\|B_{t_{k-1}^{n},t_{k}^{n}}^{m+1,1}-B_{t_{k-1}^{n},t_{k}^{n}}^{m,1}\|_{p}\leqslant\begin{cases}
0, & n\leqslant m;\\
C\frac{2^{\frac{m}{2}}}{2^{n}}, & n>m,
\end{cases}
\]
\[
\|B_{t_{k-1}^{n},t_{k}^{n}}^{m+1,2}-B_{t_{k-1}^{n},t_{k}^{n}}^{m,2}\|_{\frac{p}{2}}\leqslant\begin{cases}
C\frac{1}{2^{\frac{m}{2}}2^{\frac{n}{2}}}, & n\leqslant m;\\
C\frac{2^{m}}{2^{2n}}, & n>m.
\end{cases}
\]

Here $\|\cdot\|_{q}$ denotes the $L^{q}$-norm under the $G$-expectation
$\mathbb{E}^{G},$ and $C$ is some positive constant not depending
on $m,n,k.$\end{lem}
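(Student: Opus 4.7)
The plan is to split both estimates into the two regimes $n>m$ and $n\leq m$, corresponding to whether $[t_{k-1}^{n},t_{k}^{n}]$ is shorter or longer than a single linear piece of $B^{m}$. When $n>m$, the interval $[t_{k-1}^{n},t_{k}^{n}]$ lies inside some linear piece $[t_{l-1}^{m},t_{l}^{m}]$ of $B^{m}$, so the level-one increment is
\[
B_{t_{k-1}^{n},t_{k}^{n}}^{m,1}=\frac{2^{m}}{2^{n}}\bigl(B_{t_{l}^{m}}-B_{t_{l-1}^{m}}\bigr),
\]
and the iterated integral of a straight line gives $B_{t_{k-1}^{n},t_{k}^{n}}^{m,2}=\tfrac{1}{2}\bigl(B_{t_{k-1}^{n},t_{k}^{n}}^{m,1}\bigr)^{\otimes2}$; both bounds in part (1) then follow from the moment inequality (\ref{moment inequality}). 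When $n\leq m$ the $n$-th partition points are also $m$-th partition points, so $B_{t_{k}^{n}}^{m}=B_{t_{k}^{n}}$ and $B_{t_{k-1}^{n},t_{k}^{n}}^{m,1}=B_{t_{k}^{n}}-B_{t_{k-1}^{n}}$, whose $L^{p}$-bound is again a direct consequence of (\ref{moment inequality}).

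The delicate part is the level-two estimate in the regime $n\leq m$. I would decompose $[t_{k-1}^{n},t_{k}^{n}]$ into its $2^{m-n}$ dyadic refinements $[t_{l-1}^{m},t_{l}^{m}]$ and use the closed form
\[
B_{t_{k-1}^{n},t_{k}^{n}}^{m,2}=\tfrac{1}{2}\sum_{l}\Delta_{l}\otimes\Delta_{l}+\sum_{l}\bigl(B_{t_{l-1}^{m}}-B_{t_{k-1}^{n}}\bigr)\otimes\Delta_{l},\qquad\Delta_{l}:=B_{t_{l}^{m}}-B_{t_{l-1}^{m}},
\]
valid for any piecewise linear path. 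The diagonal sum has $L^{p/2}$-norm at most $2^{m-n}\cdot C/2^{m}=C/2^{n}$ by the triangle inequality and (\ref{moment inequality}). The off-diagonal sum is a discrete $G$-It\^{o} integral: viewing the step process $\eta_{r}:=B_{t_{l-1}^{m}}-B_{t_{k-1}^{n}}$ on each $[t_{l-1}^{m},t_{l}^{m})$ componentwise and applying the $G$-It\^{o} isometry (Proposition on $G$-It\^{o} integral) followed by a Fubini-type inequality, one obtains
\[
\mathbb{E}^{G}\Bigl[\Bigl|\sum_{l}\bigl(B_{t_{l-1}^{m}}-B_{t_{k-1}^{n}}\bigr)\otimes\Delta_{l}\Bigr|^{2}\Bigr]\leq C\int_{t_{k-1}^{n}}^{t_{k}^{n}}\mathbb{E}^{G}\bigl[|B_{r}-B_{t_{k-1}^{n}}|^{2}\bigr]\, dr\leq C/2^{2n}.
\]
Since $p/2<2$, H\"{o}lder's inequality downgrades this $L^{2}$-bound to the required $L^{p/2}$-bound.

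For part (2) the same case analysis applies, now exploiting cancellation between $B^{m}$ and $B^{m+1}$. When $n\leq m$, both approximations agree with $B$ at $t_{k}^{n}$, so the level-one difference vanishes identically; the level-two difference decomposes piece-by-piece on the $m$-th mesh into contributions involving the ``tent'' correction $\eta_{l}:=(B_{t_{l-1/2}^{m}}-B_{t_{l-1}^{m}})-(B_{t_{l}^{m}}-B_{t_{l-1/2}^{m}})$, whose $L^{p}$-norm is of order $1/2^{m/2}$, and another $G$-It\^{o}-isometry argument applied to the resulting discrete sum yields the required $C/(2^{m/2}2^{n/2})$. When $n>m$, a case split on whether the midpoint $(t_{l-1}^{m}+t_{l}^{m})/2$ lies inside $[t_{k-1}^{n},t_{k}^{n}]$ reduces both levels to explicit expressions in $\eta_{l}$ scaled by $2^{m}/2^{n}$, whence (\ref{moment inequality}) yields the claimed bounds. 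The principal technical obstacle is precisely the off-diagonal estimate in the regime $n\leq m$: the triangle inequality applied to $\sum_{l}(B_{t_{l-1}^{m}}-B_{t_{k-1}^{n}})\otimes\Delta_{l}$ yields only $C\cdot 2^{m/2}/2^{3n/2}$, which is off by a factor $2^{(m-n)/2}$, so the sharp $1/2^{n}$ rate requires the martingale-type cancellation encoded in the $G$-It\^{o} isometry rather than any pointwise bound, and this is the single step that must be adapted carefully from the classical Brownian setting to the $G$-expectation framework.
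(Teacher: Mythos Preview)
Your proposal is correct and largely matches the paper for level one and for level two when $n>m$. The genuine difference is in the level-two bound for $B^{m,2}$ when $n\leq m$: you split directly into diagonal and off-diagonal parts and handle the off-diagonal sum via the $G$-It\^o isometry, whereas the paper first proves the \emph{difference} estimate $\|B^{l,2}-B^{l-1,2}\|_2\leq C/(2^{l/2}2^{n/2})$ and then telescopes $B^{m,2}=\sum_{l=n+1}^m(B^{l,2}-B^{l-1,2})+B^{n,2}$, summing the resulting geometric series. Your route is more self-contained; the paper's has the virtue of reusing the difference bound that must be proved anyway for part (2). For the level-two difference itself in the regime $n\leq m$, both arguments rest on the same martingale cancellation: the paper computes the difference explicitly as the antisymmetric sum $\tfrac12\sum_l(\Delta_{2l-1}^{m+1}B\otimes\Delta_{2l}^{m+1}B-\Delta_{2l}^{m+1}B\otimes\Delta_{2l-1}^{m+1}B)$, expands the square, and kills the cross-terms $l\neq r$ by hand using $\mathbb{E}^G[\Delta B\,|\,\Omega_\cdot]=0$, while you package the same orthogonality as an It\^o-isometry bound. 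One caveat on your formulation: the tent variable $\eta_l=\Delta_{2l-1}^{m+1}B-\Delta_{2l}^{m+1}B$ is not $\Omega_{t_{l-1}^m}$-measurable, so $\sum_l\eta_l\otimes\Delta_l^m$ is not literally a $G$-It\^o integral on the $m$-mesh; you must instead work on the $(m{+}1)$-mesh with the adapted integrand $\Delta_{2l-1}^{m+1}B\cdot\mathbf{1}_{[t_{2l-1}^{m+1},t_{2l}^{m+1})}$ (and its $(\alpha,\beta)$-transpose), after which your argument goes through and is equivalent to the paper's.
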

\begin{proof}
(1) The first level. 

If $n\leqslant m$, then
\[
B_{t_{k-1}^{n},t_{k}^{n}}^{m,1}=B_{t_{k}^{n}}-B_{t_{k-1}^{n}}.
\]
It follows from the moment inequality (\ref{moment inequality}) that
\[
\mathbb{E}^{G}[|B_{t_{k-1}^{n},t_{k}^{n}}^{m,1}|^{p}]\leqslant C\frac{1}{2^{\frac{np}{2}}},
\]
and thus 
\[
\|B_{t_{k-1}^{n},t_{k}^{n}}^{m,1}\|_{p}\leqslant C\frac{1}{2^{\frac{n}{2}}}.
\]
Also it is trivial to see that 
\[
B_{t_{k-1}^{n},t_{k}^{n}}^{m+1,1}-B_{t_{k-1}^{n},t_{k}^{n}}^{m,1}=(B_{t_{k}^{n}}-B_{t_{k-1}^{n}})-(B_{t_{k}^{n}}-B_{t_{k-1}^{n}})=0.
\]

If $n>m,$ then by construction we know that 
\[
B_{t_{k-1}^{n},t_{k}^{n}}^{m,1}=\frac{2^{m}}{2^{n}}(B_{t_{l}^{m}}-B_{t_{l-1}^{m}}),
\]
where $l$ is the unique integer such that $[t_{k-1}^{n},t_{k}^{n}]\subset[t_{l-1}^{m},t_{l}^{m}].$
Therefore, 
\[
\|B_{t_{k-1}^{n},t_{k}^{n}}^{m,1}\|_{p}=\frac{2^{m}}{2^{n}}\|B_{t_{l}^{m}}-B_{t_{l-1}^{m}}\|_{p}\leqslant C\frac{2^{\frac{m}{2}}}{2^{n}}.
\]
On the other hand, if $[t_{k-1}^{n},t_{k}^{n}]\subset[t_{2l-2}^{m+1},t_{2l-1}^{m+1}],$
then 
\begin{eqnarray*}
B_{t_{k-1}^{n},t_{k}^{n}}^{m+1,1}-B_{t_{k-1}^{n},t_{k}^{n}}^{m,1} & = & \frac{2^{m+1}}{2^{n}}(B_{t_{2l-1}^{m+1}}-B_{t_{2l-2}^{m+1}})-\frac{2^{m}}{2^{n}}(B_{t_{l}^{m}}-B_{t_{l-1}^{m}})\\
 & = & \frac{2^{m}}{2^{n}}((B_{\frac{2l-1}{2^{m+1}}}-B_{\frac{2l-2}{2^{m+1}}})-(B_{\frac{2l}{2^{m+1}}}-B_{\frac{2l-1}{2^{m+1}}})).
\end{eqnarray*}
It follows that 
\[
\|B_{t_{k-1}^{n},t_{k}^{n}}^{m+1,1}-B_{t_{k-1}^{n},t_{k}^{n}}^{m,1}\|_{p}\leqslant C\frac{2^{\frac{m}{2}}}{2^{n}}.
\]
Similarly, if $[t_{k-1}^{n},t_{k}^{n}]\subset[t_{2l-1}^{m+1},t_{2l}^{m+1}],$
we will obtain the same estimate.

(2) The second level.

Since $\frac{p}{2}<2$, by monotonicity it suffices to establish the
desired estimates under the $L^{2}$-norm. 

First consider the term $B_{t_{k-1}^{n},t_{k}^{n}}^{m+1,2}-B_{t_{k-1}^{n},t_{k}^{n}}^{m,2}.$ 

If $n\leqslant m,$ by the construction of $B_{s,t}^{m,2}$, we have
\begin{eqnarray*}
B_{t_{k-1}^{n},t_{k}^{n}}^{m,2;\alpha,\beta} & = & \int_{t_{k-1}^{n}<u<v<t_{k}^{n}}dB_{u}^{\alpha}dB_{v}^{\beta}\\
 & = & \int_{t_{k-1}^{n}}^{t_{k}^{n}}B_{t_{k-1}^{n},v}^{m,1;\alpha}dB_{v}^{m,1;\beta}\\
 & = & \sum_{l=2^{(m-n)}(k-1)+1}^{2^{m-n}k}\frac{\Delta_{l}^{m}B^{\beta}}{\Delta t^{m}}\int_{t_{l-1}^{m}}^{t_{l}^{m}}(\frac{v-t_{l-1}^{m}}{\Delta t^{m}}B_{t_{l}^{m}}^{\alpha}+\frac{t_{l}^{m}-v}{\Delta t^{m}}B_{t_{l-1}^{m}}^{\alpha}-B_{t_{k-1}^{n}}^{\alpha})dv\\
 & = & \sum_{l=2^{(m-n)}(k-1)+1}^{2^{m-n}k}(\frac{B_{t_{l-1}^{m}}^{\alpha}+B_{t_{l}^{m}}^{\alpha}}{2}-B_{t_{k-1}^{n}}^{\alpha})\Delta_{l}^{m}B^{\beta}.
\end{eqnarray*}
Therefore, 
\begin{eqnarray*}
 &  & B_{t_{k-1}^{n},t_{k}^{n}}^{m+1,2;\alpha,\beta}-B_{t_{k-1}^{n},t_{k}^{n}}^{m,2;\alpha,\beta}\\
 & = & \sum_{l=2^{(m+1-n)}(k-1)+1}^{2^{m+1-n}k}(\frac{B_{t_{l-1}^{m+1}}^{\alpha}+B_{t_{l}^{m+1}}^{\alpha}}{2}-B_{t_{k-1}^{n}}^{\alpha})\Delta_{l}^{m}B^{\beta}\\
 &  & -\sum_{l=2^{(m-n)}(k-1)+1}^{2^{m-n}k}(\frac{B_{t_{l-1}^{m}}^{\alpha}+B_{t_{l}^{m}}^{\alpha}}{2}-B_{t_{k-1}^{n}}^{\alpha})\Delta_{l}^{m}B^{\beta}
\end{eqnarray*}
\begin{eqnarray*}
 & = & \sum_{l=2^{(m-n)}(k-1)+1}^{2^{(m-n)}k}((\frac{B_{t_{2l-2}^{m+1}}^{\alpha}+B_{t_{2l-1}^{m+1}}^{\alpha}}{2}-B_{t_{k-1}^{n}}^{\alpha})\Delta_{2l-1}^{m+1}B^{\beta}+(\frac{B_{t_{2l-1}^{m+1}}^{\alpha}+B_{t_{2l}^{m+1}}^{\alpha}}{2}-B_{t_{k-1}^{n}}^{\alpha})\Delta_{2l}^{m+1}B^{\beta}\\
 &  & -(\frac{B_{t_{2l-2}^{m+1}}^{\alpha}+B_{t_{2l}^{m+1}}^{\alpha}}{2}-B_{t_{k-1}^{n}}^{\alpha})(\Delta_{2l-1}^{m+1}B^{\beta}+\Delta_{2l}^{m+1}B^{\beta}))\\
 & = & \frac{1}{2}\sum_{l=2^{m-n}(k-1)+1}^{2^{m-n}k}(\Delta_{2l-1}^{m+1}B^{\alpha}\Delta_{2l}^{m+1}B^{\beta}-\Delta_{2l}^{m+1}B^{\alpha}\Delta_{2l-1}^{m+1}B^{\beta}).
\end{eqnarray*}
By using the notation of tensor products, we have 
\[
B_{t_{k-1}^{n},t_{k}^{n}}^{m+1,2}-B_{t_{k-1}^{n},t_{k}^{n}}^{m,2}=\frac{1}{2}\sum_{l=2^{m-n}(k-1)+1}^{2^{m-n}k}(\Delta_{2l-1}^{m+1}B\otimes\Delta_{2l}^{m+1}B-\Delta_{2l}^{m+1}B\otimes\Delta_{2l-1}^{m+1}B).
\]
It follows that 
\begin{eqnarray*}
 &  & \mathbb{E}^{G}[|B_{t_{k-1}^{n},t_{k}^{n}}^{m+1,2}-B_{t_{k-1}^{n},t_{k}^{n}}^{m,2}|^{2}]\\
 & = & \frac{1}{4}\mathbb{E}^{G}[|\sum_{l=2^{m-n}(k-1)+1}^{2^{m-n}k}(\Delta_{2l-1}^{m+1}B\otimes\Delta_{2l}^{m+1}B-\Delta_{2l}^{m+1}B\otimes\Delta_{2l-1}^{m+1}B)|^{2}]\\
 & \leqslant & C\sum_{\substack{\alpha\neq\beta\\
\alpha,\beta=1,\cdots,d
}
}\mathbb{E}^{G}[|\sum_{l}(\Delta_{2l-1}^{m+1}B^{\alpha}\Delta_{2l}^{m+1}B^{\beta}-\Delta_{2l}^{m+1}B^{\alpha}\Delta_{2l-1}^{m+1}B^{\beta})|^{2}]\\
 & \leqslant & C\sum_{\alpha\neq\beta}\sum_{l,r}\mathbb{E}^{G}[(\Delta_{2l-1}^{m+1}B^{\alpha}\Delta_{2l}^{m+1}B^{\beta}-\Delta_{2l}^{m+1}B^{\alpha}\Delta_{2l-1}^{m+1}B^{\beta})\\
 &  & \cdot(\Delta_{2r-1}^{m+1}B^{\alpha}\Delta_{2r}^{m+1}B^{\beta}-\Delta_{2r}^{m+1}B^{\alpha}\Delta_{2r-1}^{m+1}B^{\beta})]\\
 & \leqslant & C\sum_{\alpha\neq\beta}\sum_{l,r}(\mathbb{E}^{G}[\Delta_{2l-1}^{m+1}B^{\alpha}\Delta_{2r-1}^{m+1}B^{\alpha}\Delta_{2l}^{m+1}B^{\beta}\Delta_{2r}^{m+1}B^{\beta}]\\
 &  & +\mathbb{E}^{G}[\Delta_{2l}^{m+1}B^{\alpha}\Delta_{2r}^{m+1}B^{\alpha}\Delta_{2l-1}^{m+1}B^{\beta}\Delta_{2r-1}^{m+1}B^{\beta}]+\mathbb{E}^{G}[-\Delta_{2l-1}^{m+1}B^{\alpha}\Delta_{2r}^{m+1}B^{\alpha}\Delta_{2r-1}^{m+1}B^{\beta}\Delta_{2l}^{m+1}B^{\beta}]\\
 &  & +\mathbb{E}^{G}[-\Delta_{2r-1}^{m+1}B^{\alpha}\Delta_{2l}^{m+1}B^{\alpha}\Delta_{2l-1}^{m+1}B^{\beta}\Delta_{2r}^{m+1}B^{\beta}]),
\end{eqnarray*}
where the summation over $l$ and $r$ is taken from $2^{m-n}(k-1)+1$
to $2^{m-n}k.$ Here we have used the sublinearity of $\mathbb{E}$.
Now we study every term separately. If $l<r,$ by the properties of
conditional $G$-expectation and the distribution of $B_{t}$, we
have 
\begin{eqnarray*}
 &  & \mathbb{E}^{G}[\Delta_{2l-1}^{m+1}B^{\alpha}\Delta_{2l}^{m+1}B^{\beta}\Delta_{2r-1}^{m+1}B^{\alpha}\Delta_{2r}^{m+1}B^{\beta}]\\
 & = & \mathbb{E}^{G}[\mathbb{E}[\Delta_{2l-1}^{m+1}B^{\alpha}\Delta_{2l}^{m+1}B^{\beta}\Delta_{2r-1}^{m+1}B^{\alpha}\Delta_{2r}^{m+1}B^{\beta}|\Omega_{t_{2r-1}^{m+1}}]]\\
 & = & \mathbb{E}^{G}[\eta^{+}\mathbb{E}[\Delta_{2r}^{m+1}B^{\beta}|\Omega_{t_{2r-1}^{m+1}}]+\eta^{-}\mathbb{E}[-\Delta_{2r}^{m+1}B^{\beta}|\Omega_{t_{2r-1}^{m+1}}]]\\
 & = & 0,
\end{eqnarray*}
where $\eta=\Delta_{2l-1}^{m+1}B^{\alpha}\Delta_{2l}^{m+1}B^{\beta}\Delta_{2r-1}^{m+1}B^{\alpha}.$
Similarly, we can prove that for any $l\neq r,$ 
\begin{eqnarray*}
\mathbb{E}^{G}[\Delta_{2l-1}^{m+1}B^{\alpha}\Delta_{2r-1}^{m+1}B^{\beta}\Delta_{2l}^{m+1}B^{\beta}\Delta_{2r}^{m+1}B^{\beta}] & = & \mathbb{E}^{G}[(\Delta_{2l}^{m+1}B^{\alpha}\Delta_{2r}^{m+1}B^{\alpha}\Delta_{2l-1}^{m+1}B^{\beta}\Delta_{2r-1}^{m+1}B^{\beta})]\\
 & = & \mathbb{E}^{G}[(-\Delta_{2l-1}^{m+1}B^{\alpha}\Delta_{2r}^{m+1}B^{\alpha}\Delta_{2r-1}^{m+1}B^{\beta}\Delta_{2l}^{m+1}B^{\beta})]\\
 & = & \mathbb{E}^{G}[(-\Delta_{2r-1}^{m+1}B^{\alpha}\Delta_{2l}^{m+1}B^{\alpha}\Delta_{2l-1}^{m+1}B^{\beta}\Delta_{2r}^{m+1}B^{\beta})]\\
 & = & 0.
\end{eqnarray*}
On the other hand, if $l=r,$ it is straight forward that 
\begin{eqnarray*}
\mathbb{E}^{G}[(\Delta_{2l-1}^{m+1}B^{\alpha})^{2}(\Delta_{2l}^{m+1}B^{\beta})^{2}] & \leqslant & \frac{1}{2}(\mathbb{E}^{G}[(\Delta_{2l-1}^{m+1}B^{\alpha})^{4}]+\mathbb{E}^{G}[(\Delta_{2l}^{m+1}B^{\beta})^{4}])\leqslant C\frac{1}{2^{2m}},
\end{eqnarray*}
and similarly, 
\begin{eqnarray*}
\mathbb{E}^{G}(-\Delta_{2l-1}^{m+1}B^{\alpha}\Delta_{2l-1}^{m+1}B^{\beta}\Delta_{2l}^{m+1}B^{\alpha}\Delta_{2l}^{m+1}B^{\beta}) & \leqslant & \frac{1}{4}(\mathbb{E}^{G}[(\Delta_{2l-1}^{m+1}B^{\alpha})^{4}]+\mathbb{E}^{G}[(\Delta_{2l-1}^{m+1}B^{\beta})^{4}]\\
 &  & +\mathbb{E}^{G}[(\Delta_{2l}^{m+1}B^{\alpha})^{4}]+\mathbb{E}^{G}[(\Delta_{2l}^{m+1}B^{\beta})^{4}])\\
 & \leqslant & C\frac{1}{2^{2m}}.
\end{eqnarray*}
Combining all the estimates above, we arrive at
\begin{eqnarray*}
\mathbb{E}^{G}[|B_{t_{k-1}^{n},t_{k}^{n}}^{m+1,2}-B_{t_{k-1}^{n},t_{k}^{n}}^{m,2}|^{2}] & \leqslant & C\sum_{\alpha\neq\beta}\sum_{l=2^{m-n}(k-1)+1}^{2^{m-n}k}\frac{1}{2^{2m}}\leqslant C\frac{1}{2^{m}2^{n}},
\end{eqnarray*}
and hence 
\[
\|B_{t_{k-1}^{n},t_{k}^{n}}^{m+1,2}-B_{t_{k-1}^{n},t_{k}^{n}}^{m,2}\|_{2}\leqslant C\frac{1}{2^{\frac{m}{2}}2^{\frac{n}{2}}}.
\]

If $n>m,$ by construction we have 
\begin{eqnarray*}
B_{t_{k-1}^{n},t_{k}^{n}}^{m,2;\alpha,\beta} & = & \int_{t_{k-1}^{n}<u<v<t_{k}^{n}}d(B^{m})_{u}^{\alpha}d(B^{m})_{v}^{\beta}\\
 & = & \int_{t_{k-1}^{n}}^{t_{k}^{n}}B_{t_{k-1}^{n},v}^{m,1;\alpha}d(B^{m})_{v}^{\beta}\\
 & = & \frac{\Delta_{l}^{m}B^{\alpha}\Delta_{l}^{m}B^{\beta}}{(\Delta t^{m})^{2}}\int_{t_{k-1}^{n}}^{t_{k}^{n}}(v-t_{k-1}^{n})dv\\
 & = & \frac{1}{2}2^{2(m-n)}\Delta_{l}^{m}B^{\alpha}\Delta_{l}^{m}B^{\beta},
\end{eqnarray*}
where $l$ is the unique integer such that $[t_{k-1}^{n},t_{k}^{n}]\subset[t_{l-1}^{m},t_{l}^{m}]$.
In other words, we have

\[
B_{t_{k-1}^{n},t_{k}^{n}}^{m,2}=\frac{1}{2}2^{2(m-n)}(\Delta_{l}^{m}B)^{\otimes2},
\]
 It follows that
\[
B_{t_{k-1}^{n},t_{k}^{n}}^{m+1,2}-B_{t_{k-1}^{n},t_{k}^{n}}^{m,2}=\begin{cases}
2^{2(m-n)+1}(\Delta_{2l-1}^{m+1}B)^{\otimes2}-2^{2(m-n)-1}(\Delta_{l}^{m}B)^{\otimes2}, & [t_{k-1}^{n},t_{k}^{n}]\subset[t_{2l-2}^{m+1},t_{2l-1}^{m+1}];\\
2^{2(m-n)+1}(\Delta_{2l}^{m+1}B)^{\otimes2}-2^{2(m-n)-1}(\Delta_{l}^{m}B)^{\otimes2}, & [t_{k-1}^{n},t_{k}^{n}]\subset[t_{2l-1}^{m+1},t_{2l}^{m+1}].
\end{cases}
\]
By using the Minkowski inequality, the Cauchy-Schwarz inequality and
the sublinearity of $\mathbb{E},$ it is easy to obtain that 
\[
\|B_{t_{k-1}^{n},t_{k}^{n}}^{m+1,2}-B_{t_{k-1}^{n},t_{k}^{n}}^{m,2}\|_{2}\leqslant C\frac{2^{m}}{2^{2n}}.
\]

Now consider the term $B_{t_{k-1}^{n},t_{k}^{n}}^{m,2}.$ 

If $n\geqslant m,$ by using
\[
B_{t_{k-1}^{n},t_{k}^{n}}^{m,2}=2^{2(m-n)-1}(\Delta_{l}^{m}B)^{\otimes2},
\]
we can proceed in the same way as before to obtain that 
\[
\|B_{t_{k-1}^{n},t_{k}^{n}}^{m,2}\|_{2}\leqslant C\frac{2^{m}}{2^{2n}}.
\]

If $n<m,$ then
\[
B_{t_{k-1}^{n},t_{k}^{n}}^{m,2}=\sum_{l=n+1}^{m}(B_{t_{k-1}^{n},t_{k}^{n}}^{l,2}-B_{t_{k-1}^{n},t_{k}^{n}}^{l-1,2})+B_{t_{k-1}^{n},t_{k}^{n}}^{n,2}.
\]
It follows that 
\begin{eqnarray*}
\|B_{t_{k-1}^{n},t_{k}^{n}}^{m,2}\|_{2} & \leqslant & \sum_{l=n+1}^{m}\|B_{t_{k-1}^{n},t_{k}^{n}}^{l,2}-B_{t_{k-1}^{n},t_{k}^{n}}^{l-1,2}\|_{2}+\|B_{t_{k-1}^{n},t_{k}^{n}}^{n,2}\|_{2}\\
 & \leqslant & C(\frac{1}{2^{\frac{n}{2}}}\sum_{l=n+1}^{\infty}\frac{1}{2^{\frac{l}{2}}}+\frac{1}{2^{n}})\\
 & \leqslant & C\frac{1}{2^{n}}.
\end{eqnarray*}

Now the proof is complete.
\end{proof}

In order to study the behavior of $\boldsymbol{B}^{m}$ in the space
$G\Omega_{p}(\mathbb{R}^{d})$, we may need to control the $p$-variation
distance $d_{p}$ in a suitable way. For $\boldsymbol{w},\widetilde{\boldsymbol{w}}\in G\Omega(\mathbb{R}^{d}),$
define 
\begin{equation}
\rho_{j}(\boldsymbol{w},\widetilde{\boldsymbol{w}}):=(\sum_{n=1}^{\infty}n^{\gamma}\sum_{k=1}^{2^{n}}|w_{t_{k-1}^{n},t_{k}^{n}}^{j}-\widetilde{w}_{t_{k-1}^{n},t_{k}^{n}}^{j}|^{\frac{p}{j}})^{\frac{j}{p}},\ j=1,2,\label{def rho}
\end{equation}
where $\gamma>p-1$ is some fixed universal constant. The functional
$\rho_{j}$ was initially introduced by Hambly and Lyons \cite{hambly1998stochastic}
to construct the stochastic area process associated with the Brownian
motion on the Sierpinski gasket. We use $\rho_{j}(\boldsymbol{w})$
to denote $\rho_{j}(\boldsymbol{w},\widetilde{\boldsymbol{w}})$ with
$\widetilde{\boldsymbol{w}}=(1,0,0).$

The following result, which is important for us, is proved in \cite{lyons2002system}. 
\begin{prop}
\label{p-var control}There exists some positive constant $R=R(p,\gamma),$
such that for any $\boldsymbol{w},\widetilde{\boldsymbol{w}}\in G\Omega(\mathbb{R}^{d})$,
\[
d_{p}(\boldsymbol{w},\widetilde{\boldsymbol{w}})\leqslant R\mbox{max\ensuremath{\{\rho_{1}(\boldsymbol{w},\widetilde{\boldsymbol{w}}),\rho_{1}(\boldsymbol{w},\widetilde{\boldsymbol{w}})(\rho_{1}(\boldsymbol{w})+\rho_{1}(\widetilde{\boldsymbol{w}})),\rho_{2}(\boldsymbol{w},\widetilde{\boldsymbol{w}})\}.}}
\]

\end{prop}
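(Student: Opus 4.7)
The plan is to control the sup-over-partitions defining $d_{p}$ by sums over dyadic partitions, where $\rho_{1}$ and $\rho_{2}$ directly apply. The reduction rests on a canonical dyadic decomposition: any interval $[s,t]\subset[0,1]$ with dyadic-rational endpoints (the general case follows by continuity of $\boldsymbol{w},\widetilde{\boldsymbol{w}}$) can be written as a disjoint union of finitely many dyadic intervals $J_{n,k}=[t_{k-1}^{n},t_{k}^{n}]$, with at most two intervals occurring at each level $n$; moreover, the dyadic intervals used for disjoint parent intervals of a partition $D=\{s_{l}\}$ of $[0,1]$ are themselves pairwise disjoint.

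For the first level, additivity of $w^{1}$ yields $w_{s_{l-1},s_{l}}^{1}-\widetilde{w}_{s_{l-1},s_{l}}^{1}=\sum_{n}\sum_{J_{n,k}\subset[s_{l-1},s_{l}]}(w_{J_{n,k}}^{1}-\widetilde{w}_{J_{n,k}}^{1})$. Applying H\"older's inequality with weights $n^{-\gamma/(p-1)}$ and $n^{\gamma}$ (the first series being summable precisely because $\gamma>p-1$), together with the bound of two intervals per level, gives
\[
|w_{s_{l-1},s_{l}}^{1}-\widetilde{w}_{s_{l-1},s_{l}}^{1}|^{p}\leqslant C\sum_{n}n^{\gamma}\sum_{J_{n,k}\subset[s_{l-1},s_{l}]}|w_{J_{n,k}}^{1}-\widetilde{w}_{J_{n,k}}^{1}|^{p}.
\]
Summing over $l$ and exploiting the disjointness of dyadic pieces across $D$, the right-hand side is bounded by $C\rho_{1}(\boldsymbol{w},\widetilde{\boldsymbol{w}})^{p}$, hence $\sup_{D}\sum_{l}|w_{s_{l-1},s_{l}}^{1}-\widetilde{w}_{s_{l-1},s_{l}}^{1}|^{p}\leqslant C\rho_{1}(\boldsymbol{w},\widetilde{\boldsymbol{w}})^{p}$.

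For the second level, iterate Chen's identity on the dyadic decomposition $[s_{l-1},s_{l}]=J_{1}\cup\cdots\cup J_{m}$ to obtain $w_{s_{l-1},s_{l}}^{2}=\sum_{i}w_{J_{i}}^{2}+\sum_{i<j}w_{J_{i}}^{1}\otimes w_{J_{j}}^{1}$, and likewise for $\widetilde{\boldsymbol{w}}$. The difference splits via the identity
\[
w_{J_{i}}^{1}\otimes w_{J_{j}}^{1}-\widetilde{w}_{J_{i}}^{1}\otimes\widetilde{w}_{J_{j}}^{1}=(w_{J_{i}}^{1}-\widetilde{w}_{J_{i}}^{1})\otimes w_{J_{j}}^{1}+\widetilde{w}_{J_{i}}^{1}\otimes(w_{J_{j}}^{1}-\widetilde{w}_{J_{j}}^{1})
\]
into a pure second-level defect, controlled by $\rho_{2}(\boldsymbol{w},\widetilde{\boldsymbol{w}})^{p/2}$ after H\"older at exponent $p/2$ with the same weights $n^{\gamma}$, and two mixed first-level cross-sums, which by Cauchy--Schwarz and a bilinear analogue of the same weighted H\"older argument are controlled by $\rho_{1}(\boldsymbol{w},\widetilde{\boldsymbol{w}})^{p/2}(\rho_{1}(\boldsymbol{w})+\rho_{1}(\widetilde{\boldsymbol{w}}))^{p/2}$. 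Raising to the power $2/p$, taking $\sup_{D}$, and combining with the first-level bound via the outer $\max$ in the definition of $d_{p}$ yields the stated inequality with $R=R(p,\gamma)$ depending only on $\sum_{n}n^{-\gamma/(p-1)}$ and universal combinatorial factors.

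The main obstacle is the second-level bookkeeping: one must verify that the double sum of cross terms over dyadic level pairs $(n_{1},n_{2})$ reassembles into a product of two single-level sums, and that a single weight exponent $\gamma>p-1$ simultaneously controls the scalar $L^{p}$ H\"older estimate at the first level and the bilinear $L^{p/2}$ estimate at the second. Once this combinatorial-analytic step is secured, extension from dyadic-rational $(s,t)$ to arbitrary $(s,t)\in\Delta$ follows from continuity of $\boldsymbol{w}$ and $\widetilde{\boldsymbol{w}}$ as multiplicative functionals.
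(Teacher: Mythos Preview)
The paper does not actually prove this proposition; it simply cites \cite{lyons2002system} (Lyons--Qian) for the result. Your sketch follows precisely the argument given there: the canonical dyadic decomposition of an arbitrary interval with at most two dyadic blocks per scale, the weighted H\"older inequality at exponent $p$ (whose weight series converges exactly because $\gamma>p-1$) for the first level, and the Chen-identity expansion with the bilinear telescoping $a\otimes b-\tilde a\otimes\tilde b=(a-\tilde a)\otimes b+\tilde a\otimes(b-\tilde b)$ for the second level. You have also correctly located the only genuinely delicate point, namely reassembling the double cross-sum into a product of two single $\rho_{1}$-type sums via Cauchy--Schwarz over the partition; this is handled in the reference exactly as you indicate, so there is no gap.
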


Now let 
\begin{equation}
I(\boldsymbol{w},\widetilde{\boldsymbol{w}}):=\mbox{max}\ensuremath{\{\rho_{1}(\boldsymbol{w},\widetilde{\boldsymbol{w}}),\rho_{1}(\boldsymbol{w},\widetilde{\boldsymbol{w}})(\rho_{1}(\boldsymbol{w})+\rho_{1}(\widetilde{\boldsymbol{w}})),\rho_{2}(\boldsymbol{w},\widetilde{\boldsymbol{w}})\},}\label{rho}
\end{equation}
and observe that
\begin{eqnarray}
\{\omega:\ \boldsymbol{B}^{m}\mbox{ is not Cauchy under \ensuremath{d_{p}}}\} & \subset & \{\omega:\ \sum_{m=1}^{\infty}d_{p}(\boldsymbol{B}^{m},\boldsymbol{B}^{m+1})=\infty\}\nonumber \\
 & \subset & \limsup_{m\rightarrow\infty}\{\omega:\ d_{p}(\boldsymbol{B}^{m},\boldsymbol{B}^{m+1})>\frac{R}{2^{m\beta}}\}\nonumber \\
 & \subset & \limsup_{m\rightarrow\infty}\{\omega:\ I(\boldsymbol{B}^{m},\boldsymbol{B}^{m+1})>\frac{1}{2^{m\beta}}\}.\label{not Cauchy}
\end{eqnarray}
where $\beta$ is some positive constant to be chosen. Notice that
the R.H.S. of (\ref{not Cauchy}) is $\mathcal{B}(\Omega)$-measurable
so its capacity is well-defined. Therefore, in order to prove that
for quasi-surely, $\boldsymbol{B}^{m}$ is a Cauchy sequence under
$d_{p},$ it suffices to show that the R.H.S. of (\ref{not Cauchy})
has capacity zero. This can be shown by using the Borel-Cantelli lemma.

According to (\ref{rho}), we may first need to establish estimates
for 
\[
c(\rho_{j}(\boldsymbol{B}^{m},\boldsymbol{B}^{m+1})>\lambda),\ j=1,2,
\]
and 
\[
c(\rho_{1}(\boldsymbol{B}^{m})>\lambda),
\]
where $m\geqslant1$ and $\lambda>0.$ They are contained in the following
lemma.
\begin{lem}
\label{estimate of rho}For $m\geqslant1,$ $\lambda>0$, we have
the following estimates.

(1) 
\[
c(\rho_{1}(\boldsymbol{B}^{m})>\lambda)\leqslant C\lambda^{-p}.
\]

(2) Let \textup{$\theta\in(0,\frac{p}{2}-1)$ be some constant such
that 
\[
n^{\gamma+1}\leqslant C\frac{2^{n(p-1)}}{2^{n(p-\theta-1)}},\ \forall n\geqslant1.
\]
Then we have
\[
c(\rho_{j}(\boldsymbol{B}^{m},\boldsymbol{B}^{m+1})>\lambda)\leqslant C\lambda^{-\frac{p}{j}}\frac{1}{2^{m(\frac{p}{2}-\theta-1)}},\ j=1,2.
\]
}\end{lem}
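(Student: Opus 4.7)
The plan is to apply the Markov inequality in the form $c(\rho_j > \lambda) \leq \lambda^{-p/j}\mathbb{E}^G[\rho_j^{p/j}]$, observe that by definition $\rho_j^{p/j}$ is a linear combination with nonnegative coefficients of the single-block quantities $|B^{m,j}_{t_{k-1}^n,t_k^n}|^{p/j}$ (respectively of the differences $|B^{m+1,j}_{t_{k-1}^n,t_k^n}-B^{m,j}_{t_{k-1}^n,t_k^n}|^{p/j}$), and then invoke sublinearity of $\mathbb{E}^G$ to bound the $G$-expectation of the sum by the sum of $G$-expectations of individual terms. Each such term is exactly the $(p/j)$-th power of the $L^{p/j}$-norm controlled by Lemma \ref{estimates of 1 and 2 level}, so everything reduces to summing two geometric-type series, one over the range $n \leq m$ and one over $n > m$.

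For part (1), this reduction yields
\[
\mathbb{E}^G[\rho_1(\boldsymbol{B}^m)^p] \leq \sum_{n=1}^\infty n^\gamma \sum_{k=1}^{2^n} \|B^{m,1}_{t_{k-1}^n, t_k^n}\|_p^p.
\]
Splitting the outer sum at $n = m$ and plugging in Lemma \ref{estimates of 1 and 2 level}(1), the $n \leq m$ contribution is bounded by a constant multiple of $\sum_n n^\gamma 2^{n(1 - p/2)}$, which converges since $p > 2$, while the $n > m$ contribution is $C\cdot 2^{mp/2}\sum_{n > m} n^\gamma 2^{n(1 - p)}$, which even decays in $m$ because $p > 2$. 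Together with Markov this produces the stated bound $C\lambda^{-p}$, uniformly in $m$.

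For part (2), I follow the same recipe with the difference bounds from Lemma \ref{estimates of 1 and 2 level}(2). The $j=1$ case is particularly clean: the difference vanishes when $n \leq m$, so the only contribution is the tail $C\cdot 2^{mp/2}\sum_{n > m} n^\gamma 2^{n(1-p)}$. The $j=2$ case splits into two non-trivial pieces: the $n \leq m$ piece contributes $C\cdot 2^{-mp/4}\sum_{n \leq m} n^\gamma 2^{n(1 - p/4)}$, whose inner sum grows like $m^\gamma 2^{m(1-p/4)}$ since $p < 3$ forces $1 - p/4 > 0$, while the $n > m$ piece is handled exactly as in the $j=1$ case. In every subcase the hypothesis $n^{\gamma+1}\leq C\cdot 2^{n\theta}$ is invoked in the form $n^\gamma \leq C\cdot 2^{n\theta}/n$ to absorb the polynomial weight, and the remaining geometric series are estimated by their boundary terms to produce the final exponent $-(p/2 - \theta - 1)$ in $m$.

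The main bookkeeping obstacle, and the point at which the constraint $\theta < p/2 - 1$ becomes essential, is the $j=2$, $n \leq m$ contribution: one must verify that the growth of the partial sum (driven by $1 - p/4 > 0$) combined with the prefactor $2^{-mp/4}$ and with the $2^{m\theta}$ produced by absorbing $m^\gamma$ still yields net decay in $m$. This reduces to checking that $\theta + 1 - p/2 < 0$, which is exactly $\theta < p/2 - 1$. Once this sign is secured, the other three subcases are routine variants of the same geometric-series estimate, and assembling them gives the bound $C\lambda^{-p/j}\cdot 2^{-m(p/2-\theta-1)}$.
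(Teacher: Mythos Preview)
Your proposal is correct and follows essentially the same approach as the paper: Chebyshev/Markov for the capacity, sublinearity of $\mathbb{E}^G$ to pass to the sum of individual $L^{p/j}$-norms, then Lemma~\ref{estimates of 1 and 2 level} and the split at $n=m$. One small technical point the paper handles explicitly and you gloss over: since $\rho_j(\boldsymbol{B}^m)^{p/j}$ is an \emph{infinite} sum, you cannot directly invoke the Markov inequality for $\mathbb{E}^G$ (Theorem~2.12 requires the random variable to lie in $L_G^p$); the paper instead applies Chebyshev to the finite truncations $A_N$ and then uses $c(A)=\lim_N c(A_N)$, which is the clean way to justify your inequality.
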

\begin{proof}
First consider 
\[
c(\rho_{1}(\boldsymbol{B}^{m})>\lambda)=c(\sum_{n=1}^{\infty}n^{\gamma}\sum_{k=1}^{2^{n}}|B_{t_{k-1}^{n},t_{k}^{n}}^{m,1}|^{p}>\lambda^{p}).
\]
Define 
\[
A_{N}=\{\omega:\ \sum_{n=1}^{N}n^{\gamma}\sum_{k=1}^{2^{n}}|B_{t_{k-1}^{n},t_{k}^{n}}^{m,1}|^{p}>\lambda^{p}\}\in\mathcal{B}(\Omega),
\]
and 
\[
A=\{\omega:\ \sum_{n=1}^{\infty}n^{\gamma}\sum_{k=1}^{2^{n}}|B_{t_{k-1}^{n},t_{k}^{n}}^{m,1}|^{p}>\lambda^{p}\}\in\mathcal{B}(\Omega).
\]
It is obvious that $A_{N}\uparrow A.$ By the properties of the capacity
$c,$ we have 
\[
c(A)=\lim_{N\rightarrow\infty}c(A_{N}).
\]
On the other hand, by the sublinearity of $\mathbb{E}^{G}$, the Chebyshev
inequality for the capacity $c$ and Lemma \ref{moment inequality},
we have 
\begin{eqnarray*}
c(A_{N}) & \leqslant & \lambda^{-p}\sum_{n=1}^{N}n^{\gamma}\sum_{k=1}^{2^{n}}\mathbb{E}[|B_{t_{k-1}^{n},t_{k}^{n}}^{m,1}|^{p}]\\
 & \leqslant & C\lambda^{-p}[\sum_{n=1}^{m}n^{\gamma}2^{n}\frac{1}{2^{\frac{np}{2}}}+\sum_{n=m+1}^{\infty}n^{\gamma}2^{n}\frac{2^{\frac{mp}{2}}}{2^{np}}]\\
 & = & C\lambda^{-p}[\sum_{n=1}^{m}n^{\gamma}\frac{1}{2^{n(\frac{p}{2}-1)}}+2^{\frac{mp}{2}}\sum_{n=m+1}^{\infty}n^{\gamma}\frac{1}{2^{n(p-1)}}]\\
 & \leqslant & C\lambda^{-p}.
\end{eqnarray*}
It follows that
\[
c(\rho_{1}(\boldsymbol{B}^{m})>\lambda)=c(A)\leqslant C\lambda^{-p}.
\]

Now consider 
\[
c(\rho_{1}(\boldsymbol{B}^{m},\boldsymbol{B}^{m+1})>\lambda)=c(\sum_{n=1}^{\infty}n^{\gamma}\sum_{k=1}^{2^{n}}|B_{\frac{k-1}{2^{n}},\frac{k}{2^{n}}}^{(m+1),1}-B_{\frac{k-1}{2^{n}},\frac{k}{2^{n}}}^{(m),1}|^{p}>\lambda^{p}).
\]
By similar reasons we will have 
\begin{eqnarray*}
c(\rho_{1}(\boldsymbol{B}^{m},\boldsymbol{B}^{m+1})>\lambda) & \leqslant & \lambda^{-p}\sum_{n=1}^{\infty}n^{\gamma}\sum_{k=1}^{2^{n}}\mathbb{E}[|B_{t_{k-1}^{n},t_{k}^{n}}^{m+1,1}-B_{t_{k-1}^{n},t_{k}^{n}}^{m,1}|^{p}]\\
 & \leqslant & C\lambda^{-p}(\sum_{n=m+1}^{\infty}n^{\gamma}2^{n}\frac{2^{\frac{mp}{2}}}{2^{np}})\\
 & = & C\lambda^{-p}2^{\frac{mp}{2}}\sum_{n=m+1}^{\infty}n^{\gamma}\frac{1}{2^{n(p-1)}}.
\end{eqnarray*}
Since $\theta\in(0,\frac{p}{2}-1)$ is such that 
\[
n^{\gamma+1}\leqslant C\frac{2^{n(p-1)}}{2^{n(p-\theta-1)}},\ \forall n\geqslant1,
\]
we arrive at 
\[
c(\rho_{1}(\boldsymbol{B}^{m},\boldsymbol{B}^{m+1})>\lambda)\leqslant C\lambda^{-p}\frac{1}{2^{m(\frac{p}{2}-\theta-1)}}.
\]

Finally, consider the second level part. By similar reasons, we have
\begin{eqnarray*}
c(\rho_{2}(\boldsymbol{B}^{m},\boldsymbol{B}^{m+1})>\lambda) & \leqslant & C\lambda^{-\frac{p}{2}}[\sum_{n=1}^{m}n^{\gamma}2^{n}\frac{1}{2^{\frac{mp}{4}}2^{\frac{np}{4}}}+2^{\frac{mp}{2}}\sum_{n=m+1}^{\infty}n^{\gamma}2^{n}\frac{1}{2^{np}}]\\
 & = & C\lambda^{-\frac{p}{2}}[\frac{1}{2^{\frac{mp}{4}}}\sum_{n=1}^{m}n^{\gamma}2^{n(1-\frac{p}{4})}+2^{\frac{mp}{2}}\sum_{n=m+1}^{\infty}n^{\gamma}\frac{1}{2^{n(p-1)}}]\\
 & \leqslant & C\lambda^{-\frac{p}{2}}[\frac{1}{2^{\frac{mp}{4}}}m^{\gamma+1}2^{m(1-\frac{p}{4})}+2^{\frac{mp}{2}}\frac{1}{2^{m(p-\theta-1)}}]\\
 & \leqslant & C\lambda^{-\frac{p}{2}}\frac{1}{2^{m(\frac{p}{2}-\theta-1)}}.
\end{eqnarray*}

\end{proof}

Now we are in position to prove the main result of this section.
\begin{thm}
\label{enhancement}Outside a $\mathcal{B}(\Omega)$-measurable set
of capacity zero, $\boldsymbol{B}^{m}$ is a Cauchy sequence under
the $p$-variation distance $d_{p}.$ In particular, for quasi-surely,
the sample paths of $B_{t}$ can be enhanced to be geometric rough
paths 
\[
\boldsymbol{B}_{s,t}=(1,B_{s,t}^{1},B_{s,t}^{2}),\ 0\leqslant s<t\leqslant1,
\]
with roughness $p,$ which are defined as the limit of sample (geometric
rough) paths of $\boldsymbol{B}^{m}$ in $G\Omega_{p}(\mathbb{R}^{d})$
under the $p$-variation distance $d_{p}.$\end{thm}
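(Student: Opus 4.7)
The plan is to combine the deterministic $p$-variation control from Proposition \ref{p-var control} with the capacity estimates of Lemma \ref{estimate of rho}, then apply Borel--Cantelli under the capacity $c$ to the chain of inclusions already recorded in (\ref{not Cauchy}). Since $(G\Omega_{p}(\mathbb{R}^{d}),d_{p})$ is complete, once $\boldsymbol{B}^{m}$ is shown to be Cauchy outside a set of zero capacity, its limit automatically lies in $G\Omega_{p}(\mathbb{R}^{d})$, yielding the claimed enhancement.

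First I would fix $\theta\in(0,\tfrac{p}{2}-1)$ as in Lemma \ref{estimate of rho}, choose $\beta\in(0,(\tfrac{p}{2}-\theta-1)/p)$, and then pick an intermediate exponent $\alpha$ with $\beta<\alpha<(\tfrac{p}{2}-\theta-1)/p$. By the definition of $I$ in (\ref{rho}), the event $\{I(\boldsymbol{B}^{m},\boldsymbol{B}^{m+1})>2^{-m\beta}\}$ is contained in the union of the three events
\[
E^{(1)}_{m}=\bigl\{\rho_{1}(\boldsymbol{B}^{m},\boldsymbol{B}^{m+1})>2^{-m\beta}\bigr\},\quad E^{(2)}_{m}=\bigl\{\rho_{2}(\boldsymbol{B}^{m},\boldsymbol{B}^{m+1})>2^{-m\beta}\bigr\},
\]
together with the product event
\[
E^{(3)}_{m}=\bigl\{\rho_{1}(\boldsymbol{B}^{m},\boldsymbol{B}^{m+1})(\rho_{1}(\boldsymbol{B}^{m})+\rho_{1}(\boldsymbol{B}^{m+1}))>2^{-m\beta}\bigr\},
\]
which I would split by
\[
E^{(3)}_{m}\subset\bigl\{\rho_{1}(\boldsymbol{B}^{m},\boldsymbol{B}^{m+1})>2^{-m\alpha}\bigr\}\cup\bigl\{\rho_{1}(\boldsymbol{B}^{m})+\rho_{1}(\boldsymbol{B}^{m+1})>2^{m(\alpha-\beta)}\bigr\}.
\]

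Applying Lemma \ref{estimate of rho}(2) with $\lambda=2^{-m\beta}$ or $\lambda=2^{-m\alpha}$ gives geometric decay $c(E^{(1)}_{m}),c(E^{(2)}_{m})\leqslant C\cdot 2^{-m(p/2-\theta-1-\beta p)}$ and a similar bound for the first piece of $E^{(3)}_{m}$ with $\alpha$ in place of $\beta$; all three are summable in $m$ by the choice of $\alpha,\beta$. For the second piece of $E^{(3)}_{m}$, the sublinearity of the capacity together with Lemma \ref{estimate of rho}(1) yields the $m$-independent bound $C\cdot 2^{-mp(\alpha-\beta)}$, again summable since $\alpha>\beta$. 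Summing and applying the Borel--Cantelli lemma for the Choquet capacity $c$ gives
\[
c\Bigl(\limsup_{m\to\infty}\bigl\{I(\boldsymbol{B}^{m},\boldsymbol{B}^{m+1})>2^{-m\beta}\bigr\}\Bigr)=0.
\]

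By the inclusion (\ref{not Cauchy}), this shows that outside a $\mathcal{B}(\Omega)$-measurable set of zero capacity, $d_{p}(\boldsymbol{B}^{m},\boldsymbol{B}^{m+1})\leqslant R\cdot 2^{-m\beta}$ for all sufficiently large $m$, so $\sum_{m}d_{p}(\boldsymbol{B}^{m},\boldsymbol{B}^{m+1})<\infty$ and $\boldsymbol{B}^{m}$ is Cauchy in the complete metric space $(G\Omega_{p}(\mathbb{R}^{d}),d_{p})$. Denoting the quasi-sure limit by $\boldsymbol{B}=(1,B^{1},B^{2})$ gives the desired canonical enhancement. The main technical obstacle, which the splitting of $E^{(3)}_{m}$ is designed to handle, is that the term $\rho_{1}(\boldsymbol{B}^{m},\boldsymbol{B}^{m+1})(\rho_{1}(\boldsymbol{B}^{m})+\rho_{1}(\boldsymbol{B}^{m+1}))$ in Proposition \ref{p-var control} is a product of an unbounded and a vanishing quantity under the capacity; the choice of the intermediate scale $\alpha$ is what simultaneously absorbs the $m$-uniform tail of $\rho_{1}(\boldsymbol{B}^{m})$ against the small factor $\rho_{1}(\boldsymbol{B}^{m},\boldsymbol{B}^{m+1})$ without destroying summability.
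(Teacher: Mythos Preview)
Your proof is correct and follows essentially the same route as the paper: bound $c(I(\boldsymbol{B}^{m},\boldsymbol{B}^{m+1})>2^{-m\beta})$ via Lemma \ref{estimate of rho}, split the product term, and apply Borel--Cantelli through (\ref{not Cauchy}). The only cosmetic difference is that the paper handles the product event with the specific choice $\alpha=2\beta$ (so it requires $0<\beta<(p-2\theta-2)/(4p)$), whereas you introduce a free intermediate scale $\alpha\in(\beta,(\tfrac{p}{2}-\theta-1)/p)$; both splittings do the same job.
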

\begin{proof}
By Lemma \ref{estimate of rho}, we have 
\begin{eqnarray*}
c(I(\boldsymbol{B}^{m},\boldsymbol{B}^{m+1})>\frac{1}{2^{m\beta}}) & \leqslant & \sum_{j=1}^{2}c(\rho_{j}(\boldsymbol{B}^{m},\boldsymbol{B}^{m+1})>\frac{1}{2^{m\beta}})\\
 &  & +c(\rho_{1}(\boldsymbol{B}^{m},\boldsymbol{B}^{m+1})(\rho_{1}(\boldsymbol{B}^{m})+\rho_{1}(\boldsymbol{B}^{m+1}))>\frac{1}{2^{m\beta}})\\
 & \leqslant & 2c(\rho_{1}(\boldsymbol{B}^{m},\boldsymbol{B}^{m+1})>\frac{1}{2^{2m\beta}})+c(\rho_{2}(\boldsymbol{B}^{m},\boldsymbol{B}^{m+1})>\frac{1}{2^{m\beta}})\\
 &  & +c(\rho_{1}(\boldsymbol{B}^{m})>\frac{2^{m\beta}}{2})+c(\rho_{1}(\boldsymbol{B}^{m+1})>\frac{2^{m\beta}}{2})\\
 & \leqslant & C[\frac{1}{2^{m\beta p}}+\frac{1}{2^{m(\frac{p}{2}-\theta-2\beta p-1)}}+\frac{1}{2^{m(\frac{p}{2}-\theta-\frac{\beta p}{2}-1)}}],
\end{eqnarray*}
where $\theta\in(0,\frac{p}{2}-1)$ is some fixed constant. 

If we choose $\beta$ such that 
\[
0<\beta<\frac{p-2\theta-2}{4p},
\]
then 
\[
\sum_{m=1}^{\infty}c(I(\boldsymbol{B}^{m},\boldsymbol{B}^{m+1})>\frac{1}{2^{m\beta}})<\infty.
\]
By the Borel-Cantelli lemma, we have 
\[
c(\limsup_{m\rightarrow\infty}\{\omega:\ I(\boldsymbol{B}^{m},\boldsymbol{B}^{m+1})>\frac{1}{2^{m\beta}}\})=0,
\]
and the result follows from the inclusion (\ref{not Cauchy}).
\end{proof}

With the help of Theorem \ref{enhancement} and the smoothness of
$\langle B^{\alpha},B^{\beta}\rangle_{t}$ (by definition the sample
paths of $\langle B^{\alpha},B^{\beta}\rangle_{t}$ are smooth), we
are able to apply the universal limit theorem in rough path theory
to define RDEs driven by $G$-Brownian motion in the pathwise sense.
More precisely, consider the following $N$-dimensional RDE in the
sense of rough paths:

\begin{equation}
dY_{t}=\widetilde{b}(Y_{t})dt+\widetilde{h}_{\alpha\beta}(Y_{t})d\langle B^{\alpha},B^{\beta}\rangle_{t}+V_{\alpha}(Y_{t})dB_{t}^{\alpha},\label{RDE}
\end{equation}
with initial condition $Y_{0}=x,$ where $\widetilde{b},\widetilde{h}_{\alpha\beta},V_{\alpha}$
are $C_{b}^{3}$-vector fields on $\mathbb{R}^{N}$. Then outside
a $\mathcal{B}(\Omega)$-measurable set of capacity zero, (\ref{RDE})
has a unique full solution $\boldsymbol{Y}$ in $G\Omega_{p}(\mathbb{R}^{N})$.
$\boldsymbol{Y}$ is constructed as the limit of the enhancement of
$Y_{t}^{n}$ in $G\Omega_{p}(\mathbb{R}^{N})$ under the $p$-variation
distance, where $Y_{t}^{n}$ is the unique classical solution of the
following ordinary differential equation:
\begin{eqnarray}
dY_{t}^{n} & = & \widetilde{b}(Y_{t}^{n})dt+\widetilde{h}_{\alpha\beta}(Y_{t}^{n})d\langle B^{\alpha},B^{\beta}\rangle_{t}+V_{\alpha}(Y_{t}^{n})d(B^{n})_{t}^{\alpha},\label{approximated ODE}
\end{eqnarray}
with $Y_{0}^{n}=x,$ in which $B_{t}^{n}$ is the dyadic piecewise
linear approximation of $B_{t}.$ 

If we only consider solutions instead of full solutions (i.e., only
consider the first level), then for quasi-surely, (\ref{RDE}) has
a unique solution $Y_{t}\in C([0,1];\mathbb{R}^{N})$, which is constructed
as the uniform limit of the solution of (\ref{approximated ODE})
with initial condition $Y_{0}^{n}=x.$

Before the end of this section, we are going to give an explicit description
of the second level $B_{s,t}^{2}$ of $B_{t}$ defined in Theorem
\ref{enhancement}, which reveals the nature of $B_{s,t}^{2}$ itself.
Such result is fundamental to understand the relation between  SDEs
and RDEs driven by $G$-Brownian motion.
\begin{lem}
\label{same limit}Assume that $X_{n}$ converges to $X$ in $L_{G}^{2}(\Omega)$
and converges to $Y$ quasi-surely. Then for quasi-surely, $X=Y.$\end{lem}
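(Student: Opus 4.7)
The plan is to extract from the $L_G^2$-convergence a subsequence that converges quasi-surely to $X$, and then compare with the quasi-sure convergence to $Y$ that is assumed.

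First, I would invoke the Markov inequality for the capacity $c$ stated earlier in the excerpt: for any $\lambda>0$,
\[
c(|X_n - X|>\lambda)\leqslant \lambda^{-2}\,\mathbb{E}^{G}[|X_n-X|^{2}].
\]
Since $X_n\to X$ in $L_G^2(\Omega)$, I can choose a subsequence $\{n_k\}$ with $\mathbb{E}^{G}[|X_{n_k}-X|^{2}]\leqslant 2^{-k}$. Setting $A_k:=\{|X_{n_k}-X|>2^{-k/4}\}\in\mathcal{B}(\Omega)$, the Markov inequality yields $c(A_k)\leqslant 2^{-k/2}$, so $\sum_k c(A_k)<\infty$. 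The Borel--Cantelli lemma under $c$ (also stated in the excerpt) then gives $c(\limsup_k A_k)=0$, i.e.\ $X_{n_k}\to X$ quasi-surely.

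Second, since by hypothesis $X_n\to Y$ quasi-surely, the subsequence $X_{n_k}$ also converges quasi-surely to $Y$, outside some Borel set $N_2$ with $c(N_2)=0$. Combining with the previous step, on the complement of the Borel set $N_1\cup N_2$ of capacity zero (where $N_1=\limsup_k A_k$) we simultaneously have $X_{n_k}\to X$ and $X_{n_k}\to Y$; uniqueness of limits in $\mathbb{R}$ then forces $X=Y$ on this complement, which is precisely the statement that $X=Y$ quasi-surely.

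The main (and essentially only) obstacle is the passage from $L_G^2$-convergence to quasi-sure convergence along a subsequence; once the capacity analogue of the classical Chebyshev/Borel--Cantelli argument is set up via the tools already developed in the excerpt, the remainder is routine. No delicate point about the structure of the family $\mathcal{P}$ or the relation $\mathbb{E}^G=\hat{\mathbb{E}}$ is needed, since the capacity-based formulation of these two inequalities handles everything uniformly.
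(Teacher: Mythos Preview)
Your proposal is correct and follows essentially the same approach as the paper: extract a subsequence via the Chebyshev/Markov inequality for the capacity, apply the Borel--Cantelli lemma under $c$ to get quasi-sure convergence to $X$, and conclude by uniqueness of limits. The only differences are cosmetic choices of rates (the paper uses $\mathbb{E}^{G}[|X_{n_k}-X|^2]\leqslant k^{-4}$ and thresholds $1/k$, whereas you use $2^{-k}$ and $2^{-k/4}$), which are immaterial.
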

\begin{proof}
By the Chebyshev inequality for the capacity, we have 
\[
c(|X_{n}-X|>\epsilon)\leqslant\frac{1}{\epsilon^{2}}\mathbb{E}^{G}[|X_{n}-X|^{2}],\ \forall\epsilon>0.
\]
Since 
\[
X_{n}\rightarrow X\ \ \ \mbox{in \ensuremath{L_{G}^{2}(\Omega),}}
\]
we can extract a subsequence $X_{n_{k}},$ such that for any $k\geqslant1,$
\[
\mathbb{E}^{G}[|X_{n_{k}}-X|^{2}]\leqslant\frac{1}{k^{4}}.
\]
 It follows that 
\[
c(|X_{n_{k}}-X|>\frac{1}{k})\leqslant\frac{1}{k^{2}},\ \ \ \forall k\geqslant1,
\]
and 
\[
\sum_{k=1}^{\infty}c(|X_{n_{k}}-X|>\frac{1}{k})<\infty.
\]
By the Borel-Cantelli lemma for the capacity, we arrive at for quasi-surely,
$X_{n_{k}}$ converges to $X.$ By assumption it follows that for
quasi-surely, $X=Y.$
\end{proof}

The following result shows the nature of the second level of $B_{t}.$
In the case when $B_{t}$ reduces to the classical Brownian motion,
it is essentially the relation between Stratonovich and It$\hat{\mbox{o}}$
integrals. 
\begin{prop}
\label{representation of second level}Let $\boldsymbol{B}_{s,t}=(1,B_{s,t}^{1},B_{s,t}^{2})$
be the quasi-surely defined enhancement of $B_{t}$ in Theorem \ref{enhancement}.
Then for any $0\leqslant s<t\leqslant1,$ for quasi-surely, we have
\begin{equation}
B_{s,t}^{2;\alpha,\beta}=\int_{s}^{t}B_{s,u}^{\alpha}dB_{u}^{\beta}+\frac{1}{2}\langle B^{\alpha},B^{\beta}\rangle_{s,t},\label{second level}
\end{equation}
where the integral on the R.H.S. of (\ref{second level}) is the It$\hat{\mbox{o}}$
integral.\end{prop}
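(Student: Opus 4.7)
The plan is to compute the $L_G^2$-limit of the sequence $B_{s,t}^{m,2;\alpha,\beta}$ explicitly and, via Lemma \ref{same limit}, identify it with the quasi-sure limit $B_{s,t}^{2;\alpha,\beta}$ produced by Theorem \ref{enhancement}. I begin with dyadic endpoints $s=a/2^N$, $t=b/2^N$ and $m\geqslant N$. On each subinterval $[t_{l-1}^m,t_l^m]\subset[s,t]$ the path $B^{m,\alpha}$ is affine with slope $\Delta_l^m B^{\alpha}/\Delta t^m$, so a direct Riemann--Stieltjes computation gives
\[
\int_{t_{l-1}^m}^{t_l^m} B_{s,v}^{m,\alpha}\, dB_v^{m,\beta} = B_{s,t_{l-1}^m}^{\alpha}\,\Delta_l^m B^{\beta} + \tfrac{1}{2}\,\Delta_l^m B^{\alpha}\,\Delta_l^m B^{\beta},
\]
and summing over the dyadic partition of $[s,t]$ yields
\[
B_{s,t}^{m,2;\alpha,\beta} = \sum_{l} B_{s,t_{l-1}^m}^{\alpha}\,\Delta_l^m B^{\beta} + \tfrac{1}{2}\sum_{l} \Delta_l^m B^{\alpha}\,\Delta_l^m B^{\beta}.
\]

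Next I would show that the first sum converges in $L_G^2$ to the $G$-It$\hat{\mbox{o}}$ integral $\int_s^t B_{s,u}^{\alpha}\,dB_u^{\beta}$, this being the standard fact that left-point Riemann sums of a continuous integrand in $M_G^2(s,t)$ converge in $L_G^2$ to the corresponding $G$-It$\hat{\mbox{o}}$ integral; the needed convergence of the dyadic step-function approximations of $v\mapsto B_{s,v}^{\alpha}$ in the $\|\cdot\|_{M_G^2(s,t)}$ semi-norm follows from the moment inequality (\ref{moment inequality}) and the sublinearity of $\mathbb{E}^G$. The second sum converges in $L_G^2$ to $\langle B^{\alpha},B^{\beta}\rangle_{s,t}$ directly from the polarised $L_G^2$-definition of the cross-variation process along refining dyadic partitions. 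Combining these with Theorem \ref{enhancement}, which gives $B_{s,t}^{m,2;\alpha,\beta}\to B_{s,t}^{2;\alpha,\beta}$ quasi-surely at each fixed $(s,t)$ via the $d_p$-convergence, Lemma \ref{same limit} yields identity (\ref{second level}) quasi-surely for each fixed dyadic pair. Intersecting over the countably many dyadic pairs keeps the exceptional set negligible.

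The extension to arbitrary $0\leqslant s<t\leqslant 1$ I would handle by joint continuity of both sides in the endpoints: the left-hand side is continuous because $\boldsymbol{B}\in G\Omega_p(\mathbb{R}^d)$ quasi-surely, while on the right-hand side the cross-variation is continuous by construction and continuity of the It$\hat{\mbox{o}}$ term follows by writing $B_{s,v}^{\alpha}=B_v^{\alpha}-B_s^{\alpha}$ and appealing to continuity of the indefinite $G$-It$\hat{\mbox{o}}$ integral $t\mapsto\int_0^t B_v^{\alpha}\,dB_v^{\beta}$ together with continuity of $B$ itself. The main obstacle is the $L_G^2$-convergence of the left-point Riemann sums to the $G$-It$\hat{\mbox{o}}$ integral, in particular verifying that $v\mapsto B_{s,v}^{\alpha}$ belongs to $M_G^2(s,t)$ and is well approximated by its dyadic step-function restrictions in the $\|\cdot\|_{M_G^2(s,t)}$ semi-norm; this rests on the quasi-sure H$\ddot{\mbox{o}}$lder regularity of $G$-Brownian sample paths coming from (\ref{moment inequality}) together with the continuity properties of $\mathbb{E}^G$.
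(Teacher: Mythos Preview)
Your proposal is correct and follows essentially the same approach as the paper: compute $B_{s,t}^{m,2;\alpha,\beta}$ explicitly for dyadic endpoints as a left-point Riemann sum plus a quadratic-variation sum, identify the $L_G^2$-limit as $\int_s^t B_{s,u}^{\alpha}\,dB_u^{\beta}+\tfrac{1}{2}\langle B^{\alpha},B^{\beta}\rangle_{s,t}$, and invoke Lemma~\ref{same limit} against the quasi-sure limit coming from Theorem~\ref{enhancement}. The paper simply writes ``without loss of generality, we assume that $s,t$ are both dyadic points'' and leaves the passage to general endpoints implicit; your explicit treatment via countable intersection over dyadic pairs followed by a joint-continuity argument is a genuine improvement in rigor over the paper's presentation, and in fact yields the slightly stronger conclusion that the identity holds for all $(s,t)$ simultaneously outside a single negligible set.
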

\begin{proof}
We know from Theorem \ref{enhancement} that for quasi-surely, 
\[
\lim_{n\rightarrow\infty}d_{p}(\boldsymbol{B}^{n},\boldsymbol{B})=0.
\]
From the definition of $d_{p},$ it is straight forward that for quasi-surely,
$B_{s,t}^{n,2}$ converges uniformly to $B_{s,t}^{2}$. 

Without lost of generality, we assume that $s,t$ are both dyadic
points in $[0,1]$. It follows that when $n$ is large enough, 
\begin{eqnarray*}
B_{s,t}^{n,2;\alpha,\beta} & = & \int_{s<u<v<t}d(B^{n})_{u}^{\alpha}d(B^{n})_{v}^{\beta}\\
 & = & \int_{s}^{t}(B^{n})_{s,v}^{\alpha}d(B^{n})_{v}^{\beta}\\
 & = & \sum_{k:[t_{k-1}^{n},t_{k}^{n}]\subset[s,t]}\frac{\Delta_{k}^{n}B^{\beta}}{\Delta t^{n}}\int_{t_{k-1}^{n}}^{t_{k}^{n}}(\frac{v-t_{k-1}^{n}}{\Delta t^{n}}B_{k}^{\alpha}+\frac{t_{k}^{n}-v}{\Delta t^{n}}B_{k-1}^{\alpha}-B_{s}^{\alpha})dv\\
 & = & \sum_{k:[t_{k-1}^{n},t_{k}^{n}]\subset[s,t]}(\frac{B_{k-1}^{\alpha}+B_{k}^{\alpha}}{2}-B_{s}^{\alpha})\Delta_{k}^{n}B^{\beta}\\
 & = & \sum_{k:[t_{k-1}^{n},t_{k}^{n}]\subset[s,t]}(B_{k-1}^{\alpha}-B_{s}^{\alpha})\Delta_{k}^{n}B^{\beta}+\frac{1}{2}\sum_{k}\Delta_{k}^{n}B^{\alpha}\Delta_{k}^{n}B^{\beta}.
\end{eqnarray*}
From properties of It$\hat{\mbox{o}}$ integral and the cross-variation
$\langle B^{\alpha},B^{\beta}\rangle_{t}$, we know that the R.H.S.
of the above equality converges to $\int_{s}^{t}B_{s,u}^{\alpha}dB_{u}^{\beta}+\frac{1}{2}\langle B^{\alpha},B^{\beta}\rangle_{s,t}$
in $L_{G}^{2}(\Omega).$ 

Consequently, by Lemma \ref{same limit} $B_{s,t}^{2}$ must coincide
with $\int_{s}^{t}B_{s,u}^{\alpha}dB_{u}^{\beta}+\frac{1}{2}\langle B^{\alpha},B^{\beta}\rangle_{s,t}$
quasi-surely.
\end{proof}

\section{The Relation between SDEs and RDEs Driven by $G$-Brownian Motion}

So far we already know that there are two types of well-defined differential
equations driven by $G$-Brownian motion: SDEs which are defined in
the $L_{G}^{2}$-sense with respect to the $G$-expectation $\mathbb{E}^{G},$
and RDEs which are quasi-surely defined in the pathwise sense. This
section is devoted to the study of the fundamental relation between
these two types of differential equations.

Consider the following $N$-dimensional  SDE driven by $G$-Brownian
motion on $(\Omega,L_{G}^{2}(\Omega),\mathbb{E}):$ 
\begin{equation}
dX_{t}=b(X_{t})dt+h_{\alpha\beta}(X_{t})d\langle B^{\alpha},B^{\beta}\rangle_{t}+V_{\alpha}(X_{t})dB_{t}^{\alpha},\label{Ito in section 4}
\end{equation}
with initial condition $X_{0}=x\in\mathbb{R}^{N}.$ Here we assume
that $b,h_{\alpha\beta},V_{\alpha}$ are $C_{b}^{3}$-vector fields
on $\mathbb{R}^{N}.$ 

Our aim is to find the correct RDE of the form (\ref{RDE}) whose
strong solution coincides with $X_{t}$ quasi-surely in the pathwise
sense.

Let's first illustrate the idea in an informal way. We are going to
use the rough Taylor expansion in the theory of RDEs (see Corollary
12.8 in \cite{friz2010multidimensional}) and Proposition \ref{representation of second level}
to find the correct form of the RDE we are looking for. 

Consider the following general RDE:
\begin{equation}
dY_{t}=\widetilde{b}(Y_{t})dt+\widetilde{h}_{\alpha\beta}(Y_{t})d\langle B^{\alpha},B^{\beta}\rangle_{t}+\widetilde{V}_{\alpha}(Y_{t})dB_{t}^{\alpha},\label{RDE tilde}
\end{equation}
with initial condition $Y_{0}=x,$ where $\widetilde{b},\widetilde{h}_{\alpha\beta},\widetilde{V}_{\alpha}$
are $C_{b}^{3}$-vector fields on $\mathbb{R}^{N}$. By the smoothness
of the cross variation process $\langle B^{\alpha},B^{\beta}\rangle$,
and the roughness of $B_{t}$ studied in the last section, we know
from the rough Taylor expansion theorem that for quasi-surely, for
some control function $\omega(s,t),$ the solution $Y_{t}$ of (\ref{RDE tilde})
satisfies, when $\omega(s,t)\leqslant1,$ 
\begin{equation}
|Y_{s,t}-\widetilde{b}(Y_{s})(t-s)-\widetilde{h}_{\alpha\beta}(Y_{s})\langle B^{\alpha},B^{\beta}\rangle_{s,t}-\widetilde{V}_{\alpha}(Y_{s})B_{s,t}^{1;\alpha}-D\widetilde{V}_{\beta}(Y_{s})\cdot\widetilde{V}_{\alpha}(Y_{s})B_{s,t}^{2;\alpha,\beta}|\leqslant C\omega(s,t)^{\theta},\label{rough Taylor expansion}
\end{equation}
where $\omega(s,t)$ $C$ and $\theta>1$ are two constants not depending
on $s,t.$ Note that inequality (\ref{rough Taylor expansion}) reveals
the local behavior of the solution $Y_{t}$. It follows from Proposition
\ref{representation of second level} that for quasi-surely,
\[
|Y_{s,t}-\widetilde{I}_{s,t}|\leqslant C\omega(s,t)^{\theta},
\]
where
\begin{eqnarray}
\widetilde{I}_{s,t}: & = & \widetilde{b}(Y_{s})(t-s)+(\widetilde{h}_{\alpha\beta}(Y_{s})+\frac{1}{2}D\widetilde{V}_{\beta}(Y_{s})\cdot\widetilde{V}_{\alpha}(Y_{s}))d\langle B^{\alpha},B^{\beta}\rangle_{t}+\widetilde{V}_{\alpha}(Y_{s})B_{s,t}^{1;\alpha}\label{rough Taylor with Ito}\\
 &  & +D\widetilde{V}_{\beta}(Y_{s})\cdot\widetilde{V}_{\alpha}(Y_{s})\int_{s}^{t}B_{s,u}^{\alpha}dB_{u}^{\beta}\nonumber 
\end{eqnarray}
Now if we consider the global behavior of $Y_{t},$ we may sum up
inequality (\ref{rough Taylor with Ito}) over dyadic intervals $[t_{k-1}^{n},t_{k}^{n}]$
and then take limit (in $L_{G}^{2}(\Omega;\mathbb{R}^{N})$) to obtain
that for quasi-surely,
\begin{eqnarray}
Y_{s,t} & = & \int_{s}^{t}\widetilde{b}(Y_{u})du+\int_{s}^{t}(\widetilde{h}_{\alpha\beta}(Y_{u})+\frac{1}{2}D\widetilde{V}_{\beta}(Y_{u})\cdot\widetilde{V}_{\alpha}(Y_{u}))d\langle B^{\alpha},B^{\beta}\rangle_{u}+\int_{s}^{t}\widetilde{V}_{\alpha}(Y_{u})dB_{u}^{\alpha}\nonumber \\
 &  & +(L_{G}^{2}-)\lim_{n\rightarrow\infty}\sum_{k:[t_{k-1}^{n},t_{k}^{n}]\subset[s,t]}D\widetilde{V}_{\alpha}(Y_{t_{k-1}^{n}})\cdot\widetilde{V}_{\beta}(Y_{t_{k-1}^{n}})\int_{t_{k-1}^{n}}^{t_{k}^{n}}B_{t_{k-1}^{n},u}^{\alpha}dB_{u}^{\beta},\label{representation of Y_s,t}
\end{eqnarray}
where the integrals with respect to $B_{t}$ are interpreted as It$\hat{\mbox{o}}$
integrals. On the other hand, by the distribution of $B_{t}$ and
properties of $G$-It$\hat{\mbox{o}}$ integral, it is not hard to
prove that the $L_{G}^{2}$-limit in the last term of the above identity
is zero. Therefore, we know that $Y_{t}$ solves the  SDE
\[
dX_{t}=\widetilde{b}(X_{t})dt+(\widetilde{h}_{\alpha\beta}(X_{t})+\frac{1}{2}D\widetilde{V}_{\beta}(X_{t})\cdot\widetilde{V}_{\alpha}(X_{t}))d\langle B^{\alpha},B^{\beta}\rangle_{t}+\widetilde{V}_{\alpha}(X_{t})dB_{t}^{\alpha}.
\]
In other words, if $X_{t}$ is the solution of the  SDE (\ref{Ito in section 4}),
it is natural to expect that for quasi-surely, $X_{t}$ is the solution
of the following RDE:
\begin{equation}
dY_{t}=b(Y_{t})dt+(h_{\alpha\beta}(Y_{t})-\frac{1}{2}DV_{\beta}(Y_{t})\cdot V_{\alpha}(Y_{t}))d\langle B^{\alpha},B^{\beta}\rangle+V_{\alpha}(Y_{t})dB_{t}^{\alpha},\label{RDE section 4}
\end{equation}
with the same initial condition.

In the remaining of this section, we are going to prove this claim
in a rigorous way.

From now on, assume that $X_{t}$ is the solution of the  SDE (\ref{Ito in section 4})
and $Y_{t}$ is the solution of the RDE (\ref{RDE section 4}) with
the same initial condition $x\in\mathbb{R}^{N}$, where the coefficients
$b,h_{\alpha\beta},V_{\alpha}$ are $C_{b}^{3}$-vector fields on
$\mathbb{R}^{N}.$ For simplicity we will also use the same notation
to denote constants only depending on $d,N,G,p$ and the coefficients
of (\ref{Ito in section 4}), although they may be different from
line to line.

The following lemma enables us to show that the $L_{G}^{2}$-limit
in the last term of the identity (\ref{representation of Y_s,t})
is zero.
\begin{lem}
\label{zero limit}Let $f\in C_{b}(\mathbb{R}^{N})$, and $s<t$ be
two dyadic points in $[0,1]$ (i.e., $s=t_{k}^{m}$ and $t=t_{l}^{m}$
for some $m$ and $k<l$). Then for any $\alpha,\beta=1,2,\cdots,d,$
\[
\lim_{n\rightarrow\infty}\mathbb{E}^{G}[(\sum_{k:[t_{k-1}^{n},t_{k}^{n}]\subset[s,t]}f(Y_{t_{k-1}^{n}})\int_{t_{k-1}^{n}}^{t_{k}^{n}}B_{t_{k-1}^{n},u}^{\alpha}dB_{u}^{\beta})^{2}]=0.
\]
\end{lem}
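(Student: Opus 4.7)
Write $I_{k}^{n}:=\int_{t_{k-1}^{n}}^{t_{k}^{n}}B_{t_{k-1}^{n},u}^{\alpha}dB_{u}^{\beta}$ and $\xi_{k}^{n}:=f(Y_{t_{k-1}^{n}})$, so that the object we must control is $S^{n}:=\sum_{k:[t_{k-1}^{n},t_{k}^{n}]\subset[s,t]}\xi_{k}^{n}I_{k}^{n}$, and we want $\mathbb{E}^{G}[(S^{n})^{2}]\to0$. The strategy is a sublinear analogue of the classical $L^{2}$-orthogonality argument for discrete martingales: show that all cross terms vanish upon conditioning, reducing matters to a diagonal sum that is of order $2^{n}\cdot(\Delta t^{n})^{2}=\Delta t^{n}$.

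\textbf{Step 1 (conditional centering of $I_{k}^{n}$).} Since $(B_{t_{k-1}^{n},u}^{\alpha})_{u\in[t_{k-1}^{n},t_{k}^{n}]}$ lies in $M_{G}^{2}(t_{k-1}^{n},t_{k}^{n})$, property (3) of the $G$-It\^o integral (with $X=0$) gives $\mathbb{E}^{G}[I_{k}^{n}|\Omega_{t_{k-1}^{n}}]=0$; applying the same fact to $-B_{t_{k-1}^{n},u}^{\alpha}$ yields $\mathbb{E}^{G}[-I_{k}^{n}|\Omega_{t_{k-1}^{n}}]=0$ as well.

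\textbf{Step 2 (killing cross terms by iterated conditioning).} Let $S_{m}^{n}=\sum_{k\le m}\xi_{k}^{n}I_{k}^{n}$ (over indices with $[t_{k-1}^{n},t_{k}^{n}]\subset[s,t]$). Expand
\[
(S_{m}^{n})^{2}=(S_{m-1}^{n})^{2}+2S_{m-1}^{n}\xi_{m}^{n}I_{m}^{n}+(\xi_{m}^{n}I_{m}^{n})^{2}.
\]
Since $(S_{m-1}^{n})^{2}$ and $\eta:=2S_{m-1}^{n}\xi_{m}^{n}$ are $\Omega_{t_{m-1}^{n}}$-measurable, the conditional properties listed in Section 2 (in particular $\mathbb{E}^{G}[\eta X|\Omega_{t}]=\eta^{+}\mathbb{E}^{G}[X|\Omega_{t}]+\eta^{-}\mathbb{E}^{G}[-X|\Omega_{t}]$) combined with Step 1 give $\mathbb{E}^{G}[2S_{m-1}^{n}\xi_{m}^{n}I_{m}^{n}|\Omega_{t_{m-1}^{n}}]=0$. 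Using sublinearity to split the remaining terms and then the tower property yields
\[
\mathbb{E}^{G}[(S_{m}^{n})^{2}]\leq\mathbb{E}^{G}[(S_{m-1}^{n})^{2}]+\mathbb{E}^{G}[(\xi_{m}^{n}I_{m}^{n})^{2}].
\]
Iterating and using that $f$ is bounded, we obtain
\[
\mathbb{E}^{G}[(S^{n})^{2}]\leq\|f\|_{\infty}^{2}\sum_{k}\mathbb{E}^{G}[(I_{k}^{n})^{2}].
\]

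\textbf{Step 3 (size bound on diagonal terms).} By the $G$-It\^o isometry (Proposition on $G$-It\^o isometry in the preliminaries, applied componentwise) together with the upper bound $d\langle B^{\beta}\rangle_{u}\le\overline{\sigma}^{2}du$, or directly by the estimate $\mathbb{E}^{G}[(\int_{0}^{T}\eta_{u}dB_{u}^{\beta})^{2}]\leq\overline{\sigma}^{2}\mathbb{E}^{G}[\int_{0}^{T}\eta_{u}^{2}du]$ from the preliminaries, combined with a Fubini-type inequality and $\mathbb{E}^{G}[(B_{t_{k-1}^{n},u}^{\alpha})^{2}]\leq\overline{\sigma}^{2}(u-t_{k-1}^{n})$, one gets
\[
\mathbb{E}^{G}[(I_{k}^{n})^{2}]\leq C\int_{t_{k-1}^{n}}^{t_{k}^{n}}(u-t_{k-1}^{n})du=\frac{C}{2}(\Delta t^{n})^{2}.
\]
Summing over the $O(2^{n})$ indices $k$ gives $\sum_{k}\mathbb{E}^{G}[(I_{k}^{n})^{2}]\leq C\Delta t^{n}\to0$, which proves the claim.

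\textbf{Expected obstacle.} The main subtlety lies in Step 2: in contrast with the classical case, the expectation is only sublinear, so one cannot simply write $\mathbb{E}^{G}[(S^{n})^{2}]=\sum_{k}\mathbb{E}^{G}[(\xi_{k}^{n}I_{k}^{n})^{2}]$; one must carefully justify the conditioning step and ensure that the required cross expectations are literally zero (not just bounded above by zero), which is where the two-sided vanishing $\mathbb{E}^{G}[\pm I_{k}^{n}|\Omega_{t_{k-1}^{n}}]=0$ from Step 1 is essential. A secondary (minor) point is to ensure that $f(Y_{t_{k-1}^{n}})$ genuinely lies in $L_{G}^{2}(\Omega_{t_{k-1}^{n}})$; this follows from boundedness of $f$ together with the fact that $Y$ is a quasi-surely defined continuous adapted process obtained from the RDE construction in Section 4.
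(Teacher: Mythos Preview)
Your proof is correct and follows essentially the same approach as the paper: both arguments rest on (i) the two-sided conditional centering $\mathbb{E}^{G}[\pm I_{k}^{n}\mid\Omega_{t_{k-1}^{n}}]=0$ to annihilate cross terms, and (ii) the elementary estimate $\mathbb{E}^{G}[(I_{k}^{n})^{2}]\leq C(\Delta t^{n})^{2}$ for the diagonal, so that the total is $O(\Delta t^{n})$. The only organisational difference is that the paper expands $(S^{n})^{2}$ in full and bounds each cross term $\mathbb{E}^{G}[\xi_{k}^{n}I_{k}^{n}\xi_{l}^{n}I_{l}^{n}]$ for $k<l$ separately by conditioning on $\Omega_{t_{l-1}^{n}}$, whereas you package the same cancellation recursively via $(S_{m}^{n})^{2}=(S_{m-1}^{n})^{2}+2S_{m-1}^{n}\xi_{m}^{n}I_{m}^{n}+(\xi_{m}^{n}I_{m}^{n})^{2}$; your version is marginally cleaner but there is no genuine difference in content.
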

\begin{proof}
From direct calculation, we have 
\begin{eqnarray*}
 &  & \mathbb{E}^{G}[(\sum_{k:[t_{k-1}^{n},t_{k}^{n}]\subset[s,t]}f(Y_{t_{k-1}^{n}})\int_{t_{k-1}^{n}}^{t_{k}^{n}}B_{t_{k-1}^{n},u}^{\alpha}dB_{u}^{\beta})^{2}]\\
 & \leqslant & \|f\|_{\infty}^{2}\sum_{k:[t_{k-1}^{n},t_{k}^{n}]\subset[s,t]}\mathbb{E}^{G}[(\int_{t_{k-1}^{n}}^{t_{k}^{n}}B_{t_{k-1}^{n},u}^{\alpha}dB_{u}^{\beta})^{2}]\\
 &  & +2\sum_{\substack{k<l\\
{}[t_{k-1}^{n},t_{k}^{n}],[t_{l-1}^{n},t_{l}^{n}]\subset[s,t]
}
}\mathbb{E}^{G}[f(Y_{t_{k-1}^{n}})(\int_{t_{k-1}^{n}}^{t_{k}^{n}}B_{t_{k-1}^{n},u}^{\alpha}dB_{u}^{\beta})f(Y_{t_{l-1}^{n}})(\int_{t_{l-1}^{n}}^{t_{l}^{n}}B_{t_{l-1}^{n},u}^{\alpha}dB_{u}^{\beta})]
\end{eqnarray*}
\begin{eqnarray*}
 & \leqslant & C\|f\|_{\infty}^{2}\sum_{k:[t_{k-1}^{n},t_{k}^{n}]\subset[s,t]}(\Delta t^{n})^{2}\\
 &  & +2\sum_{\substack{k<l\\
{}[t_{k-1}^{n},t_{k}^{n}],[t_{l-1}^{n},t_{l}^{n}]\subset[s,t]
}
}(\mathbb{E}^{G}[(f(Y_{t_{k-1}^{n}})(\int_{t_{k-1}^{n}}^{t_{k}^{n}}B_{t_{k-1}^{n},u}^{\alpha}dB_{u}^{\beta})f(Y_{t_{l-1}^{n}}))^{+}\\
 &  & \cdot\mathbb{E}^{G}[\int_{t_{l-1}^{n}}^{t_{l}^{n}}B_{t_{l-1}^{n},u}^{\alpha}dB_{u}^{\beta}|\Omega_{t_{l-1}^{n}}]]+\mathbb{E}^{G}[(f(Y_{t_{k-1}^{n}})(\int_{t_{k-1}^{n}}^{t_{k}^{n}}B_{t_{k-1}^{n},u}^{\alpha}dB_{u}^{\beta})f(Y_{t_{l-1}^{n}}))^{-}\\
 &  & \cdot\mathbb{E}^{G}[-\int_{t_{l-1}^{n}}^{t_{l}^{n}}B_{t_{l-1}^{n},u}^{\alpha}dB_{u}^{\beta}|\Omega_{t_{l-1}^{n}}]])\\
 & \leqslant & C\|f\|_{\infty}^{2}\Delta t^{n},
\end{eqnarray*}
and the result follows easily.
\end{proof}

Now we are in position to prove our main result of this section.
\begin{thm}
\label{relation}For quasi-surely, 
\[
X_{t}=Y_{t},\ \forall t\in[0,1].
\]
\end{thm}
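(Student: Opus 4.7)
The plan is to make precise the heuristic argument that immediately precedes the statement, using the rough Taylor expansion together with Proposition~\ref{representation of second level}, Lemma~\ref{zero limit} and Lemma~\ref{same limit}. First I would apply the rough Taylor expansion (\ref{rough Taylor expansion}) to $Y$, the solution of the RDE (\ref{RDE section 4}), which amounts to choosing $\widetilde{b} = b$, $\widetilde{h}_{\alpha\beta} = h_{\alpha\beta} - \tfrac{1}{2} DV_{\beta} \cdot V_{\alpha}$, $\widetilde{V}_{\alpha} = V_{\alpha}$ in the general form. This produces a control function $\omega$ and constants $C > 0$, $\theta > 1$ (depending only on $d, N, G, p$ and the coefficients) such that (\ref{rough Taylor expansion}) holds quasi-surely whenever $\omega(s,t) \leq 1$. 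Substituting the identity $B^{2;\alpha,\beta}_{s,t} = \int_s^t B^{\alpha}_{s,u}\, dB^{\beta}_u + \tfrac{1}{2}\langle B^{\alpha}, B^{\beta}\rangle_{s,t}$ from Proposition~\ref{representation of second level} cancels the $-\tfrac{1}{2} DV_{\beta} V_{\alpha}$ correction hidden inside $\widetilde{h}$, reducing the local estimate quasi-surely to $|Y_{s,t} - \widetilde{I}_{s,t}| \leq C \omega(s,t)^{\theta}$, where $\widetilde{I}_{s,t}$ is precisely as defined in (\ref{rough Taylor with Ito}).

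Second, I would globalize by summing $\widetilde{I}_{t_{k-1}^n, t_k^n}$ over the dyadic partition of $[0,t]$ (taking $t$ dyadic; the general case follows from quasi-sure continuity of $Y$). Super-additivity of $\omega$ yields
\[
\sum_k \omega(t_{k-1}^n, t_k^n)^{\theta} \leq \omega(0,1)^{\theta - 1} \max_k \omega(t_{k-1}^n, t_k^n),
\]
which vanishes quasi-surely as $n \to \infty$ by continuity of $\omega$ on the diagonal. The first three Riemann sums inside $\widetilde{I}$ converge in $L_G^2$ to $\int_0^t b(Y_u)\, du$, $\int_0^t h_{\alpha\beta}(Y_u)\, d\langle B^{\alpha}, B^{\beta}\rangle_u$ and $\int_0^t V_{\alpha}(Y_u)\, dB^{\alpha}_u$ respectively, by standard Riemann-sum approximations combined with the continuity of $Y$, while Lemma~\ref{zero limit} shows that the fourth sum tends to $0$ in $L_G^2$. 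Bridging the quasi-sure and $L_G^2$ convergences via Lemma~\ref{same limit}, I conclude that $Y$ satisfies the $G$-SDE (\ref{Ito in section 4}) quasi-surely in its integral form.

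Finally, I would invoke the uniqueness part of Peng's existence and uniqueness theorem for (\ref{Ito in section 4}) --- which applies once $Y \in \overline{M}_G^2(0,1; \mathbb{R}^N)$ is verified from standard a priori moment bounds and the boundedness of the coefficients --- to conclude $X_t = Y_t$ in $L_G^2$ for every $t$. A further application of Lemma~\ref{same limit}, combined with the quasi-sure continuity of both $X$ and $Y$, upgrades this pointwise equality in $L_G^2$ to quasi-sure equality of paths on $[0,1]$. The hard part throughout is the coexistence of two distinct convergence modes: the Taylor expansion and Proposition~\ref{representation of second level} produce pathwise quasi-sure inequalities, whereas the Riemann sums approximating the stochastic integrals live most naturally in $L_G^2$; reconciling them at each step requires Lemma~\ref{same limit}, together with sufficient control of the (random) function $\omega$ to ensure that the telescoping error vanishes quasi-surely uniformly along the dyadic summation.
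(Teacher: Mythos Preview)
Your outline is correct and follows the same route as the paper's proof: rough Taylor expansion for $Y$, substitution via Proposition~\ref{representation of second level}, dyadic summation with the super-additive control $\omega$, $L_G^2$ convergence of the Riemann sums plus Lemma~\ref{zero limit}, and finally Lemma~\ref{same limit} together with SDE uniqueness and quasi-sure continuity.

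The one place where you are vague and the paper does real work is the $L_G^2$ convergence of the sums $\sum_k V_\alpha(Y_{t_{k-1}^n})\Delta_k^n B^\alpha \to \int V_\alpha(Y_u)\,dB_u^\alpha$. This is not just ``continuity of $Y$'': one needs $V_\alpha(Y_\cdot)\in M_G^2(0,1)$, hence $\|Y\|_{p\text{-var};[0,1]}\in L_G^2(\Omega)$. Because $Y$ is constructed pathwise as an RDE solution rather than through an $L^2$ fixed point, there are no ``standard a priori moment bounds'' available directly; the paper recovers integrability from the rough path side by bounding $\|Y\|_{p\text{-var}}$ in terms of $\|\boldsymbol{B}\|_{p\text{-var}}$ (Theorem~10.14 in \cite{friz2010multidimensional}), then invoking Proposition~\ref{p-var control} to dominate $\|\boldsymbol{B}\|_{p\text{-var}}$ by $\rho_1(\boldsymbol{B}),\rho_2(\boldsymbol{B})$, and finally checking $\rho_1(\boldsymbol{B})^{2p},\rho_2(\boldsymbol{B})^{p}\in L_G^1(\Omega)$ via the moment inequality for $B_t$ and Proposition~\ref{representation of second level}. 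This step also supplies the verification that $Y\in\overline{M}_G^2(0,1;\mathbb{R}^N)$ needed for the uniqueness argument, so it is not a side issue.
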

\begin{proof}
Since the coefficients of the RDE (\ref{RDE section 4}) are in $C_{b}^{3}(\mathbb{R}^{N}),$
for quasi-surely define the following pathwise control: for $0\leqslant s<t\leqslant1,$
\[
\omega(s,t):=(\|V\|_{2,\infty}\|\boldsymbol{B}\|_{p-var\mbox{\ensuremath{;[s,t]}}})^{p}+\|b\|_{1,\infty}(t-s)+\|h-\frac{1}{2}DV\cdot V\|_{1,\infty}\|\langle B,B\rangle\|_{1-var;\ensuremath{[s,t]}},
\]
where $\|\cdot\|_{m,\infty}$ denotes the maximum of uniform norms
of derivatives up to order $m.$ It follows from the rough Taylor
expansion (Corollary 12.8 \cite{friz2010multidimensional}) that for
quasi-surely, there exists some positive constant $\theta>1,$ such
that for $0\leqslant s<t\leqslant1,$ when $\omega(s,t)\leqslant1,$
we have
\[
|Y_{s,t}-I_{s,t}|\leqslant C\omega(s,t)^{\theta},
\]
where
\begin{eqnarray*}
I_{s,t} & = & b(Y_{s})(t-s)+(h_{\alpha\beta}(Y_{s})-\frac{1}{2}DV_{\beta}(Y_{s})\cdot V_{\alpha}(Y_{s}))\langle B^{\alpha},B^{\beta}\rangle_{s,t}+V_{\alpha}(Y_{s})B_{s,t}^{1;\alpha}\\
 &  & +DV_{\beta}(Y_{s})\cdot V_{\alpha}(Y_{s})B_{s,t}^{2;\alpha,\beta}
\end{eqnarray*}
By Proposition \ref{representation of second level}, we have for
quasi-surely,
\begin{equation}
|Y_{s,t}-b(Y_{s})(t-s)-h_{\alpha\beta}(Y_{s})\langle B^{\alpha},B^{\beta}\rangle_{s,t}-V_{\alpha}(Y_{s})B_{s,t}^{1;\alpha}-DV_{\beta}(Y_{s})\cdot V_{\alpha}(Y_{s})\int_{s}^{t}B_{s,u}^{\alpha}dB_{u}^{\beta}|\leqslant C\omega(s,t)^{\theta}.\label{useful rough Taylor expansion}
\end{equation}

Now consider fixed $s<t$ being two dyadic points in $[0,1].$ When
$n$ is large enough, by applying inequality (\ref{useful rough Taylor expansion})
on each small dyadic interval $[t_{k-1}^{n},t_{k}^{n}]\subset[s,t]$
and summing up through the triangle inequality, we obtain that for
quasi-surely,
\begin{eqnarray*}
|Y_{s,t}-I_{s,t}^{n}| & \leqslant & C\sum\omega(t_{k-1}^{n},t_{k}^{n})^{\theta}\\
 & \leqslant & C\omega(s,t)\mbox{ max}\{\omega(t_{k-1}^{n},t_{k}^{n})^{\theta-1}:\ [t_{k-1}^{n},t_{k}^{n}]\subset[s,t]\},
\end{eqnarray*}
where 
\begin{eqnarray*}
I_{s,t}^{n} & = & \sum b(Y_{t_{k-1}^{n}})\Delta t^{n}+\sum h_{\alpha\beta}(Y_{t_{k-1}^{n}})\Delta_{k}^{n}\langle B^{\alpha},B^{\beta}\rangle+\sum V_{\alpha}(Y_{t_{k-1}^{n}})\Delta_{k}^{n}B^{\alpha}\\
 &  & +\sum DV_{\beta}(Y_{t_{k-1}^{n}})\cdot V_{\alpha}(Y_{t_{k-1}^{n}})\int_{t_{k-1}^{n}}^{t_{k}^{n}}B_{t_{k-1}^{n},u}^{\alpha}dB_{u}^{\beta},
\end{eqnarray*}
and each sum is over all $k$ such that $[t_{k-1}^{n},t_{k}^{n}]\subset[s,t].$
It follows that for quasi-surely, 
\[
I_{s,t}^{n}\rightarrow Y_{s,t},\ \mbox{\ensuremath{n\rightarrow\infty.}}
\]

On the other hand, the following convergence in $L_{G}^{2}(\Omega;\mathbb{R}^{N})$
holds: 
\begin{eqnarray*}
\sum b(Y_{t_{k-1}^{n}})\Delta t^{n} & \rightarrow & \int_{s}^{t}b(Y_{u})du,\\
\sum h_{\alpha\beta}(Y_{t_{k-1}^{n}})\Delta_{k}^{n}\langle B^{\alpha},B^{\beta}\rangle & \rightarrow & \int_{s}^{t}h_{\alpha\beta}(Y_{u})d\langle B^{\alpha},B^{\beta}\rangle_{u},\\
\sum V_{\alpha}(Y_{t_{k-1}^{n}})\Delta_{k}^{n}B^{\alpha} & \rightarrow & \int_{s}^{t}V_{\alpha}(Y_{u})dB_{u}^{\alpha},
\end{eqnarray*}
as $n\rightarrow\infty.$ 

The reason is the following. For simplicity we only consider the third
one, as the first two are similar (and in fact easier). It is straight
forward that 
\begin{align*}
 & \int_{0}^{1}|V_{\alpha}(Y_{t})-\sum_{k=1}^{2^{n}}V_{\alpha}(Y_{t_{k-1}^{n}})\mathbf{1}_{[t_{k-1}^{n},t_{k}^{n})}(t)|^{2}dt\\
= & \sum_{k=1}^{2^{n}}\int_{t_{k-1}^{n}}^{t_{k}^{n}}|V_{\alpha}(Y_{t})-V_{\alpha}(Y_{t_{k-1}^{n}})|^{2}dt\\
\leqslant & C\sum_{k=1}^{2^{n}}\int_{t_{k-1}^{n}}^{t_{k}^{n}}|Y_{t}-Y_{t_{k-1}^{n}}|^{2}dt
\end{align*}
\begin{eqnarray*}
 & \leqslant & C\sum_{k=1}^{2^{n}}\|Y\|_{p-var\mbox{;}[t_{k-1}^{n},t_{k}^{n}]}^{2}\Delta t^{n}\\
 & \leqslant & C(\sum_{k=1}^{2^{n}}\|Y\|_{p-var\ensuremath{;[t_{k-1}^{n},t_{k}^{n}]}}^{p}\Delta t^{n})^{\frac{2}{p}}\\
 & \leqslant & C(\Delta t^{n})^{\frac{2}{p}}\|Y\|_{p-var;[0,1]}^{2},
\end{eqnarray*}
where $C$ depends only on $V_{\alpha}.$ Therefore, it suffices to
show that $\|Y\|_{p-var;[0,1]}\in L_{G}^{2}(\Omega),$ as it will
imply the $G$-It$\hat{\mbox{o}}$ integrability of $V_{\alpha}(Y_{t})$
and the desired convergence in $L_{G}^{2}(\Omega;\mathbb{R}^{N})$
will hold. For simplicity we assume that $Y_{t}$ is the solution
of the following RDE
\[
dY_{t}=V_{\alpha}(Y_{t})dB_{t}^{\alpha}
\]
with $Y_{0}=\xi$ (there is no substantial difference because $dt$
and $d\langle B^{\alpha},B^{\beta}\rangle_{t}$ are more regular than
$dB_{t}$), then by Theorem 10.14 in \cite{friz2010multidimensional},
we know that 
\[
\|Y\|_{p-var;[0,1]}\leqslant C\|\boldsymbol{B}\|_{p-var;[0,1]}\vee\|\boldsymbol{B}\|_{p-var;[0,1]}^{p}.
\]
Therefore, we only need to show that $\|\boldsymbol{B}\|_{p-var;[0,1]}^{p}\in L_{G}^{2}(\Omega).$
For this purpose, we use Proposition \ref{p-var control} to control
the $p$-variation norm by the functions $\rho_{1},\rho_{2}$ defined
in (\ref{def rho}). It follows that
\[
\|\boldsymbol{B}\|_{p-var}\leqslant C(1+\rho_{1}(\boldsymbol{B})^{2}+\rho_{2}(\boldsymbol{B})).
\]
Therefore, it remains to show that $\rho_{1}(\boldsymbol{B})^{2p},\rho_{2}(\boldsymbol{B})^{p}\in L_{G}^{1}(\Omega).$
First consider level one. By the distribution of $B_{t},$ we have
\begin{eqnarray*}
\|\sum_{n=1}^{\infty}n^{\gamma}\sum_{k=1}^{2^{n}}|B_{t_{k-1}^{n},t_{k}^{n}}^{1}|^{p}\|_{2} & \leqslant & \sum_{n=1}^{\infty}n^{\gamma}\sum_{k=1}^{2^{n}}\||B_{t_{k-1}^{n},t_{k}^{n}}^{1}|^{p}\|_{2}\\
 & \leqslant & \sum_{n=1}^{\infty}n^{\gamma}(\Delta t^{n})^{\frac{p}{2}-1}\\
 & < & \infty,
\end{eqnarray*}
and we know that $\rho_{1}(\boldsymbol{B})^{2p}\in L_{G}^{1}(\Omega).$
Now consider level two. By Proposition \ref{representation of second level}
and the distribution of $B_{t}$ and $\langle B,B\rangle_{t}$, we
have 
\begin{eqnarray*}
\|\sum_{n=1}^{\infty}n^{\gamma}\sum_{k=1}^{2^{n}}|B_{t_{k-1}^{n},t_{k}^{n}}^{2}|^{\frac{p}{2}}\|_{2} & = & \|\sum_{n=1}^{\infty}n^{\gamma}\sum_{k=1}^{2^{n}}|\int_{t_{k-1}^{n}}^{t_{k}^{n}}B_{t_{k-1}^{n},u}\otimes dB_{u}+\frac{1}{2}\langle B,B\rangle_{t_{k-1}^{n},t_{k}^{n}}|^{\frac{p}{2}}\|_{2}\\
 & \leqslant & \sum_{n=1}^{\infty}n^{\gamma}\sum_{k=1}^{2^{n}}\||\int_{t_{k-1}^{n}}^{t_{k}^{n}}B_{t_{k-1}^{n},u}\otimes dB_{u}+\frac{1}{2}\langle B,B\rangle_{t_{k-1}^{n},t_{k}^{n}}|^{\frac{p}{2}}\|_{2}\\
 & \leqslant & C\sum_{n=1}^{\infty}n^{\gamma}(\Delta t^{n})^{\frac{p}{2}-1}\\
 & < & \infty.
\end{eqnarray*}
It follows that $\rho_{2}(\boldsymbol{B})^{p}\in L_{G}^{1}(\Omega).$
Therefore, the desired $L_{G}^{2}$-convergence holds. 

In addition, by Lemma \ref{zero limit} we also have the following
$L_{G}^{2}$-convergence:
\[
\sum DV_{\beta}(Y_{t_{k-1}^{n}})\cdot V_{\alpha}(Y_{t_{k-1}^{n}})\int_{t_{k-1}^{n}}^{t_{k}^{n}}B_{t_{k-1}^{n},u}^{\alpha}dB_{u}^{\beta}\rightarrow0,\ n\rightarrow\infty.
\]

Consequently, in $L_{G}^{2}(\Omega;\mathbb{R}^{N}),$
\[
I_{s,t}^{n}\rightarrow\int_{s}^{t}b(Y_{u})du+\int_{s}^{t}h_{\alpha\beta}(Y_{u})d\langle B^{\alpha},B^{\beta}\rangle_{u}+\int_{s}^{t}V_{\alpha}(Y_{u})dB_{u}^{\alpha},
\]
as $n\rightarrow\infty.$

From Lemma \ref{same limit}, we conclude that for quasi-surely,
\[
Y_{s,t}=\int_{s}^{t}b(Y_{u})du+\int_{s}^{t}h_{\alpha\beta}(Y_{u})d\langle B^{\alpha},B^{\beta}\rangle_{u}+\int_{s}^{t}V_{\alpha}(Y_{u})dB_{u}^{\alpha}.
\]
Since $X_{t}$ and $Y_{t}$ are both quasi-surely continuous, it follows
that $X$ coincides with $Y$ quasi-surely.
\end{proof}

\begin{rem}
As we mentioned at the beginning of Section 2, it is possible to prove
Theorem \ref{relation} by establishing the Wong-Zakai type approximation.
More precisely, if we let $X_{t}^{n}$ to be the Euler-Maruyama approximation
of the SDE (\ref{Ito in section 4}) and let $Y_{t}^{n}$ to be the
unique classical solution of the following ODE:
\[
dY_{t}^{n}=b(Y_{t}^{n})dt+(h_{\alpha\beta}(Y_{t}^{n})-\frac{1}{2}DV_{\beta}(Y_{t}^{n})\cdot V_{\alpha}(Y_{t}^{n}))d\langle B^{\alpha},B^{\beta}\rangle_{t}+V_{\alpha}(Y_{t}^{n})d(B^{n})_{t}^{\alpha}
\]
with $X_{0}^{n}=Y_{0}^{n}=\xi,$ where $B_{t}^{n}$ is the dyadic
piecewise linear approximation of $B_{t},$ then by using our main
result in Section 2 and establishing related $L_{G}^{2}$-estimates,
we can prove that 
\[
\sup_{t\in[0,1]}\mathbb{E}^{G}[|X_{t}^{n}-Y_{t}^{n}|^{2}]\leqslant C\sqrt{1+\xi^{2}}(\Delta t^{n})^{\frac{1}{2}}.
\]
In other words, $Y_{t}^{n}$ converges to the solution $X_{t}$ of
the SDE (\ref{Ito in section 4}) in the $L_{G}^{2}$-sense. However,
we know that for quasi-surely, $Y_{t}^{n}$ converges uniformly to
the solution $Y_{t}$ of the RDE(\ref{RDE section 4}). Again by Lemma
\ref{same limit} and continuity, we conclude that for quasi-sure,
$X$ coincides with $Y.$

From the above discussion, if we forget about the RDE (\ref{RDE section 4})
and only consider the $L_{G}^{2}$-limit of $Y_{t}^{n},$ it seems
that there is nothing to do with rough paths at all as everything
is well-defined in the classical sense. However, the fundamental point
of understanding the convergence of $Y_{t}^{n}$ in the pathwise sense
lies in the crucial fact that $B_{t}$ can be regarded as geometric
rough paths (i.e., the enhancement defined in Section 3) with approximating
sequence in $G\Omega_{p}(\mathbb{R}^{d})$ being the enhancement of
the natural dyadic piecewise linear approximation $B_{t}^{n}.$ This
is exactly what the universal limit theorem tells us.
\end{rem}

\begin{rem}
From the RDE point of view, it is possible to reduce the regularity
assumptions on the coefficients. In particular, since the regularity
of $t$ and $\langle B^{\alpha},B^{\beta}\rangle_{t}$ are both ``better''
than $B_{t},$ the regularity assumptions on the coefficients of $dt$
and $d\langle B^{\alpha},B^{\beta}\rangle_{t}$ can be weaker than
the one imposed on the coefficient of $dB_{t}.$ However, we are not
going to present the results under such generality. Please refer to
\cite{friz2010multidimensional} for general existence and uniqueness
results of RDEs.
\end{rem}

\section{SDEs on a Differentiable Manifold Driven by $G$-Brownian Motion}

Our main result in Section 5 can be used to establish SDEs on a differentiable
manifold driven by $G$-Brownian motion, which will be the main focus
of this section. The development is based on the idea in the classical
case, for which one may refer to \cite{elworthy1998stochastic}, \cite{hsu2002stochastic},
\cite{ikeda1989stochastic}. This part is the foundation of developing
$G$-Brownian motion on a Riemannina manifold in the next section.

In classical stochastic analysis, SDEs on a manifold is established
under the Stratonovich type formulation, which can be regarded as
a pathwise approach. The reason of using Stratonovich type formulation
instead of the It$\hat{\mbox{o}}$ type one is the following. First
of all, the notion of SDE can be introduced by using test functions
on the manifold from an intrinsic point of view, which is consistent
with ordinary differential calculus and invariant under diffeomorphisms.
Moreover, when we construct solutions extrinsically, we can prove
that for almost surely, the solution of the extended SDE which starts
from the manifold will always live on it. This reveals the intrinsic
nature of ordinary differential equations. 

In the setting of $G$-expectation, we will adopt the same idea for
the development. However, there is a major difficulty here. The method
of constructing solutions in the classical case from the extrinsic
point of view depends heavily on the localization technique, which
is not available in the setting of $G$-expectation, mainly due to
the reason that concepts of information flows and stopping times are
not well understood. To get around with this difficulty, we will use
our main result in Section 5 to obtain a pathwise construction. The
advantage of such approach is that we can still use localization arguments
but don't need to care about measurability and integrability under
$G$-expectation.

Now assume that $M$ is a differentiable manifold. For technical reasons
we further assume that $M$ is compact (it is not necessary if we
impose more restrictive regularity assumptions on the generating vector
fields). Let $\{b,h_{\alpha\beta},V_{\alpha}:\alpha,\beta=1,2,\cdots,d\}$
be a family of $C^{3}$-vector fields on $M,$ and let $B_{t}$ be
the canonical $d$-dimensional $G$-Brownian motion on the path space
$(\Omega,L_{G}^{2}(\Omega),\mathbb{E}^{G})$, where $G$ is a function
given by (\ref{G representation}).

Consider the following symbolic Stratonovich type SDE over $[0,1]$:
\begin{equation}
\begin{cases}
dX_{t} & =b(X_{t})dt+h_{\alpha\beta}(X_{t})d\langle B^{\alpha},B^{\beta}\rangle_{t}+V_{\alpha}(X_{t})\circ dB_{t}^{\alpha},\\
X_{0} & =\xi\in M,
\end{cases}\label{Stratonovich SDE on manifold}
\end{equation}
on $M$. 
\begin{defn}
\label{definition of solution on manifold} A solution $X_{t}$ of
the SDE (\ref{Stratonovich SDE on manifold}) is an $M$-valued continuous
stochastic process such that for any $f\in C^{\infty}(M),$ 
\[
\{h_{\alpha\beta}f(X_{t}):t\in[0,1]\}\in M_{G}^{1}(0,1),\ \{V_{\alpha}f(X_{t}):t\in[0,1]\}\in M_{G}^{2}(0,1),\ \forall\alpha,\beta=1,2,\cdots,d,
\]
and the following equality holds on $[0,1]:$ 
\begin{equation}
f(X_{t})=f(\xi)+\int_{0}^{t}bf(X_{s})ds+\int_{0}^{t}h_{\alpha\beta}f(X_{s})d\langle B^{\alpha},B^{\beta}\rangle_{s}+\int_{0}^{t}V_{\alpha}f(X_{s})\circ dB_{s}^{\alpha},\label{integral form of Stratonovich}
\end{equation}
where the last term is defined as 
\[
\int_{0}^{t}V_{\alpha}f(X_{s})\circ dB_{s}^{\alpha}:=\int_{0}^{t}V_{\alpha}f(X_{s})dB_{s}^{\alpha}+\frac{1}{2}\int_{0}^{t}V_{\beta}V_{\alpha}f(X_{s})d\langle B^{\alpha},B^{\beta}\rangle_{s}.
\]

\end{defn}

\begin{rem}
Definition \ref{definition of solution on manifold} is intrinsic.
It is easy to see that Definition \ref{definition of solution on manifold}
is consistent with the Euclidean case. 
\end{rem}

Now we are going to construct the solution of (\ref{Stratonovich SDE on manifold})
from the extrinsic point of view.

According to the Whitney  embedding theorem (see \cite{derham1984riemannian}),
$M$ can be embedded into some ambient Euclidean space $\mathbb{R}^{N}$
as a submanifold such that the image $i(M)$ of $M$ is closed in
$\mathbb{R}^{N}.$ We simply regard $M$ as a subset of $\mathbb{R}^{N}.$ 

Let $F^{1},\cdots,F^{N}\in C^{\infty}(M)$ be the coordinate functions
on $M.$ The following result is easy to prove. It is similar to the
classical case.
\begin{prop}
\label{equivalent condition}$X_{t}$ is a solution of (\ref{Stratonovich SDE on manifold})
if and only of for any $i=1,2,\cdots,N,$ $\alpha,\beta=1,2,\cdots,d,$
\[
\{h_{\alpha\beta}F^{i}(X_{t}):t\in[0,1]\}\in M_{G}^{1}(0,1),\ \{V_{\alpha}F^{i}(X_{t}):t\in[0,1]\}\in M_{G}^{2}(0,1),
\]
and 
\begin{equation}
F^{i}(X_{t})=F^{i}(\xi)+\int_{0}^{t}bF^{i}(X_{s})ds+\int_{0}^{t}h_{\alpha\beta}F^{i}(X_{s})d\langle B^{\alpha},B^{\beta}\rangle_{s}+\int_{0}^{t}V_{\alpha}F^{i}(X_{s})\circ dB_{s}^{\alpha},\ \forall t\in[0,1].\label{coordinates}
\end{equation}
\end{prop}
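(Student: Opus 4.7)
The plan is to reduce the intrinsic equation for arbitrary $f\in C^{\infty}(M)$ to the system of $N$ coordinate equations via a Whitney-type extension of $f$ to the ambient $\mathbb{R}^{N}$, together with a direct application of the multidimensional $G$-It\^o formula. The forward implication is immediate: each coordinate function $F^{i}$ lies in $C^{\infty}(M)$, so taking $f=F^{i}$ in Definition \ref{definition of solution on manifold} instantly yields (\ref{coordinates}) and the required integrability of $h_{\alpha\beta}F^{i}(X_{\cdot})$ and $V_{\alpha}F^{i}(X_{\cdot})$. All the work is in the reverse direction.

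For the reverse direction, fix $f\in C^{\infty}(M)$. Because $M$ is a compact embedded submanifold of $\mathbb{R}^{N}$, there exists a compactly supported extension $\tilde{f}\in C_{c}^{\infty}(\mathbb{R}^{N})$ with $\tilde{f}|_{M}=f$. Set $Y_{t}^{i}:=F^{i}(X_{t})$, so that $Y_{t}$ is an $\mathbb{R}^{N}$-valued continuous process taking values in the compact set $M$ and $f(X_{t})=\tilde{f}(Y_{t})$. Using the very definition of the Stratonovich integral in Definition \ref{definition of solution on manifold} to recast (\ref{coordinates}) in It\^o form shows that
\begin{equation*}
Y_{t}^{i}=Y_{0}^{i}+\int_{0}^{t}bF^{i}(X_{s})\,ds+\int_{0}^{t}\bigl(h_{\alpha\beta}F^{i}+\tfrac{1}{2}V_{\beta}V_{\alpha}F^{i}\bigr)(X_{s})\,d\langle B^{\alpha},B^{\beta}\rangle_{s}+\int_{0}^{t}V_{\alpha}F^{i}(X_{s})\,dB_{s}^{\alpha}.
\end{equation*}
Applying the $G$-It\^o formula (see \cite{peng2010nonlinear}) to $\tilde{f}(Y_{t})$, and using $d\langle Y^{i},Y^{j}\rangle_{t}=V_{\alpha}F^{i}(X_{t})V_{\beta}F^{j}(X_{t})\,d\langle B^{\alpha},B^{\beta}\rangle_{t}$, one obtains an expansion whose $dt$, $dB^{\alpha}$, and $d\langle B^{\alpha},B^{\beta}\rangle$ coefficients I next identify intrinsically.

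The identification uses the chain rule $Wg(x)=\partial_{i}\tilde{g}(F(x))\,WF^{i}(x)$, valid for any vector field $W$ on $M$ and any $g=\tilde{g}|_{M}$, together with its Leibniz iteration
\begin{equation*}
V_{\beta}V_{\alpha}f=\partial_{i}\tilde{f}(F)\,V_{\beta}V_{\alpha}F^{i}+\partial_{ij}\tilde{f}(F)\,V_{\alpha}F^{i}\,V_{\beta}F^{j}.
\end{equation*}
The $dt$ and $dB^{\alpha}$ pieces of the It\^o expansion collapse to $bf(X_{t})\,dt$ and $V_{\alpha}f(X_{t})\,dB_{t}^{\alpha}$, while the two $d\langle B^{\alpha},B^{\beta}\rangle$ contributions (from $\partial_{i}\tilde{f}$ and from the It\^o correction $\tfrac{1}{2}\partial_{ij}\tilde{f}$) combine via this Leibniz identity into $\bigl(h_{\alpha\beta}f+\tfrac{1}{2}V_{\beta}V_{\alpha}f\bigr)(X_{t})\,d\langle B^{\alpha},B^{\beta}\rangle_{t}$. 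Converting the resulting It\^o identity back to Stratonovich form via Definition \ref{definition of solution on manifold} gives exactly (\ref{integral form of Stratonovich}). The integrability requirements $h_{\alpha\beta}f(X_{\cdot})\in M_{G}^{1}(0,1)$ and $V_{\alpha}f(X_{\cdot})\in M_{G}^{2}(0,1)$ are automatic, since on the compact manifold $M$ all the relevant smooth functions are bounded, so their compositions with the continuous adapted process $X_{t}$ belong to every $M_{G}^{p}$-space. The main obstacle is really just the bookkeeping of the two Stratonovich--It\^o conversions: the coordinate equation must be switched to It\^o before the $G$-It\^o formula is applicable, and then the Leibniz expansion above is precisely what makes the Stratonovich correction $\tfrac{1}{2}V_{\beta}V_{\alpha}f$ reappear so that one can switch back cleanly.
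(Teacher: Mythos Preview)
Your proof is correct and follows essentially the same route as the paper: extend $f$ to a compactly supported $\tilde f\in C^{\infty}(\mathbb{R}^{N})$, apply the $G$-It\^o formula to $\tilde f(F^{1}(X_{t}),\ldots,F^{N}(X_{t}))$, and use the chain rule $Vf=\partial_{i}\tilde f\cdot VF^{i}$ to identify the result with (\ref{integral form of Stratonovich}). The only difference is presentational: the paper applies the It\^o formula directly in Stratonovich form (relying on the fact that Stratonovich differentials obey the ordinary chain rule), whereas you make the two It\^o--Stratonovich conversions explicit and carry out the Leibniz bookkeeping for the second-order term by hand; your version is thus a more detailed unpacking of the same argument.
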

\begin{proof}
Necessity is obvious since $F^{i}\in C^{\infty}(M)$ for any $i=1,2,\cdots,N.$ 

Now consider sufficiency. Let $f\in C^{\infty}(M),$ and choose a
$C^{\infty}$-extension $\widetilde{f}$ of $f$ with compact support
in $\mathbb{R}^{N}$ (it is possible since $M$ is compact). Then
for any $x\in M,$
\[
f(x)=\widetilde{f}(F^{1}(x),\cdots,F^{N}(x)),
\]
and thus 
\[
f(X_{t})=\widetilde{f}(F^{1}(X_{t}),\cdots,F^{N}(X_{t})),\ \forall t\in[0,1].
\]
Since $M$ is compact and $\widetilde{f}$ is smooth with compact
support, it follows from the $G$-It$\hat{\mbox{o}}$ formula that
for $t\in[0,1],$ 
\begin{eqnarray*}
\widetilde{f}(F^{1}(X_{t}),\cdots,F^{N}(X_{t})) & = & f(\xi)+\int_{0}^{t}\frac{\partial\widetilde{f}}{\partial y^{i}}(bF^{i}(X_{s})ds+h_{\alpha\beta}F^{i}(X_{s})d\langle B^{\alpha},B^{\beta}\rangle_{s}\\
 &  & +V_{\alpha}F^{i}(X_{s})\circ dB_{s}^{\alpha})\\
 & = & f(\xi)+\int_{0}^{t}(bf(X_{s})ds+h_{\alpha\beta}f(X_{s})d\langle B^{\alpha},B^{\beta}\rangle_{s}+V_{\alpha}f(X_{s})\circ dB_{s}^{\alpha}),
\end{eqnarray*}
where we have used the simple fact that for any $C^{1}$-vector field
$V$ on $M,$ 
\[
Vf=\sum_{i=1}^{N}\frac{\partial\widetilde{f}}{\partial y^{i}}VF^{i}.
\]
By Definition \ref{definition of solution on manifold}, we know that
$X_{t}$ is a solution of the SDE (\ref{Stratonovich SDE on manifold}).
\end{proof}

Now we are going to prove the existence and uniqueness of (\ref{Stratonovich SDE on manifold})
by using the main result of Section 5, namely, a pathwise approach
based on the associated RDE.

Let $\widetilde{b},\widetilde{h}_{\alpha\beta},\widetilde{V}_{\alpha}$
be $C_{b}^{3}$-extensions (not unique) of the vector fields $b,h_{\alpha\beta},V_{\alpha}$.
Consider the following Stratonovich type SDE in the ambient space
$\mathbb{R}^{N}:$
\begin{equation}
dX_{t}=\widetilde{b}(X_{t})dt+\widetilde{h}_{\alpha\beta}(X_{t})d\langle B^{\alpha},B^{\beta}\rangle_{t}+\widetilde{V}_{\alpha}(X_{t})\circ dB_{t}^{\alpha}\label{extended Stratonovich SDE}
\end{equation}
with $X_{0}=x\in\mathbb{R}^{N},$ which is interpreted as the following
It$\hat{\mbox{o}}$ type SDE:
\[
dX_{t}=\widetilde{b}(X_{t})dt+(\widetilde{h}_{\alpha\beta}(X_{t})+\frac{1}{2}D\widetilde{V}_{\alpha}(X_{t})\cdot\widetilde{V}_{\beta}(X_{t}))d\langle B^{\alpha},B^{\beta}\rangle_{t}+\widetilde{V}_{\alpha}(X_{t})dB_{t}^{\alpha}.
\]
According to Section 5, we can alternatively interpret (\ref{extended Stratonovich SDE})
as an RDE which is pathwisely defined. Both the SDE and the RDE has
a unique solution, and according to Theorem \ref{relation} they coincide
quasi-surely. Our aim is to show that for quasi-surely, the solution
$X_{t}$ of (\ref{extended Stratonovich SDE}) never leaves $M$ and
it is the unique solution of (\ref{Stratonovich SDE on manifold}). 

The following result is important to prove the existence and uniqueness
of the SDE (\ref{Stratonovich SDE on manifold}) on the manifold $M$. 
\begin{prop}
\label{ODE on manifolds}Let $x_{t}$ be a path of bounded variation
in $\mathbb{R}^{d}$. Let $W_{1},\cdots,W_{d}$ be a family of $C^{1}$-vector
fields on $M$ and $\widetilde{W}_{1},\cdots,\widetilde{W}_{d}$ be
their $C_{b}^{1}$-extensions to $\mathbb{R}^{N}.$ Consider the following
ODE in the ambient space $\mathbb{R}^{N}$ over $[0,1]:$
\begin{equation}
dy_{t}=\widetilde{W}_{\alpha}(y_{t})dx_{t}^{\alpha}\label{ODE}
\end{equation}
with $y_{0}=x\in M.$ Then the solution $y_{t}\in M$ for all $t\in[0,1].$
Moreover, $y_{t}$ does not depend on extensions of the vector fields.\end{prop}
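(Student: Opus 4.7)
The strategy is to reduce to the classical fact that a $C^1$ vector field on $\mathbb{R}^N$ whose restriction to $M$ is tangent to $M$ generates a flow preserving $M$, and then to pass from smooth driving paths to bounded variation driving paths by approximation and continuous dependence.

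First, since $\widetilde W_\alpha \in C_b^1(\mathbb{R}^N)$ and $x$ is continuous of bounded variation, the Riemann--Stieltjes equation (\ref{ODE}) admits a unique global solution $y \in C([0,1];\mathbb{R}^N)$, and by a direct Gr\"onwall argument on the integral form one has continuous dependence on the driving path: if $x^n \to x$ uniformly with $\sup_n \|x^n\|_{1-var;[0,1]} < \infty$ and $y_0^n = y_0$, then the corresponding solutions $y^n$ converge to $y$ uniformly on $[0,1]$.

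Next I approximate $x$ by its dyadic piecewise linear interpolants $x^n$, so that $x^n \to x$ uniformly with $\sup_n\|x^n\|_{1-var;[0,1]} \leqslant \|x\|_{1-var;[0,1]}$. On each linear piece the equation reduces to the classical smooth ODE $\dot y^n_t = \dot x^{n,\alpha}_t \widetilde W_\alpha(y^n_t)$, driven by a vector field whose restriction to $M$ equals $\dot x^{n,\alpha}_t W_\alpha$ and is therefore tangent to $M$. The standard invariance theorem---readily proved in slice charts in which $M$ locally corresponds to $\{z^{m+1}=\cdots=z^N=0\}$, so that the last $N-m$ components of any tangent vector field vanish on $M$---then implies that the flow preserves $M$; concatenating the pieces gives $y^n_t \in M$ for every $t \in [0,1]$, and closedness of $M$ together with $y^n \to y$ uniformly forces $y_t \in M$ for all $t$.

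Independence of the extension follows at once from uniqueness: for another $C_b^1$-extension $\widetilde W'_\alpha$ the solution $y'$ also stays in $M$ by the same argument, and on $M$ we have $\widetilde W'_\alpha = W_\alpha = \widetilde W_\alpha$, so $y'$ solves (\ref{ODE}) with the original extension and uniqueness in $\mathbb{R}^N$ gives $y = y'$. The main technical point is the invariance statement in the smooth case; as an alternative to the slice-chart approach one may apply Gr\"onwall to $\phi(t) := d(y^n_t, M)^2$, with $d$ the Euclidean distance to $M$, exploiting that near $M$ one has $\nabla\phi(y) \cdot \widetilde W_\alpha(y) = O(\phi(y))$ because $\nabla\phi(y)$ is normal to $M$ at the nearest point projection while $\widetilde W_\alpha$ is tangent to $M$ there. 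Either route is routine and delivers the required invariance.
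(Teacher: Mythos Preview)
Your argument is correct, but the paper takes a more direct route that avoids both the approximation step and the continuous-dependence lemma. The paper works with the bounded-variation driver $x$ from the outset: setting $F(y)=d(y,M)^2$ (smoothed by a cutoff), it observes that $|\widetilde W_\alpha F|\leqslant C F$ in a neighbourhood of $M$ because $\widetilde W_\alpha$ is tangent to $M$ along $M$, and then applies the chain rule for Riemann--Stieltjes integrals to get $F(y_t)=\int_0^t \widetilde W_\alpha F(y_s)\,dx_s^\alpha$, hence $F(y_t)\leqslant C\int_0^t F(y_s)\,d|x|_s$ up to the exit time from the neighbourhood. Iterating (Gr\"onwall with respect to $d|x|$) gives $F(y_t)\equiv 0$, so $y_t\in M$ directly. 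You essentially describe this very argument in your last paragraph, but only as an alternative for the \emph{smooth} approximants $y^n$; the paper's point is that the same computation already works for the original bounded-variation driver, so the detour through piecewise-linear approximation and stability is unnecessary.

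What your approach buys is modularity---it reduces everything to two standard black boxes (smooth-ODE invariance and stability of controlled ODEs under uniform convergence with bounded $1$-variation)---at the cost of having to justify the stability lemma, which requires an integration-by-parts argument to handle $\int \widetilde W_\alpha(y_s)\,d(x_s^n-x_s)$. The paper's approach is shorter and self-contained. Both arrive at the independence-of-extension conclusion the same way, via uniqueness once $y_t\in M$ is known.
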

\begin{proof}
Let $F(x):=d(x,M)^{2}$ be the squared distance function to the submanifold
$M.$ It follows that $ $$F$ is smooth in an open neighborhood of
$M$. By using the cut-off function we may assume that $F\in C_{b}^{\infty}(M)$.
Now we are able to choose an open neighborhood $U$ of $M$, such
that for any $x\in U,$ $F(x)=0$ if and only if $x\in M$. Moreover,
since $\widetilde{W}_{\alpha}$ ($\alpha=1,2,\cdots,d$) are tangent
vector fields of $M$ when restricted on $M,$ $U$ can be chosen
such that for any $x\in U$ and $\alpha=1,2,\cdots,d,$ 
\begin{equation}
|\widetilde{W}_{\alpha}F(x)|\leqslant CF(x),\label{WF<CF}
\end{equation}
for some positive constant $C$ depending on $U.$ The function $F(x)$
was used in \cite{hsu2002stochastic} to construct SDEs on $M$ driven
by classical Brownian motion.

Since $x_{t}$ is a path of bounded variation and $y_{0}=\xi\in M$,
by the change of variables formula in ordinary calculus, we have
\[
F(y_{t})=\int_{0}^{t}\widetilde{W}_{\alpha}F(y_{s})dx_{s}^{\alpha},\ \forall t\in[0,1].
\]
Define $\tau:=\inf\{t\in[0,1]:\ y_{t}\notin U\}.$ It follows from
(\ref{WF<CF}) that 
\[
F(y_{t})\leqslant C\int_{0}^{t}F(y_{s})d|x|_{s},\ \forall t\in[0,\tau],
\]
where $|x|_{t}$ is the total variation of the path $x_{t}.$ 

By iteration and Fubini theorem, on $[0,\tau]$ we have 
\begin{eqnarray*}
F(y_{t}) & \leqslant & C^{2}\int_{0}^{t}(\int_{0}^{s}F(y_{u})d|x|_{u})d|x|_{s}\\
 & = & C^{2}\int_{0}^{t}(|x|_{t}-|x|_{s})F(y_{s})d|x|_{s}.
\end{eqnarray*}
By induction, it is easy to see that for any $k\geqslant1,$
\[
F(y_{t})\leqslant C^{k}\int_{0}^{t}\frac{(|x|_{t}-|x|_{s})^{k-1}}{(k-1)!}F(y_{s})d|x|_{s},\ \forall t\in[0,\tau].
\]
Since $F$ is bounded, we obtain further that for any $k\geqslant1,$
\[
F(y_{t})\leqslant\|F\|_{\infty}\frac{C^{k}(|x|_{t}-|x|_{0})^{k}}{k!},\ \forall t\in[0,\tau].
\]
By letting $k\rightarrow\infty,$ it follows that $F(y_{t})\equiv0$
on $[0,\tau]$, which implies that $y_{t}\in M$ for any $t\in[0,\tau].$
Since $y_{t}$ is continuous, the only possibility is that $y_{t}$
never leaves $M$ on $[0,1].$

If we rewrite the ODE (\ref{ODE}) in its integral form:
\begin{equation}
y_{t}=\xi+\int_{0}^{t}\widetilde{W}_{\alpha}(y_{s})dx_{s}^{\alpha},\ t\in[0,1],\label{integral form}
\end{equation}
we know from previous discussion that equation (\ref{integral form})
depends only on the values of $\widetilde{W}_{\alpha}$ on $M,$ that
is, of $W_{\alpha}$ ($\alpha=1,2,\cdots,d$). In other words, if
$\hat{W}_{\alpha}$ is another extension of $W_{\alpha}$ and $\widehat{y}_{t}$
is the solution of the corresponding ODE with the same initial condition,
$\widehat{y}_{t}$ is also a solution of (\ref{ODE}). By uniqueness,
we have $y=\widehat{y}.$ Therefore, $y_{t}$ does not depend on extensions
of the vector fields.
\end{proof}

With the help of Proposition \ref{ODE on manifolds}, we can prove
the following existence and uniqueness result.
\begin{thm}
Let $b,h_{\alpha\beta},V_{\alpha}$ be $C^{3}$-vector fields on $M.$
Then the Stratonovich type SDE (\ref{Stratonovich SDE on manifold})
has a solution $X_{t}$ which is unique quasi-surely.\end{thm}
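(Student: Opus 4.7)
The plan is to lift the problem to the ambient Euclidean space via the Whitney embedding, solve the extended Stratonovich SDE there by invoking Theorem \ref{relation}, and then use the approximation construction of the RDE solution together with Proposition \ref{ODE on manifolds} to show that the resulting process quasi-surely lives on $M$. Specifically, by compactness of $M$ I would first extend the vector fields $b,h_{\alpha\beta},V_\alpha$ to $C_b^3$ vector fields $\widetilde{b},\widetilde{h}_{\alpha\beta},\widetilde{V}_\alpha$ on $\mathbb{R}^N$, and let $X_t\in\overline{M}_G^2(0,1;\mathbb{R}^N)$ be the unique $L_G^2$-solution of the extended Stratonovich SDE (\ref{extended Stratonovich SDE}) in its It\^o-reformulated version (with the correction $\frac{1}{2}D\widetilde{V}_\beta\cdot\widetilde{V}_\alpha$ absorbed into the $d\langle B^\alpha,B^\beta\rangle_t$ coefficient). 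By Theorem \ref{relation}, $X_t$ coincides quasi-surely with the unique pathwise solution $Y_t$ of the associated RDE, and the construction in Section 4 realizes $Y_t$ as the quasi-sure limit in the $p$-variation topology of the classical ODE solutions $Y_t^n$ obtained when $B_t$ is replaced by its dyadic piecewise linear approximation $B_t^n$.

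The key geometric step is manifold preservation for the approximating ODEs. For quasi-surely fixed $\omega$, the combined driver $(t,\langle B^\alpha,B^\beta\rangle_t(\omega),B_t^n(\omega))$ is a continuous path of bounded variation in $\mathbb{R}^{1+d^2+d}$, since each cross variation $\langle B^\alpha,B^\beta\rangle_t$ is a difference of monotone increasing processes (by its polarization definition) and $B_t^n$ is piecewise linear. Grouping $\widetilde{b},\widetilde{h}_{\alpha\beta},\widetilde{V}_\alpha$ into a single family of $C_b^1$ vector fields on $\mathbb{R}^N$ whose restrictions to $M$ are tangent to $M$, Proposition \ref{ODE on manifolds} yields $Y_t^n(\omega)\in M$ for every $t\in[0,1]$. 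Because $p$-variation convergence implies uniform convergence of the first level, and $M$ is closed in $\mathbb{R}^N$, the limit $Y_t$ also lies in $M$ for all $t$ quasi-surely. The quasi-sure identity $X=Y$ from Theorem \ref{relation} then forces $X_t\in M$ quasi-surely.

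Existence of a solution in the sense of Definition \ref{definition of solution on manifold} now reduces, by Proposition \ref{equivalent condition}, to verifying the integrated identity (\ref{coordinates}) for the coordinate functions $F^1,\ldots,F^N$. These equations are precisely the $N$ scalar components of the extended Stratonovich SDE (\ref{extended Stratonovich SDE}) defining $X_t$, once one notes that $\widetilde{V}_\alpha F^i,\widetilde{b}F^i,\widetilde{h}_{\alpha\beta}F^i$ agree on $M$ with their intrinsic counterparts $V_\alpha F^i,bF^i,h_{\alpha\beta}F^i$; the required memberships $\{h_{\alpha\beta}F^i(X_\cdot)\}\in M_G^1(0,1)$ and $\{V_\alpha F^i(X_\cdot)\}\in M_G^2(0,1)$ follow from the compactness of $M$ and the continuity of $X$ in $L_G^2$. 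Independence of $X_t$ (modulo quasi-sure equality) of the choice of extension is inherited from the corresponding independence clause of Proposition \ref{ODE on manifolds}, passed through the approximating sequence.

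For uniqueness, suppose $X_t$ and $X_t'$ are both solutions on $M$ of (\ref{Stratonovich SDE on manifold}) with the same initial point $\xi$. Fixing any choice of $C_b^3$ extensions $\widetilde{b},\widetilde{h}_{\alpha\beta},\widetilde{V}_\alpha$, Proposition \ref{equivalent condition} shows that both $X$ and $X'$ satisfy (\ref{coordinates}) coordinatewise, hence both solve the extended It\^o SDE in $\mathbb{R}^N$; the existing uniqueness theorem for that SDE gives $X=X'$ in $L_G^2$, which upgrades to quasi-sure equality via Lemma \ref{same limit} and continuity of sample paths. The main obstacle in the whole argument is the manifold-preservation step: one has to combine the purely pathwise ODE argument of Proposition \ref{ODE on manifolds} with the quasi-sure rough path limit, and this is precisely where the RDE viewpoint of Section 4, together with the $p$-variation convergence of the dyadic approximation, becomes essential, since a purely $L_G^2$-based It\^o construction in $\mathbb{R}^N$ offers no obvious reason for the trajectories to remain on $M$.
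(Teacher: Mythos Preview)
Your proposal is correct and follows essentially the same route as the paper: extend the vector fields to $\mathbb{R}^N$, identify the extended Stratonovich SDE with an RDE via Theorem \ref{relation}, use Proposition \ref{ODE on manifolds} on the approximating ODEs together with the universal limit theorem to force the solution onto $M$, and conclude existence via Proposition \ref{equivalent condition}. The only minor difference is in the uniqueness step, where the paper invokes pathwise RDE uniqueness directly (which immediately gives quasi-sure equality), whereas you go through $L_G^2$-uniqueness of the extended It\^o SDE and then upgrade; both are valid, but the RDE route is slightly cleaner since it avoids the extra passage from $L_G^2$-equality to quasi-sure equality.
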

\begin{proof}
Fix $C_{b}^{3}$-extensions $\widetilde{b},\widetilde{h}_{\alpha\beta},\widetilde{V}_{\alpha}$
of $b,h_{\alpha\beta},V_{\alpha}$, and let $X_{t}$ be the solution
of the Stratonovich type SDE (\ref{extended Stratonovich SDE}) in
$\mathbb{R}^{N}$ over $[0,1].$ By Theorem \ref{relation}, for quasi-surely
$X_{t}$ coincides with the solution of (\ref{extended Stratonovich SDE})
when it is interpreted as an RDE. Since $M$ is closed in $\mathbb{R}^{N},$
it follows from Proposition \ref{ODE on manifolds} and Theorem \ref{universal limit theorem}
(the universal limit theorem) that for quasi-surely, $X_{t}$ never
leaves $M$ over $[0,1].$ In this case, (\ref{extended Stratonovich SDE})
is equivalent to (\ref{coordinates}), which implies from Proposition
\ref{equivalent condition} that $X_{t}$ is a solution of (\ref{Stratonovich SDE on manifold}).
On the other hand, if $Y_{t}$ is another solution of (\ref{Stratonovich SDE on manifold}),
then it is a solution of (\ref{extended Stratonovich SDE}) (interpreted
as an SDE or an RDE). By the uniqueness of RDEs, we know that $X=Y$
quasi-surely.
\end{proof}

\begin{rem}
It is possible to formulate uniqueness in the $L_{G}^{2}$-sense when
$M$ is regarded as a closed submanifold of $\mathbb{R}^{N}.$ However,
we use the quasi-sure formulation because the notion itself is intrinsic
although the proof is developed from the extrinsic point of view.
\end{rem}

\section{$G$-Brownian Motion on a Compact Riemannian Manfold and the Generating
PDE}

In this section, we are going to introduce the notion of $G$-Brownian
motion on a Riemannian manifold for a wide and interesting class of
$G$-functions, based on Eells-Elworthy-Malliavin's horizontal lifting
construction (see \cite{elworthy1998stochastic}, \cite{hsu2002stochastic},
\cite{ikeda1989stochastic} for the construction of Brownian motion
on a Riemannian manifold and related topics). Roughly speaking, we
will ``roll'' an Euclidean $G$-Brownian motion up to a Riemannian
manifold ``without slipping'' via a proper frame bundle (for the
class of $G$-functions we are interested in, such bundle is the orthonormal
frame bundle).

In the classical case, we know that the law of a $d$-dimensional
Brownian motion $B_{t}$ is invariant under orthogonal transformations
on $\mathbb{R}^{d}.$ This is a crucial point to obtain a linear parabolic
PDE (in fact, the standard heat equation associated with the Bochner
horizontal Laplacian $\Delta_{\mathcal{O}(M)}$) on the orthonormal
frame bundle $\mathcal{O}(M)$ over a Riemannian manifold $M$ governing
the law of the horizontal lifting $\xi_{t}$ of $B_{t}$ to $\mathcal{O}(M),$
which is invariant under orthogonal transformations along fibers.
It is such an invariance that enables us to ``project'' the PDE
onto the base manifold $M$ and obtain the standard heat equation
associated with the Laplace-Beltrami operator $\Delta_{M}$ on $M.$
This heat equation governs the law of the development $X_{t}=\pi(\xi_{t})$
of $B_{t}$ to the Riemannian manifold $M$ via the horizontal lifting
$\xi_{t}$. As a stochastic process on $M,$ although $X_{t}$ depends
on the initial orthonormal frame $\xi$ at $x$ as well as the initial
position $x\in M$, the law of $X_{t}$ depends only on the initial
position $x,$ and it is characterized by the Laplace-Beltrami operator
$\Delta_{M}$ via the heat equation. Equivalently, it can be shown
that the law of $X_{t}$ is the unique solution of the martingale
problem on $M$ associated with $\Delta_{M}$ starting at $x$. $X_{t}$
is called the Brownian motion on $M$ starting at $x$ in the sense
of Eells-Elworthy-Malliavin.

It is quite natural to expect that the Brownian sample paths $X_{t}$
on $M$ will depend on the initial orthonormal frame $\xi$ at $x$
if we look back into the Euclidean case, in which we actually fix
the standard orthonormal basis in advance and define Brownian motion
in the corresponding coordinate system. If we use another orthonormal
basis, we obtain a process (still a Brownian motion) which is an orthogonal
transformation of the original Brownian motion. Therefore, it is the
law, which is characterized by the Laplace operator on $\mathbb{R}^{d}$,
rather than the sample paths that captures the intrinsic nature of
the Brownian motion, and such nature can be developed in a Riemannian
geometric setting. 

It should be remarked that in a pathwise manner, we can lift $B_{t}$
horizontally to the total frame bundle $\mathcal{F}(M)$ instead of
$\mathcal{O}(M)$ by solving the same SDE generating by the horizontal
vector fields but using a general frame instead of an orthonormal
one as initial condition. Moreover, we can write down the generating
heat equation on $\mathcal{F}(M)$ which takes the same form of the
one on $\mathcal{O}(M).$ The key difference here is that although
the horizontal lifting of $B_{t}$ can be projected onto $M,$ the
heat equation on $\mathcal{F}(M)$ cannot. In other words, the heat
equation is not invariant under nondegenerate linear transformations
along fibers. This becomes uninteresting to us, as we are not able
to obtain an intrinsic law of the development of $B_{t}$ on $M$
which is independent of initial frames. The fundamental reason of
using the orthonormal frame bundle is that the Laplace operator on
$\mathbb{R}^{d}$ is invariant exactly under orthogonal transformations.

The case of $G$-Brownian motion can be understood in a similar manner.
From the last section we are able to solve SDEs on a differentiable
manifold (in particular, on $\mathcal{F}(M)$) driven by an Euclidean
$G$-Brownian motion $B_{t}$. By projection we obtain the development
$X_{t}$ of $B_{t}$ to $M$. As we've pointed out before, such development
is of no interest unless we are able to prove that the law of $X_{t}$
depends only on the initial position $x$ rather than the initial
frame. In fact, if the law of $X_{t}$ depends on the initial frame,
we might not be able to write down the generating PDE of $X_{t}$
intrinsically on $M$ although it is possible on $\mathcal{F}(M)$.
Therefore, for a given $G$-function, it is crucial to identify a
proper frame bundle over $M$ with a specific structure group such
that parallel transport preserves fibers and the generating PDE (associated
with $G$) of the horizontal lifting $\xi_{t}$ of $B_{t}$ to such
frame bundle is invariant under actions by the structure group along
fibers. From this, the law of $X_{t}$ will be independent of initial
frames in the fibre over $x$ ($x$ is the starting point of $X_{t}$)
and we might be able to obtain the generating PDE of $X_{t}$, which
is associated with $G$ and intrinsically defined on $M.$

As we shall see, such idea depends on a crucial algebraic quantity
associated with the $G$-function called the invariant group $I(G)$
of $G$, which will be defined later on. In this paper, we are interested
in the case when $I(G)$ is the orthogonal group. We will see that
it contains a wide class of $G$-functions. In particular, one example
is the generalization of the one-dimensional Barenblatt equation to
higher dimensions.

The concept of the invariant group of $G$ is motivated from the study
of infinitesimal diffusive nature of SDEs driven by $G$-Brownian
motion and their generating PDEs, which will be discussed below.

We first consider the Euclidean case. 

From now on, we always assume that $G:\ S(d)\rightarrow\mathbb{R}$
is a given continuous, sublinear and monotonic function. Equivalently,
from Section 2 we know that $G$ is represented by
\begin{equation}
G(A)=\frac{1}{2}\sup_{B\in\Sigma}\mbox{tr}(AB),\ \forall A\in S(d),\label{rep of G in section 7}
\end{equation}
where $\Sigma$ is some bounded, closed and convex subset of $S_{+}(d).$
Let $B_{t}$ be the standard $d$-dimensional $G$-Brownian motion
on the path space.

Assume that $V_{1},\cdots,V_{d}$ are $C_{b}^{3}$ -vector fields
on $\mathbb{R}^{N}.$ Consider the following $N$-dimensional Stratonovich
type SDE over $[0,1]$:
\begin{equation}
\begin{cases}
dX_{t,x}=V_{\alpha}(X_{t,x})\circ dB_{t}^{\alpha},\\
X_{0,x}=x,
\end{cases}\label{SDE in section 7}
\end{equation}
which is either interpreted as an RDE or the associated It$\hat{\mbox{o}}$
type SDE
\[
\begin{cases}
dX_{t,x}=V_{\alpha}(X_{t,x})dB_{t}^{\alpha}+\frac{1}{2}DV_{\alpha}(X_{t,x})V_{\beta}(X_{t,x})\langle B^{\alpha},B^{\beta}\rangle_{t},\\
X_{t,x}=x,
\end{cases}
\]
according to the main result of Section 5. 

The following result characterizes the generator of the SDE (\ref{SDE in section 7})
in terms of $G$. It describes the infinitesimal diffusive nature
of (\ref{SDE in section 7}). One might compare it with the case of
linear diffusion processes.
\begin{prop}
\label{Proposition infinitesimal generator}For any $p\in\mathbb{R}^{N},\ A\in S(N),$
\begin{align}
 & \lim_{\delta\rightarrow0^{+}}\frac{1}{\delta}\mathbb{E}^{G}[\langle p,X_{\delta,x}-x\rangle+\frac{1}{2}\langle A(X_{\delta,x}-x),X_{\delta,x}-x\rangle]\nonumber \\
= & G((\frac{1}{2}\langle p,DV_{\alpha}(x)V_{\beta}(x)+DV_{\beta}(x)V_{\alpha}(x)\rangle+\langle AV_{\alpha}(x),V_{\beta}(x)\rangle)_{1\leqslant\alpha,\beta\leqslant d}).\label{infinitesimal generator}
\end{align}
\end{prop}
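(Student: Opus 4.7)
The plan is to apply the $G$-It\^{o} formula to the polynomial $\varphi(y):=\langle p,y-x\rangle+\frac{1}{2}\langle A(y-x),y-x\rangle$ along the solution of (\ref{SDE in section 7}) in its It\^{o} form
\[
dX_{t,x}^{i}=V_{\alpha}^{i}(X_{t,x})dB_{t}^{\alpha}+\frac{1}{2}(DV_{\alpha}V_{\beta})^{i}(X_{t,x})d\langle B^{\alpha},B^{\beta}\rangle_{t},
\]
so that $\varphi(X_{\delta,x})-\varphi(x)$ is exactly the random variable whose $G$-expectation we must evaluate. The $G$-It\^{o} formula then writes this difference as a $G$-It\^{o} integral $\int_{0}^{\delta}\partial_{i}\varphi(X_{s})V_{\alpha}^{i}(X_{s})dB_{s}^{\alpha}$ (whose integrand lies in $M_{G}^{2}(0,\delta)$ since $\varphi$ is a polynomial, the $V_{\alpha}$ are bounded, and $X_{s,x}-x$ is in $M_{G}^{2}$) plus a single integral against $d\langle B^{\alpha},B^{\beta}\rangle$. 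By property (3) of the $G$-It\^{o} integral, the first piece has zero $G$-expectation, so the whole problem reduces to analysing
\[
\mathbb{E}^{G}\!\!\left[\int_{0}^{\delta}\!\!\left(\tfrac{1}{2}\partial_{i}\varphi(X_{s})(DV_{\alpha}V_{\beta})^{i}(X_{s})+\tfrac{1}{2}\partial_{ij}\varphi(X_{s})V_{\alpha}^{i}(X_{s})V_{\beta}^{j}(X_{s})\right)d\langle B^{\alpha},B^{\beta}\rangle_{s}\right].
\]

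Evaluated at $X_{s}=x$, where $\partial_{i}\varphi(x)=p_{i}$ and $\partial_{ij}\varphi(x)=A_{ij}$, the integrand is $\tfrac{1}{2}p_{i}(DV_{\alpha}V_{\beta})^{i}(x)+\tfrac{1}{2}A_{ij}V_{\alpha}^{i}(x)V_{\beta}^{j}(x)$, which after symmetrising in $(\alpha,\beta)$ (permissible since $d\langle B^{\alpha},B^{\beta}\rangle_{s}$ is symmetric) equals exactly $\tfrac{1}{2}C_{\alpha\beta}$, where $C_{\alpha\beta}:=\tfrac{1}{2}\langle p,DV_{\alpha}(x)V_{\beta}(x)+DV_{\beta}(x)V_{\alpha}(x)\rangle+\langle AV_{\alpha}(x),V_{\beta}(x)\rangle$ is precisely the matrix appearing inside $G$ on the right-hand side of (\ref{infinitesimal generator}). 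Since $\langle B\rangle_{\delta}\sim N(\delta\Sigma,\{0\})$ is maximal-distributed and $M\mapsto\mbox{tr}(CM)$ is linear, one reads off
\[
\mathbb{E}^{G}\!\left[\tfrac{1}{2}\sum_{\alpha,\beta}C_{\alpha\beta}\langle B^{\alpha},B^{\beta}\rangle_{\delta}\right]=\tfrac{\delta}{2}\sup_{Q\in\Sigma}\mbox{tr}(CQ)=\delta\,G(C),
\]
using the representation (\ref{rep of G in section 7}) of $G$ in the final equality. After dividing by $\delta$ this is already the claimed limit, so the remaining issue is to justify replacing $X_{s}$ by $x$ inside the integrand.

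The main obstacle is precisely this error estimate. Writing the full integrand as $f_{\alpha\beta}(y)$, the $C_{b}^{3}$-regularity of $V_{\alpha}$ together with the polynomial form of $\varphi$ gives $|f_{\alpha\beta}(y)-f_{\alpha\beta}(x)|\leqslant C(1+|y-x|)|y-x|$ uniformly in $y$. One then uses the standard $L_{G}^{2}$-continuity estimate $\mathbb{E}^{G}[|X_{s,x}-x|^{2}]\leqslant Cs$ for solutions of SDEs driven by $G$-Brownian motion (derivable from Peng's fixed-point argument in combination with Lemma \ref{lem 1}), and reduces the $d\langle B^{\alpha},B^{\beta}\rangle$-integral to integrals against the increasing processes $d\langle B^{v}\rangle$ via the polarisation $\langle B^{\alpha},B^{\beta}\rangle=\tfrac{1}{4}(\langle B^{e_{\alpha}+e_{\beta}}\rangle-\langle B^{e_{\alpha}-e_{\beta}}\rangle)$. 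The inequality $\mathbb{E}^{G}[\int_{0}^{\delta}|\eta_{s}|d\langle B^{v}\rangle_{s}]\leqslant\overline{\sigma}_{v}^{2}\int_{0}^{\delta}\mathbb{E}^{G}[|\eta_{s}|]ds$ (established by the same step-process truncation argument as Lemma \ref{lem 1}), combined with $\mathbb{E}^{G}[|X_{s}-x|]\leqslant C\sqrt{s}$ from Cauchy--Schwarz, then bounds the error by $C\int_{0}^{\delta}\sqrt{s}\,ds=O(\delta^{3/2})=o(\delta)$. The delicate point throughout is that the sublinear expectation does not commute with sums, so one must consistently use $|\mathbb{E}^{G}[X]-\mathbb{E}^{G}[Y]|\leqslant\mathbb{E}^{G}[|X-Y|]$ in place of linearity when passing between the exact and frozen-coefficient integrands.
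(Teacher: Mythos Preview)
Your proof is correct and follows the same overall strategy as the paper's---freeze the coefficients at $x$, identify the frozen quantity with $G(C)$, and control the remainder via $\mathbb{E}^{G}[|X_{s,x}-x|^{2}]\leqslant Cs$---but the decomposition you use is genuinely different. You apply the $G$-It\^{o} formula to $\varphi$, which immediately converts the quadratic form $\tfrac{1}{2}\langle A(X_{\delta}-x),X_{\delta}-x\rangle$ into a single integral against $d\langle B^{\alpha},B^{\beta}\rangle$ (plus a martingale of zero $G$-expectation), and then identify the frozen integral $\tfrac{1}{2}C_{\alpha\beta}\langle B^{\alpha},B^{\beta}\rangle_{\delta}$ with $\delta\,G(C)$ via the maximal distribution of $\langle B\rangle_{\delta}$. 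The paper instead substitutes the integral representation of $X_{\delta}-x$ directly into the quadratic form, keeps the resulting \emph{product} of It\^{o} integrals $\langle A\!\int V_{\alpha}dB^{\alpha},\int V_{\beta}dB^{\beta}\rangle$, freezes it to $\langle AV_{\alpha}(x),V_{\beta}(x)\rangle B_{\delta}^{\alpha}B_{\delta}^{\beta}$, and then uses $G(A)=\tfrac{1}{2\delta}\mathbb{E}^{G}[\langle AB_{\delta},B_{\delta}\rangle]$ together with the zero mean uncertainty of $B_{\delta}^{\alpha}B_{\delta}^{\beta}-\langle B^{\alpha},B^{\beta}\rangle_{\delta}$. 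Your route is arguably cleaner: the It\^{o} formula absorbs the product-of-integrals manipulation and the error analysis reduces to a single frozen-integrand estimate, whereas the paper must separately handle the cross terms (Itô $\times$ bounded-variation, contributing the $C\delta^{1/2}$ and $C\delta$ in their display) before freezing. The paper's route, on the other hand, avoids invoking the $G$-It\^{o} formula and the polarisation of $\langle B^{\alpha},B^{\beta}\rangle$, staying closer to first principles.
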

\begin{proof}
From the distribution of $B_{t}$ we know that 
\begin{eqnarray*}
G(A) & = & \frac{1}{2}\mathbb{E}^{G}[\langle AB_{1},B_{1}\rangle]\\
 & = & \frac{1}{2t}\mathbb{E}^{G}[\langle AB_{t},B_{t}\rangle],\ \forall t>0.
\end{eqnarray*}
Therefore, the R.H.S. of (\ref{infinitesimal generator}) is equal
to 
\[
I_{\delta}=\frac{1}{2\delta}\mathbb{E}^{G}[(\langle p,DV_{\alpha}(x)V_{\beta}(x)\rangle+\langle AV_{\alpha}(x),V_{\beta}(x)\rangle)B_{\delta}^{\alpha}B_{\delta}^{\beta}],
\]
for any $\delta>0.$

Since
\[
X_{\delta,x}-x=\int_{0}^{\delta}V_{\alpha}(X_{s,x})dB_{s}^{\alpha}+\frac{1}{2}\int_{0}^{\delta}DV_{\alpha}(X_{s,x})V_{\beta}(X_{s,x})d\langle B^{\alpha},B^{\beta}\rangle_{s},
\]
by the properties of $\mathbb{E}^{G}$ and the distribution of $B_{t},$
we have 
\begin{align*}
 & |\frac{1}{\delta}\mathbb{E}^{G}[\langle p,X_{\delta,x}-x\rangle+\frac{1}{2}\langle A(X_{\delta,x}-x),X_{\delta,x}-x\rangle]-I_{\delta}|\\
\leqslant & |\frac{1}{2\delta}\mathbb{E}^{G}[\int_{0}^{\delta}\langle p,DV_{\alpha}(X_{s,x})\cdot V_{\beta}(X_{s,x})\rangle d\langle B^{\alpha},B^{\beta}\rangle_{s}\\
 & +\langle A\int_{0}^{\delta}V_{\alpha}(X_{s,x})dB_{s}^{\alpha},\int_{0}^{\delta}V_{\beta}(X_{s,x})dB_{s}^{\beta}\rangle]-\frac{1}{2\delta}\mathbb{E}^{G}[\langle p,DV_{\alpha}(x)\cdot V_{\beta}(x)\rangle\langle B^{\alpha},B^{\beta}\rangle_{\delta}\\
 & +\langle AV_{\alpha}(x)B_{\delta}^{\alpha},V_{\beta}(x)B_{\delta}^{\beta}\rangle]|+C\delta^{\frac{1}{2}}+C\delta\\
\leqslant & \frac{1}{2\delta}(C\int_{0}^{\delta}\sqrt{\mathbb{E}^{G}[|X_{s,x}-x|^{2}]}ds+C\int_{0}^{\delta}\mathbb{E}^{G}[|X_{s,x}-x|^{2}]ds+\\
 & C\delta^{\frac{1}{2}}\sqrt{\int_{0}^{\delta}\mathbb{E}^{G}[|X_{s,x}-x|^{2}]ds})+C\delta^{\frac{1}{2}}+C\delta,
\end{align*}
where we've also used the fact that $G$-It$\hat{\mbox{o}}$ integrals
and $B_{\delta}^{\alpha}B_{\delta}^{\beta}-\langle B^{\alpha},B^{\beta}\rangle_{\delta}$
have zero mean uncertainty. Here $C$ always denotes positive constants
independent of $\delta$.

Now the result follows easily from the fact that 
\[
\mathbb{E}^{G}[|X_{t,x}-x|^{2}]\leqslant Ct,\ \forall t\in[0,1].
\]

\end{proof}

The infinitesimal diffusive nature of (\ref{SDE in section 7}) characterized
by Proposition \ref{Proposition infinitesimal generator} enables
us to establish the generating PDE of (\ref{SDE in section 7}) in
terms of viscosity solutions. The understanding of this PDE, especially
its intrinsic nature, is essential for the development in a geometric
setting.
\begin{thm}
\label{Theorem PDE of SDE}Let $\varphi\in C_{b}^{\infty}(\mathbb{R}^{N}),$
and define
\[
u(t,x)=\mathbb{E}^{G}[\varphi(X_{t,x})],\ (t,x)\in[0,1]\times\mathbb{R}^{N}.
\]
Then $u(t,x)$ is the unique viscosity solution of the following nonlinear
parabolic PDE:
\begin{equation}
\begin{cases}
\frac{\partial u}{\partial t}-G((\widehat{V_{\alpha}V_{\beta}}u)_{1\leqslant\alpha,\beta\leqslant d})=0,\\
u(0,x)=\varphi(x),
\end{cases}\label{PDE of SDE}
\end{equation}
where $\widehat{V_{\alpha}V_{\beta}}$ denotes the symmetrization
of the second order differential operator $V_{\alpha}V_{\beta},$
that is, 
\[
\widehat{V_{\alpha}V_{\beta}}=\frac{1}{2}(V_{\alpha}V_{\beta}+V_{\beta}V_{\alpha}).
\]
\end{thm}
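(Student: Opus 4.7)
The plan is to verify that $u(t,x)$ is a viscosity solution via the standard test-function argument, using a dynamic programming principle (DPP) together with the infinitesimal characterization in Proposition \ref{Proposition infinitesimal generator}, and then to obtain uniqueness by appealing to comparison for fully nonlinear parabolic equations with sublinear monotone nonlinearity.

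First I would establish the basic regularity of $u$ and the DPP. Using the $L^2_G$-Lipschitz dependence of $X_{t,x}$ on $x$ (a Gronwall argument applied to the SDE (\ref{SDE in section 7}), as in the proof of Theorem \ref{Euler-Maruyama}) together with the $L^2_G$-continuity of $X_{t,x}$ in $t$ and the smoothness/boundedness of $\varphi$, one shows that $u \in C_b([0,1] \times \mathbb{R}^N)$. The DPP
\begin{equation}
u(t+\delta,x) \;=\; \mathbb{E}^{G}\!\left[\,u(t,X_{\delta,x})\right], \qquad 0 \le \delta \le t+\delta \le 1, \label{DPP}
\end{equation}
follows from the flow property $X_{t+\delta,x} = X_{t,X_{\delta,x}}\circ\theta_\delta$ (where $\theta_\delta$ is time shift) combined with the independence of the increments of $G$-Brownian motion after time $\delta$; concretely, approximating $X_{t,x}$ by the Euler-Maruyama scheme of Section~3, the DPP holds exactly at the approximation level by the definition of conditional $G$-expectation, and one then passes to the limit using the uniform convergence estimate of Theorem \ref{Euler-Maruyama}.

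Second, I would prove the viscosity subsolution property (supersolution is symmetric). Let $\psi \in C^{1,2}_b$ touch $u$ from above at $(t_0,x_0)$ with $t_0 > 0$, i.e.\ $\psi \ge u$ and $\psi(t_0,x_0) = u(t_0,x_0)$. For small $\delta > 0$, (\ref{DPP}) gives
\[
\psi(t_0,x_0) \;=\; u(t_0,x_0) \;=\; \mathbb{E}^{G}\!\left[u(t_0-\delta,X_{\delta,x_0})\right] \;\le\; \mathbb{E}^{G}\!\left[\psi(t_0-\delta,X_{\delta,x_0})\right].
\]
A second-order Taylor expansion of $\psi$ at $(t_0,x_0)$ together with the moment bound $\mathbb{E}^{G}[|X_{\delta,x_0}-x_0|^{2+\epsilon}] = o(\delta)$ yields
\[
\psi(t_0{-}\delta,X_{\delta,x_0}) = \psi(t_0,x_0) - \partial_t\psi\,\delta + \langle D_x\psi, X_{\delta,x_0}{-}x_0\rangle + \tfrac{1}{2}\langle D_x^2\psi(X_{\delta,x_0}{-}x_0), X_{\delta,x_0}{-}x_0\rangle + o(\delta),
\]
where derivatives are evaluated at $(t_0,x_0)$. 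Taking $\mathbb{E}^G$, dividing by $\delta$, letting $\delta \to 0^+$ and invoking Proposition \ref{Proposition infinitesimal generator} with $p = D_x\psi$, $A = D_x^2\psi$, one arrives at
\[
\partial_t\psi(t_0,x_0) \;\le\; G\!\left(\left(\tfrac{1}{2}\langle D_x\psi, DV_\alpha V_\beta{+}DV_\beta V_\alpha\rangle + \langle D_x^2\psi\, V_\alpha, V_\beta\rangle\right)_{\alpha,\beta}\right)\Big|_{(t_0,x_0)}.
\]
The matrix entry on the right is precisely $\widehat{V_\alpha V_\beta}\psi(t_0,x_0)$ by definition, giving the subsolution inequality for (\ref{PDE of SDE}). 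The initial condition $u(0,x) = \varphi(x)$ is immediate.

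Third, uniqueness: the nonlinearity $F(p,A) := -G((\widehat{V_\alpha V_\beta}\text{-terms}))$ is continuous, degenerate elliptic (by monotonicity of $G$ on $S(d)$) and Lipschitz in its arguments, so the standard comparison principle in the Crandall--Ishii--Lions framework (see \cite{crandall1992user}) applies in the class of bounded uniformly continuous functions, yielding uniqueness. The main obstacle I anticipate is a rigorous justification of the DPP (\ref{DPP}) under $G$-expectation: one must verify that the composition $\mathbb{E}^G[\varphi(X_{t,\cdot})]$ evaluated at the random variable $X_{\delta,x}$ coincides with the conditional $G$-expectation $\mathbb{E}^G[\varphi(X_{t+\delta,x})\mid \Omega_\delta]$. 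This is routine for cylinder functionals of $B$ using the independence of the increments of $G$-Brownian motion, but extending it to the full solution of the SDE requires the Euler-Maruyama approximation and an $L^2_G$-continuity argument, rather than the classical localization/optional-stopping toolkit that is unavailable in the $G$-expectation setting.
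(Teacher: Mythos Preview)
Your proposal is correct and follows essentially the same route as the paper: establish a dynamic programming identity $u(t_0,x_0)=\mathbb{E}^G[u(t_0-\delta,X_{\delta,x_0})]$, touch with a smooth test function, Taylor expand, and invoke Proposition~\ref{Proposition infinitesimal generator} to read off the sub/supersolution inequality; uniqueness is then deferred to the standard comparison machinery. The only notable difference is in how the DPP is justified: you propose to obtain it by passing to the limit through the Euler--Maruyama scheme, whereas the paper argues it directly from the flow property of the SDE and the independence/stationarity of the increments of $B_t$ and $\langle B^\alpha,B^\beta\rangle_t$, writing $\mathbb{E}^G[\varphi(X_{t_0,x_0})\mid\Omega_\delta]=\mathbb{E}^G[\varphi(X_{t_0-\delta,y})]\big|_{y=X_{\delta,x_0}}$ without any discretization. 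One small point to tighten in your uniqueness discussion: the nonlinearity depends on $x$ through the vector fields $V_\alpha$, so you should record $F=F(x,p,A)$ and check the relevant $x$-continuity hypothesis (Assumption~(G) in \cite{peng2010nonlinear}), as the paper does explicitly.
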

\begin{proof}
The continuity of $u$ in $t$ and $x$ can be shown in a standard
way by using the Lipschitz continuity of $\varphi$ (in fact, $u$
is Lipchitz in $x$ and $\frac{1}{2}$-H$\ddot{\mbox{o}}$lder continuous
in $t$). Here the proof is omitted.

Fix $(t_{0},x_{0})\in(0,1)\times\mathbb{R}^{N}.$ Let $v(t,x)\in C_{b}^{2,3}([0,1]\times\mathbb{R}^{N})$
be a test function such that 
\[
u(t_{0},x_{0})=v(t_{0},x_{0})
\]
and 
\[
u(t,x)\leqslant v(t,x),\ \forall(t,x)\in[0,1]\times\mathbb{R}^{N}.
\]
For $0<\delta<t_{0},$ by the uniqueness of the SDE (\ref{SDE in section 7})
and the fact that $B_{t}$ and $\langle B^{\alpha},B^{\beta}\rangle_{t}$
have independent and identically distributed increments, we know that
\begin{eqnarray*}
\mathbb{E}^{G}[\varphi(X_{t_{0},x_{0}})|\Omega_{\delta}] & = & \mathbb{E}^{G}[\varphi(X_{\delta,x_{0}}+\int_{\delta}^{t_{0}}V_{\alpha}(X_{s,x_{0}})dB_{s}^{\alpha}\\
 &  & +\frac{1}{2}\int_{\delta}^{t_{0}}DV_{\alpha}(X_{s,x_{0}})\cdot V_{\beta}(X_{s,x_{0}})d\langle B^{\alpha},B^{\beta}\rangle_{s})|\Omega_{\delta}]\\
 & = & \mathbb{E}^{G}[\varphi(X_{t_{0}-\delta,y})]|_{y=X_{\delta,x_{0}}}.
\end{eqnarray*}
Therefore, 
\begin{eqnarray*}
v(t_{0},x_{0}) & = & \mathbb{E}^{G}[\varphi(X_{t_{0},x_{0}})]\\
 & = & \mathbb{E}^{G}[\mathbb{E}^{G}[\varphi(X_{t_{0},x_{0}})|\Omega_{\delta}]]\\
 & = & \mathbb{E}^{G}[u(t_{0}-\delta,X_{\delta,x_{0}})]\\
 & \leqslant & \mathbb{E}^{G}[v(t_{0}-\delta,X_{\delta,x_{0}})].
\end{eqnarray*}
It follows that
\begin{eqnarray*}
0 & \leqslant & \mathbb{E}^{G}[v(t_{0}-\delta,X_{\delta,x_{0}})-v(t_{0},x_{0})]\\
 & = & \mathbb{E}^{G}[v(t_{0}-\delta,X_{\delta,x_{0}})-v(t_{0},X_{\delta,x_{0}})+v(t_{0},X_{\delta,x_{0}})-v(t_{0},x_{0})]\\
 & = & \mathbb{E}^{G}[-\delta\int_{0}^{1}\frac{\partial v}{\partial t}(t_{0}-(1-\alpha)\delta,X_{\delta,x_{0}})d\alpha+\langle\nabla v(t_{0},x_{0}),X_{\delta,x_{0}}-x_{0}\rangle\\
 &  & +\int_{0}^{1}\int_{0}^{1}\langle\nabla^{2}v(t_{0},x_{0}+\alpha\beta(X_{\delta,x_{0}}-x_{0}))(X_{\delta,x_{0}}-x_{0}),X_{\delta,x_{0}}-x_{0}\rangle\alpha d\alpha d\beta]\\
 & \leqslant & -\delta\frac{\partial v}{\partial t}(t_{0},x_{0})+\mathbb{E}^{G}[\langle\nabla v(t_{0},x_{0}),X_{\delta,x_{0}}-x_{0}\rangle\\
 &  & +\frac{1}{2}\langle\nabla^{2}v(t_{0},x_{0})(X_{\delta,x_{0}}-x_{0}),X_{\delta,x_{0}}-x_{0}\rangle]+\mathbb{E}^{G}[|I_{\delta}|]+\mathbb{E}^{G}[|J_{\delta}|],
\end{eqnarray*}
where 
\begin{eqnarray*}
I_{\delta} & = & -\delta\int_{0}^{1}(\frac{\partial v}{\partial t}(t_{0}-(1-\alpha)\delta,X_{\delta,x_{0}})-\frac{\partial v}{\partial t}(t_{0},x_{0}))d\alpha,\\
J_{\delta} & = & \int_{0}^{1}\int_{0}^{1}\langle(\nabla^{2}v(t_{0},x_{0}+\alpha\beta(X_{\delta,x_{0}}-x_{0}))-\nabla^{2}v(t_{0},x_{0}))(X_{\delta,x_{0}}-x_{0}),X_{\delta,x_{0}}-x_{0}\rangle\alpha d\alpha d\beta.
\end{eqnarray*}

By a standard argument one can easily show that 
\[
\mathbb{E}^{G}[|I_{\delta}|]+\mathbb{E}^{G}[|J_{\delta}|]\leqslant C\delta^{\frac{3}{2}},
\]
where $C$ is a positive constant independent of $\delta.$ On the
other hand, the R.H.S. of (\ref{infinitesimal generator}) applying
to 
\[
p=\nabla v(t_{0},x_{0}),\ A=\nabla^{2}v(t_{0},x_{0}),
\]
is exactly the same as $G((\widehat{V_{\alpha}V_{\beta}}v(t_{0},x_{0}))_{1\leqslant\alpha,\beta\leqslant d}).$
Therefore, by Proposition \ref{Proposition infinitesimal generator},
we arrive at
\[
\frac{\partial v}{\partial t}(t_{0},x_{0})-G((\widehat{V_{\alpha}V_{\beta}}v(t_{0},x_{0}))_{1\leqslant\alpha,\beta\leqslant d})\leqslant0.
\]
Consequently, $ $$u(t,x)$ is a viscosity subsolution of (\ref{PDE of SDE}).

Similarly, one can show that $u(t,x)$ is a viscosity supersolution
of (\ref{PDE of SDE}). Therefore, $u(t,x)$ is a viscosity solution
of (\ref{PDE of SDE}).

The reason of uniqueness is the following. Define a function $F:\ \mathbb{R}^{N}\times\mathbb{R}^{N}\times S(N)\rightarrow\mathbb{R}$
by the R.H.S. of (\ref{infinitesimal generator}), that is, 
\[
F(x,p,A)=G((\frac{1}{2}\langle p,DV_{\alpha}(x)\cdot V_{\beta}(x)+DV_{\beta}(x)\cdot V_{\alpha}(x)\rangle+\langle AV_{\alpha}(x),V_{\beta}(x)\rangle)_{1\leqslant\alpha,\beta\leqslant d}),
\]
for $(x,p,A)\in\mathbb{R}^{N}\times\mathbb{R}^{N}\times S(N).$ It
is easy to prove that $F$ is sublinear in $(p,A)$ and monotonically
increasing in $S(N),$ due to the same properties held by $G$. Moreover,
$F$ satisfies the continuity condition (Assumption (G) in Appendix
C of \cite{peng2010nonlinear}) for the uniqueness of the associated
nonlinear PDE, due to the regularity of the given vector fields $V_{\alpha}.$
In other words, all properties of $G$ to ensure uniqueness are preserved
in $F,$ and the space dependence of $F$ coming out are uniformly
controlled. Therefore, according to the uniqueness results (see \cite{crandall1992user},
\cite{peng2010nonlinear}), the parabolic PDE has a unique viscosity
solution, which is given by $u(t,x).$
\end{proof}

\begin{example}
\label{Euclidean G-Brownian motino under transformation}An example
which motivates the study of $G$-Brownian motion on a Riemannian
manifold is the following.

Let $Q\in GL(d,\mathbb{R})$, where $GL(d,\mathbb{R})$ is the group
of $d\times d$ real invertible matrices. Define $B_{t}^{Q}=QB_{t},$
and for $\varphi\in C_{b}^{\infty}(\mathbb{R}^{d}),$ define 
\[
u(t,x)=\mathbb{E}^{G}[\varphi(x+B_{t}^{Q})],\ (t,x)\in[0,1]\times\mathbb{R}^{d}.
\]
Then $u(t,x)$ is the unique viscosity solution of the PDE:
\[
\begin{cases}
\frac{\partial u}{\partial t}-G(Q^{T}\cdot\nabla^{2}u\cdot Q)=0,\\
u(0,x)=\varphi(x).
\end{cases}
\]
In fact, it follows directly from Theorem \ref{Theorem PDE of SDE}
if we regard $x+B_{t}^{Q}$ as the solution of the SDE over $[0,1]$:
\begin{equation}
\begin{cases}
dX_{t,x}=Q_{\alpha}\circ dB_{t}^{\alpha},\\
X_{0,x}=x,
\end{cases}\label{Brownian motion transformed}
\end{equation}
where $Q=(Q_{1},\cdots,Q_{d}),$ and each $Q_{\alpha}$ is a constant
vector field on $\mathbb{R}^{d}$ (so the SDE (\ref{Brownian motion transformed})
coincides exactly with the It$\hat{\mbox{o}}$ type one).
\end{example}

The result of Theorem \ref{PDE of SDE} is similar to the discussion
of nonlinear Feynman-Kac formula in \cite{peng2010nonlinear}, in
which the solution of a forward-backward SDE is used to represent
the viscosity solution of an associated nonlinear backward parabolic
PDE. In our case, the intrinsic nature of (\ref{PDE of SDE}) is fundamental
and should be emphasized below in order to develop $G$-Brownian motion
on a Riemannian manifold.

It is not hard to see that the nonlinear second order differential
operator $G((\widehat{V_{\alpha}V_{\beta}}\cdot)_{1\leqslant\alpha,\beta\leqslant d})$
is intrinsically defined on $\mathbb{R}^{N},$ since $V_{1},\cdots,V_{d}$
are vector fields independent of coordinates. Moreover, in local coordinates
it preserves the same properties of the $G$-function which is defined
under the standard coordinate system of $\mathbb{R}^{d}.$ In particular,
it shares the same ellipticity as $G$. Therefore, when the vector
fields $V_{\alpha}$ are regular enough, from our results in Section
6, we are able to establish the generating PDE of a nonlinear diffusion
process on a differentiable manifold. As in the last section, for
technical simplicity we restrict ourselves to compact manifolds.

Assume that $M$ is a compact manifold, and $V_{1},\cdots,V_{d}$
are $C^{3}$-vector fields on $M$. According to Section 6, the Stratonovich
type SDE over $[0,1]$
\begin{equation}
\begin{cases}
dX_{t,x}=V_{\alpha}(X_{t,x})\circ dB_{t}^{\alpha},\\
X_{0,x}=x\in M,
\end{cases}\label{SDE on manifold in Section 7}
\end{equation}
has a unique solution. The following result is immediate from Theorem
\ref{Theorem PDE of SDE}.
\begin{thm}
\label{Theorem PDE of SDE on manifold}Let $\varphi\in C^{\infty}(M),$
and define 
\[
u(t,x)=\mathbb{E}^{G}[\varphi(X_{t,x})],\ (t,x)\in[0,1]\times M,
\]
then $u(t,x)$ is the unique viscosity solution of the following nonlinear
parabolic PDE on $M$:
\begin{equation}
\begin{cases}
\frac{\partial u}{\partial t}-G((\widehat{V_{\alpha}V_{\beta}}u)_{1\leqslant\alpha,\beta\leqslant d})=0,\\
u(0,x)=\varphi(x),
\end{cases}\label{PDE of SDE on manifold}
\end{equation}
where $\widehat{V_{\alpha}V_{\beta}}$ is the symmetrization of $V_{\alpha}V_{\beta},$
defined in the same way as in Theorem \ref{Theorem PDE of SDE}. Here
the notion of viscosity solutions for the PDE (\ref{PDE of SDE on manifold})
can be defined in the same way as in the Euclidean case by using test
functions (see \cite{azagra2008viscosity}).\end{thm}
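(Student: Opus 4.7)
The plan is to reduce the statement to the Euclidean Theorem \ref{Theorem PDE of SDE} via the Whitney embedding already exploited in Section 6. Since $M$ is compact, I would embed $M$ as a closed submanifold of some $\mathbb{R}^{N}$ and fix $C_{b}^{3}$-extensions $\widetilde{V}_{\alpha}$ of the intrinsic vector fields $V_{\alpha}$. Let $\widetilde{X}_{t,y}$ denote the $\mathbb{R}^{N}$-valued solution of the extended Stratonovich SDE $d\widetilde{X}_{t,y}=\widetilde{V}_{\alpha}(\widetilde{X}_{t,y})\circ dB_{t}^{\alpha}$ with $\widetilde{X}_{0,y}=y$. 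By Proposition \ref{ODE on manifolds} together with Theorem \ref{universal limit theorem} (as exploited in Section 6), for $y=x\in M$ the path $\widetilde{X}_{t,x}$ quasi-surely never leaves $M$ and coincides quasi-surely with the intrinsic solution $X_{t,x}$ of (\ref{SDE on manifold in Section 7}).

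Next I would extend $\varphi$ to $\widetilde{\varphi}\in C_{b}^{\infty}(\mathbb{R}^{N})$, set $\widetilde{u}(t,y):=\mathbb{E}^{G}[\widetilde{\varphi}(\widetilde{X}_{t,y})]$, and invoke Theorem \ref{Theorem PDE of SDE} to conclude that $\widetilde{u}$ is the unique viscosity solution of the ambient PDE $\partial_{t}\widetilde{u}-G((\widehat{\widetilde{V}_{\alpha}\widetilde{V}_{\beta}}\widetilde{u})_{1\leqslant\alpha,\beta\leqslant d})=0$ with Cauchy data $\widetilde{\varphi}$. By construction $\widetilde{u}|_{[0,1]\times M}=u$. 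The key observation for transferring the viscosity property to $M$ is that, since the $V_{\alpha}$ are tangent to $M$, the quantity $(\widehat{V_{\alpha}V_{\beta}}\psi)(x_{0})$ depends only on the $2$-jet of $\psi|_{M}$ at $x_{0}$; equivalently, for any extension $\widetilde{\psi}$ of a smooth $\psi$ defined near $x_{0}\in M$, one has $(\widehat{\widetilde{V}_{\alpha}\widetilde{V}_{\beta}}\widetilde{\psi})(x_{0})=(\widehat{V_{\alpha}V_{\beta}}\psi)(x_{0})$. Given an intrinsic test function $\psi$ on $M$ touching $u$ from above at $(t_{0},x_{0})$, I would build $\widetilde{\psi}\in C_{b}^{2,3}([0,1]\times\mathbb{R}^{N})$ via a tubular neighborhood (composing $\psi$ with the nearest-point projection $\pi:U\to M$, multiplying by a cutoff, and adding a strictly positive quadratic penalty in the normal direction) so that $\widetilde{\psi}$ touches $\widetilde{u}$ from above at $(t_{0},x_{0})$ on a full $\mathbb{R}^{N}$-neighborhood; the normal penalty contributes nothing to the tangential second-order operator at $x_{0}$, so the ambient viscosity inequality for $\widetilde{u}$ specializes to the required intrinsic inequality for $u$. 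The supersolution half is analogous.

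For uniqueness, I would invoke the comparison principle for viscosity solutions of fully nonlinear second-order parabolic equations on compact Riemannian manifolds in the framework of \cite{azagra2008viscosity}. The intrinsic symbol $F(x,p,A):=G((\widehat{V_{\alpha}V_{\beta}})(p,A))$ obtained by acting on a $2$-jet $(p,A)$ at $x$ inherits from $G$ degenerate ellipticity (monotonicity in $A$), sublinearity in $(p,A)$ and continuity in $x$, the latter thanks to the $C^{3}$-regularity of the $V_{\alpha}$ on compact $M$. These are exactly the ingredients for comparison on $[0,1]\times M$, which forces $u$ to be the unique viscosity solution.

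The main obstacle will be the tubular-neighborhood extension step. The delicate point is that $\widetilde{u}$ off $M$ is not determined by $u|_{M}$ alone (it depends on the choice of the $\widetilde{V}_{\alpha}$), so $\widetilde{\psi}$ must be engineered so that the normal-direction penalty is large enough to dominate any discrepancy between $\widetilde{u}$ and $u\circ\pi$ in a neighborhood of $x_{0}$, while its tangential $2$-jet at $x_{0}$ coincides exactly with that of $\psi$. Once this calibration is in place, the Euclidean viscosity inequality for $\widetilde{u}$ projects down to the intrinsic inequality for $u$, and the proof closes.
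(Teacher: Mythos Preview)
Your proposal is correct and follows essentially the same route as the paper: embed $M$ in $\mathbb{R}^{N}$, extend the vector fields and $\varphi$, apply the Euclidean Theorem~\ref{Theorem PDE of SDE} to obtain $\widetilde{u}$, identify $\widetilde{u}|_{[0,1]\times M}=u$, transfer the viscosity inequalities via a suitable extension of the intrinsic test function, and conclude uniqueness from \cite{azagra2008viscosity}. The only difference is in presentation: the paper simply asserts the existence of an extension $\widetilde{v}$ of the intrinsic test function satisfying $\widetilde{u}\leqslant\widetilde{v}$ on all of $[0,1]\times\mathbb{R}^{N}$, whereas you spell out a concrete tubular-neighborhood construction (nearest-point projection plus normal quadratic penalty) and correctly flag the calibration of the penalty against the off-manifold behaviour of $\widetilde{u}$ as the delicate point---this makes explicit what the paper leaves implicit.
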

\begin{proof}
The result follows easily from an extrinsic point of view. 

In fact, assume that $M$ is embedded into an ambient Euclidean space
$\mathbb{R}^{N}$ as a closed submanifold, and take a $C^{3}$-extension
$\widetilde{V}_{\alpha}$ of $V_{\alpha}$ with compact support. Consider
the following Stratonovich type SDE over $[0,1]$:
\[
\begin{cases}
dX_{t,x}=\widetilde{V}_{\alpha}(X_{t,x})\circ dB_{t}^{\alpha},\\
X_{0,x}=x\in\mathbb{R}^{N}.
\end{cases}
\]
Let $\widetilde{\varphi}$ be a $C^{\infty}$-extension of $\varphi$
with compact support, and define 
\[
\widetilde{u}(t,x)=\mathbb{E}^{G}[\widetilde{\varphi}(X_{t,x})],\ (t,x)\in[0,1]\times\mathbb{R}^{N}.
\]
It follows from Theorem \ref{Theorem PDE of SDE} that $\widetilde{u}(t,x)$
is the unique viscosity solution of the nonlinear parabolic PDE generated
by the vector fields $\widetilde{V}_{\alpha}.$ 

According to Section 6, if $x\in M,$ $X_{t,x}$ will never leave
$M$ quasi-surely. Therefore, when restricted on $M,$ $\widetilde{u}=u.$
In particular, we know that $u$ is continuous. To see that $u$ is
a viscosity subsolution of (\ref{PDE of SDE on manifold}), let $(t_{0},x_{0})\in(0,1)\times M,$
and $v(t,x)\in C^{2,3}([0,1]\times M)$ be a test function such that
\[
v(t_{0},x_{0})=u(t_{0},x_{0})
\]
and 
\[
u(t,x)\leqslant v(t,x),\ \forall(t,x)\in[0,1]\times M.
\]
Take an $C_{b}^{2,3}$-extension $\widetilde{v}$ of $v$ such that
\[
\widetilde{u}(t,x)\leqslant\widetilde{v}(t,x),\ \forall(t,x)\in[0,1]\times\mathbb{R}^{N}.
\]
It follows from previous discussion that 
\[
\frac{\partial\widetilde{v}}{\partial t}(t_{0},x_{0})-G((\widehat{\widetilde{V}_{\alpha}\widetilde{V}_{\beta}}\widetilde{v}(t_{0},x_{0}))_{1\leqslant\alpha,\beta\leqslant d})\leqslant0.
\]
Since 
\[
\widetilde{V}_{\alpha}|_{M}=V_{\alpha},\ \widetilde{v}|_{M}=v,
\]
from the intrinsic nature of the generating PDE, we know that 
\[
\frac{\partial\widetilde{v}}{\partial t}(t_{0},x_{0})=\frac{\partial v}{\partial t}(t_{0},x_{0})
\]
and
\[
G((\widehat{\widetilde{V}_{\alpha}\widetilde{V}_{\beta}}\widetilde{v}(t_{0},x_{0}))_{1\leqslant\alpha,\beta\leqslant d})=G((\widehat{V_{\alpha}V_{\beta}}v(t_{0},x_{0}))_{1\leqslant\alpha,\beta\leqslant d}).
\]
It follows that 
\[
\frac{\partial v}{\partial t}(t_{0},x_{0})-G((\widehat{V_{\alpha}V_{\beta}}v(t_{0},x_{0}))_{1\leqslant\alpha,\beta\leqslant d})\leqslant0.
\]
Therefore, $u(t,x)$ is a viscosity subsolution of (\ref{PDE of SDE on manifold}).
Similarly we can show that it is a viscosity supersolution as well,
and thus a viscosity solution.

The uniqueness of (\ref{PDE of SDE on manifold}) follows from the
same reason as in the proof of Theorem \ref{Theorem PDE of SDE} once
we notice that the second order differential operator $G((\widehat{V_{\alpha}V_{\beta}}\cdot)_{1\leqslant\alpha,\beta\leqslant d})$
on $M$ shares exactly the same properties as $G$ (in particular,
the same ellipticity), which can be seen either from an extrinsic
way or via local computation. Another way to see the uniqueness is
to use the results in \cite{azagra2008viscosity} as long as we assign
a complete Riemannian metric on $M,$ which is always possible according
to \cite{nomizu1961existence}. In this case 
\[
G((\widehat{V_{\alpha}V_{\beta}}u)_{1\leqslant\alpha,\beta\leqslant d})=G((\frac{1}{2}\langle\nabla u,\nabla_{V_{\alpha}}V_{\beta}+\nabla_{V_{\beta}}V_{\alpha}\rangle+\mbox{Hess}u(V_{\alpha},V_{\beta}))_{1\leqslant\alpha,\beta\leqslant d}),
\]
where $\nabla$ is the Levi-Civita connection corresponding to the
Riemannian metric. The uniqueness of (\ref{PDE of SDE on manifold})
follows from Theorem 5.1 in \cite{azagra2008viscosity} directly,
as the assumptions in the theorem are verified by the properties of
$G$. Note that we don't need the Ricci curvature condition in \cite{azagra2008viscosity}
due to the compactness of $M$ and uniform continuity of $G((\widehat{V_{\alpha}V_{\beta}}\cdot)_{1\leqslant\alpha,\beta\leqslant d}).$
\end{proof}

\begin{rem}
The study of the SDE (\ref{SDE on manifold in Section 7}) as a nonlinear
diffusion process on $M$ does not require a Riemannian metric or
a connection on $M.$ The fundamental reason is that (\ref{SDE on manifold in Section 7})
is defined in the pathwise sense as an RDE generated by the vector
fields $V_{\alpha}$ on $M$. Such an RDE only depends on the differential
structure of $M.$ The infinitesimal diffusive nature of (\ref{SDE on manifold in Section 7})
can be studied by local computation.
\end{rem}
Now we turn to the study of $G$-Brownian motion on a Riemannian manifold.
The Riemannian structure (the Levi-Civita connection) is used to ``roll''
the Euclidean $G$-Brownian motion up to the manifold ``without slipping''
by solving an SDE generated by the fundamental horizontal vector fields
on a proper frame bundle (known as horizontal lifting). This is the
fundamental idea of Eells-Elworthy-Malliavin on the construction of
Brownian motion on a Riemannian manifold.

As is pointed out at the beginning of this section, the essential
point of such development is the invariance of the generating PDE
on the frame bundle under actions by the structure group along fibers.
The key of capturing such invariance is Theorem \ref{PDE of SDE}
and Example \ref{Euclidean G-Brownian motino under transformation},
which leads to the following important concept.
\begin{defn}
The invariant group $I(G)$ of $G$ is defined by 
\[
I(G)=\{Q\in GL(d,\mathbb{R}):\ \forall A\in S(d),\ G(Q^{T}AQ)=G(A)\}.
\]

\end{defn}

It is easy to check the $I(G)$ is a group, and hence a subgroup of
$GL(d,\mathbb{R}).$

By using the representation (\ref{rep of G in section 7}) of $G$,
we have the following equivalent characterization of the invariant
group $I(G).$
\begin{prop}
Let $G$ be represented by
\[
G(A)=\frac{1}{2}\sup_{B\in\Sigma}\mbox{tr}(AB),\ \forall A\in S(d),
\]
where $\Sigma$ is some bounded, closed and convex subset of $S_{+}(d).$
Then $\Sigma$ is uniquely determined by $G$ and the invariant group
$I(G)$ of $G$ is given by 
\begin{equation}
I(G)=\{Q\in GL(d,\mathbb{R}):\ Q\Sigma Q^{T}=\Sigma\}.\label{rep of I(G)}
\end{equation}
\end{prop}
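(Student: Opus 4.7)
The plan is to recognize that the map $B \mapsto \operatorname{tr}(AB)$ is the Hilbert--Schmidt inner product on $S(d)$, so that $2G$ is precisely the support function of $\Sigma$ in this inner-product space. Once this observation is in place, both uniqueness of $\Sigma$ and the characterization \eqref{rep of I(G)} follow by standard support-function arguments together with the cyclic property of trace.

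For the uniqueness of $\Sigma$, I would invoke the classical fact that a bounded closed convex subset $K$ of a finite-dimensional real inner product space is recovered from its support function $h_K$ via
\[
K=\bigl\{B:\ \langle A,B\rangle\leqslant h_{K}(A)\ \forall A\bigr\}.
\]
Applied to $K=\Sigma\subset S(d)$ with $\langle A,B\rangle=\operatorname{tr}(AB)$ and $h_{\Sigma}(A)=2G(A)$, this shows that two bounded closed convex subsets of $S(d)$ (a fortiori of $S_{+}(d)$) giving rise to the same $G$ must coincide, so $\Sigma$ is uniquely determined by $G$.

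For the characterization of $I(G)$, the cyclic property of the trace gives, for any $Q\in GL(d,\mathbb{R})$ and any $A\in S(d)$,
\[
G(Q^{T}AQ)=\tfrac{1}{2}\sup_{B\in\Sigma}\operatorname{tr}(Q^{T}AQB)=\tfrac{1}{2}\sup_{B\in\Sigma}\operatorname{tr}(A\,QBQ^{T})=\tfrac{1}{2}\sup_{B'\in Q\Sigma Q^{T}}\operatorname{tr}(AB').
\]
The inclusion $\supseteq$ in \eqref{rep of I(G)} is then immediate: if $Q\Sigma Q^{T}=\Sigma$, the right-hand side above equals $G(A)$. Conversely, if $Q\in I(G)$, the displayed identity shows that $Q\Sigma Q^{T}$ has the same support function as $\Sigma$. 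I would then check that $Q\Sigma Q^{T}$ is bounded (since $B\mapsto QBQ^{T}$ is continuous), closed (same reason, together with the closedness of $\Sigma$), convex (since $B\mapsto QBQ^{T}$ is linear), and contained in $S_{+}(d)$ (because $v^{T}QBQ^{T}v=(Q^{T}v)^{T}B(Q^{T}v)\geqslant 0$ for $B\in S_{+}(d)$). The uniqueness established in the first step then forces $Q\Sigma Q^{T}=\Sigma$.

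I do not anticipate a genuine obstacle here; the only point requiring mild care is justifying uniqueness of the representing set $\Sigma$ inside $S(d)$ (rather than inside $S_{+}(d)$), but since $\Sigma$ and $Q\Sigma Q^{T}$ both lie in $S_{+}(d)$ the ambient uniqueness result applies directly. The entire argument is really a translation of the Peng representation of $G$ into the language of support functions.
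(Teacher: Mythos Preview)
Your proposal is correct and follows essentially the same approach as the paper: both recognize $2G$ as the support function of $\Sigma$ with respect to the trace inner product on $S(d)$, deduce uniqueness of $\Sigma$ from the fact that a bounded closed convex set is determined by its support function, and then use the cyclic property of trace to identify $I(G)$ with the stabilizer of $\Sigma$. The only cosmetic difference is that the paper spells out the support-function uniqueness via the Mazur separation theorem and Riesz representation, whereas you invoke it as a known result; your explicit verification that $Q\Sigma Q^{T}$ is again bounded, closed, convex and in $S_{+}(d)$ matches the paper's brief remark to the same effect.
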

\begin{proof}
It suffices to show the uniqueness of $\Sigma$, and (\ref{rep of I(G)})
will follow immediately from the commutativity of the trace operator
and the uniqueness of $\Sigma.$ Note that for any $Q\in GL(d,\mathbb{R}),$
$Q\Sigma Q^{T}$ is also a bounded, closed and convex subset of $S_{+}(d).$

Introduce a symmetric bilinear form $\langle\cdot,\cdot\rangle_{\mbox{tr}}$
on the finite dimensional vector space $S(d)$ by 
\[
\langle A_{1},A_{2}\rangle_{\mbox{tr}}=\mbox{tr}(A_{1}A_{2}),\ A_{1},A_{2}\in S(d).
\]
It is easy to check that $\langle\cdot,\cdot\rangle_{\mbox{tr}}$
is indeed an inner product, thus $(S(d),\langle\cdot,\cdot\rangle_{\mbox{tr}})$
is a finite dimensional Hilbert space. The form $\|\cdot\|_{tr}$
induced by $\langle\cdot,\cdot\rangle_{\mbox{tr}}$ is equivalent
to any other matrix norm on $S(d)$ since $S(d)$ is finite dimensional.

Let $\Sigma_{1},\Sigma_{2}$ be two bounded, closed and convex subsets
of $S_{+}(d),$ such that 
\[
\sup_{B\in\Sigma_{1}}\mbox{tr}(AB)=\sup_{B\in\Sigma_{2}}\mbox{tr}(AB),\ \forall A\in S(d).
\]
If $\Sigma_{1}\neq\Sigma_{2},$ without loss of generality assume
that $B_{0}\in\Sigma_{2}\backslash\Sigma_{1}.$ According to the Mazur
separation theorem in functional analysis (see \cite{yosida1980functional}),
there exists a bounded linear functional $f\in S(d)^{*}$ and some
$\alpha\in\mathbb{R},$ such that 
\[
f(B)<\alpha<f(B_{0}),\ \forall B\in\Sigma_{1}.
\]
By the Riesz representation theorem, there exists a unique $A^{*}\in S(d),$
such that 
\[
f(B)=\langle A^{*},B\rangle_{\mbox{tr}}=\mbox{tr}(A^{*}B),\ \forall B\in S(d).
\]
It follows that 
\[
\sup_{B\in\Sigma_{1}}\mbox{tr}(A^{*}B)\leqslant\alpha<\mbox{tr}(A^{*}B_{0})\leqslant\sup_{B\in\Sigma_{2}}\mbox{tr}(A^{*}B),
\]
which is a contradiction. Therefore, $\Sigma_{1}=\Sigma_{2}.$
\end{proof}

We list some examples for the invariant groups $I(G)$ of different
$G$-functions.
\begin{example}
If $\Sigma=\{0\},$ then it is obvious that $I(G)=GL(d,\mathbb{R}),$
which is a noncompact group.
\end{example}

\begin{example}
\label{Example finite group}It is possible that $I(G)$ is a finite
group.

Consider $\Sigma$ is the set of diagonal matrices 
\[
\Lambda=\mbox{diag}(\lambda_{1},\cdots,\lambda_{d})
\]
such that each $\lambda_{\alpha}\in[0,1],$ then $\Sigma$ is a bounded,
closed and convex subset of $S_{+}(d).$ We claim that 
\begin{equation}
I(G)=\{(\pm e_{\sigma(1)},\cdots,\pm e_{\sigma(d)}):\ \sigma\mbox{ is a permutation of order \ensuremath{d}}\},\label{finite group}
\end{equation}
where $\{e_{1},\cdots,e_{d}\}$ is the standard orthonormal basis
of $\mathbb{R}^{d}$, each $e_{i}$ being regarded as a column vector. 

In fact, if $Q\in GL(d,\mathbb{R})$ has the form (\ref{finite group}),
by direct computation one can show easily that 
\begin{equation}
Q\Sigma Q^{T}=\Sigma.\label{invariant Q Sigma}
\end{equation}
Conversely, if $Q$ satisfies (\ref{invariant Q Sigma}), by choosing
\[
\Lambda=\mbox{diag}(1,0,\cdots,0),
\]
we know that 
\[
(Q\Lambda Q^{T})_{\beta}^{\alpha}=Q_{1}^{\alpha}Q_{1}^{\beta}.
\]
Therefore, if $Q\Lambda Q^{T}\in\Sigma,$ the first column of $Q$
must contain exactly one nonzero element $q_{1}$ such that $q_{1}^{2}\leqslant1$.
Similarly for other columns of $Q$. Moreover, the corresponding nonzero
elements in any two different columns of $Q$ must be in different
rows, otherwise $Q$ will be degenerate. Consequently, $Q$ has the
form 
\[
Q=(q_{1}e_{\sigma(1)},\cdots,q_{d}e_{\sigma(d)})
\]
with $q_{i}^{2}\leqslant1$ ($i=1,2,\cdots,d$). On the other hand,
for the identity matrix $I_{d},$ there exists $\Lambda\in\Sigma,$
such that 
\[
Q\Lambda Q^{T}=I_{d}.
\]
By taking determinants on both sides, we have 
\[
q_{1}^{2}\cdots q_{d}^{2}\mbox{det}(\Lambda)=1,
\]
which implies that $q_{\alpha}=\pm1$ ($\alpha=1,2,\cdots,d$). Therefore,
$Q$ has the form of (\ref{finite group}).

Note that in this case $I(G)$ is a finite subgroup of the orthogonal
group $O(d)$ with order $2^{d}d!.$ Moreover, $G$ is given by 
\[
G(A)=\frac{1}{2}\sum_{\alpha=1}^{d}(A_{\alpha}^{\alpha})^{+},\ \forall A\in S(d).
\]

\end{example}

\begin{example}
\label{example O(d)}Now we give some examples of $G$ such that $I(G)=O(d).$
Such case will be our main interest in this paper.

(1) $\Sigma=\{I_{d}\}.$ 

Obviously (\ref{invariant Q Sigma}) is equivalent to $Q\in O(d).$ 

This corresponds to the case of classical Brownian motion, in which
\[
G(A)=\frac{1}{2}\mbox{tr}(A)
\]
and the generator is $\frac{1}{2}\Delta.$ 

(2) $\Sigma$ is given by the segment joining $\lambda I_{d}$ and
$\mu I_{d}$, where $0\leqslant\lambda<\mu.$

If $Q\in GL(d,\mathbb{R})$ such that (\ref{invariant Q Sigma}) holds,
then 
\[
\mu QQ^{T}=tI_{d},
\]
for some $t\in[\lambda,\mu].$ On the other hand, there exists some
$t'\in[\lambda,\mu]$ such that
\[
t'QQ^{T}=\mu I_{d}.
\]
The only possibility is that $QQ^{T}=I_{d},$ which means $Q\in O(d).$
The converse is trivial.

In this case, $G$ is given by 
\[
G(A)=\frac{1}{2}(\mu(\mbox{tr}A)^{+}-\lambda(\mbox{tr}A)^{-}).
\]
The corresponding $G$-heat equation can be regarded as the generalization
of the one-dimensional Barenblatt equation to higher dimensions.

(3) $\Sigma$ is given by the subset of matrices $B\in S_{+}(d)$
such that the eigenvalues of $B$ lie in the bounded interval $[\lambda,\mu]$,
where $0\leqslant\lambda<\mu.$ Equivalently,
\[
\Sigma=\{B\in S_{+}(d):\ \lambda\leqslant x^{T}Bx\leqslant\mu,\ \forall x\in\mathbb{R}^{d}\mbox{ with \ensuremath{|x|=1}}\}.
\]
It follows that $\Sigma$ is a bounded, closed and convex subset of
$S_{+}(d).$

Since $\Sigma$ is characterized by eigenvalues, and the eigenvalues
of a symmetric matrix is preserved under change of orthonormal basis,
it follows that for any $Q\in O(d),$ (\ref{invariant Q Sigma}) holds.
Conversely, let $Q\in GL(d,\mathbb{R})$ with (\ref{invariant Q Sigma}).
Then there exists $B_{1},B_{2}\in\Sigma$, such that 
\[
\mu QQ^{T}=B_{1},\ QB_{2}Q^{T}=\mu I_{d}.
\]
It follows that all eigenvalues of $QQ^{T}$ lie in $[\frac{\lambda}{\mu},1],$
and 
\[
\mbox{det}(QQ^{T})\mbox{det}(B_{2})=\mu^{d}.
\]
Therefore, the only possibility is that all eigenvalues of $QQ^{T}$
are equal to $1,$ which implies that $Q$ is an orthogonal matrix.

In this case $G$ can be expressed by 
\begin{eqnarray*}
G(A) & = & \frac{1}{2}\sup_{B\in\Sigma}\mbox{tr}(AB)\\
 & = & \frac{1}{2}\sup_{P\in O(d)}\sup_{\lambda\leqslant c_{1},\cdots,c_{d}\leqslant\mu}\mbox{tr}(AP^{T}\mbox{diag}(c_{1},\cdots,c_{d})P)\\
 & = & \frac{1}{2}\sup_{P\in O(d)}\sup_{\lambda\leqslant c_{1},\cdots,c_{d}\leqslant\mu}\mbox{tr}(PAP^{T}\mbox{diag}(c_{1},\cdots,c_{d}))\\
 & = & \frac{1}{2}\sup_{P\in O(d)}\sup_{\lambda\leqslant c_{1},\cdots,c_{d}\leqslant\mu}\sum_{\alpha=1}^{d}c_{\alpha}(PAP^{T})_{\alpha}^{\alpha}\\
 & = & \frac{1}{2}\sup_{P\in O(d)}\sum_{\alpha=1}^{d}(\mu((PAP^{T})_{\alpha}^{\alpha})^{+}-\lambda((PAP^{T})_{\alpha}^{\alpha})^{-}).
\end{eqnarray*}

\end{example}

Similar to Example \ref{example O(d)}, for those $\Sigma$'s characterized
by eigenvalues, we can construct a large class of $G$ such that $I(G)=O(d).$
\begin{rem}
If $\Sigma$ has at least one nondegenerate element, that is, there
exists some positive definite matrix $B_{0}\in\Sigma,$ then $I(G)$
is a compact group. In fact, if we introduce a matrix norm $\|\cdot\|_{B_{0}}$
on the space $\mbox{Mat}(d,\mathbb{R})$ of real $d\times d$ matrices
by 
\[
\|A\|_{B_{0}}=\sqrt{\mbox{tr}(AB_{0}A^{T})},\ A\in\mbox{Mat}(d,\mathbb{R}),
\]
it follows that 
\[
\sup_{Q\in I(G)}\|Q\|_{B_{0}}=\sup_{Q\in I(G)}\sqrt{\mbox{tr}(QB_{0}Q^{T})}\leqslant\sup_{B\in\Sigma}\sqrt{\mbox{tr}(B)}<\infty,
\]
since $\Sigma$ is bounded. It is obvious that $I(G)$ is closed.
Therefore, it is compact.
\end{rem}

Now assume that $(M,g)$ is a $d$-dimensional compact Riemannian
manifold. If we allow explosion of a nonlinear diffusion process at
some finite time, then the arguments below will carry through on a
noncompact Riemannian manifold as long as the time scope is restricted
from $0$ up to the explosion. Here we only consider the compact case,
in which explosion is not possible.

We first recall some basics about frame bundles, which is the central
concept in the horizontal lifting construction. For a systematic introduction
please refer to \cite{chern1999lectures}, \cite{kobayashi1996foundations}.

Let $\mathcal{F}(M)$ be the total frame bundle over $M$ defined
by 
\[
\mathcal{F}(M)=\cup_{x\in M}\mathcal{F}_{x}(M),
\]
where the fibre $\mathcal{F}_{x}(M)$ is the set of all frames (bases
of the tangent space $T_{x}(M)$) at $x$. A frame $\xi=(\xi_{1},\cdots,\xi_{d})\in\mathcal{F}_{x}(M)$
can be equivalently regarded as a linear isomorphism from $\mathbb{R}^{d}$
to $T_{x}M$ (also denoted by $\xi$) if we let 
\[
\xi(e_{\alpha})=\xi_{\alpha},\ \alpha=1,2,\cdots,d,
\]
and extend linearly to $\mathbb{R}^{d},$ where we always fix $\{e_{1},\cdots,e_{d}\}$
to be the standard orthonormal basis of $\mathbb{R}^{d}.$ $\mathcal{F}(M)$
is a principal bundle with structure group $GL(d,\mathbb{R})$ acting
along fibers from the right.

Fix a frame $\xi\in\mathcal{F}_{x}(M)$. A vector $X\in T_{\xi}\mathcal{F}(M)$
is called vertical if it is tangent to the fibre $\mathcal{F}_{x}(M)$.
The space of vertical vectors at $\xi$ is called the vertical subspace,
and it is denoted by $V_{\xi}\mathcal{F}(M).$ $V_{\xi}\mathcal{F}(M)$
is a $d^{2}$-dimensional vector space, which is independent of the
Riemannian structure. 

A smooth curve $\xi_{t}=(\xi_{1,t}\cdots,\xi_{d,t})\in\mathcal{F}(M)$
is called horizontal if $\xi_{\alpha,t}$ is a parallel vector field
along the projection curve $x_{t}=\pi(\xi_{t})$ for each $\alpha=1,2,\cdots,d.$
Given a smooth curve $x_{t}\in M$ and a frame $\xi_{0}=(\xi_{1},\cdots,\xi_{d})\in\mathcal{F}_{x_{0}}(M),$
by solving a first order linear ODE, we can determine a unique parallel
vector field $\xi_{\alpha,t}$ along $x_{t}$ with $\xi_{\alpha,0}=\xi_{\alpha}$
for each $\alpha=1,2,\cdots,d.$ The smooth curve 
\[
\xi_{t}=(\xi_{1,t},\cdots,\xi_{d,t})\in\mathcal{F}(M)
\]
is then the unique horizontal curve with $x_{t}=\pi(\xi_{t})$ and
initial position $\xi_{0}$. $\xi_{t}$ is called the horizontal lifting
of $x_{t}$ from $\xi_{0}.$ A vector $X\in T_{\xi}\mathcal{F}(M)$
is called horizontal if it is tangent to a horizontal curve through
$\xi$. The space of horizontal vectors at $\xi$ is called the horizontal
subspace, and it is denoted by $H_{\xi}\mathcal{F}(M).$ It is a $d$-dimensional
vector space characterized by the Levi-Civita connection $\nabla$. 

As $\xi$ varies, $V_{\xi}\mathcal{F}(M)$ (respectively, $H_{\xi}\mathcal{F}(M)$)
determines a vertical (respectively, horizontal) subspace field on
$M.$ The following result reveals the fundamental structure of $\mathcal{F}(M)$.
\begin{thm}
\label{bundle decomposition}The horizontal subspace field $H\mathcal{F}(M)$,
which is determined by $\nabla,$ has the following properties.

(1) For each $\xi\in\mathcal{F}_{x}(M),$ the tangent space $T_{\xi}\mathcal{F}(M)$
has the decomposition
\[
T_{\xi}\mathcal{F}(M)=H_{\xi}\mathcal{F}(M)\oplus V_{\xi}\mathcal{F}(M).
\]
Moreover, $H_{\xi}\mathcal{F}(M)$ is isomorphic to $T_{x}M$ under
the canonical projection $\pi:\ \mathcal{F}(M)\rightarrow M.$ 

(2) $H\mathcal{F}(M)$ is invariant under actions by the structure
group $GL(d,\mathbb{R}).$ More precisely, for any $\xi\in\mathcal{F}(M),\ Q\in GL(d,\mathbb{R}),$
\[
Q_{*}(H_{\xi}\mathcal{F}(M))=H_{\xi Q}\mathcal{F}(M).
\]

\end{thm}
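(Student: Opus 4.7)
The plan is to prove both parts by leveraging the characterization of horizontal vectors as tangents to horizontal lifts, combined with dimension counting and the linearity of parallel transport. I first set up the basic dimension facts: the fibre $\mathcal{F}_x(M)$ is a $GL(d,\mathbb{R})$-orbit, so $\dim V_\xi \mathcal{F}(M) = d^2$, while $\dim \mathcal{F}(M) = d + d^2$, so it suffices to show $H_\xi \mathcal{F}(M) \cap V_\xi \mathcal{F}(M) = \{0\}$ and that $H_\xi \mathcal{F}(M)$ has dimension $d$.

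For part (1), I would introduce the horizontal lift map $v \mapsto \hat v$ for $v \in T_x M$: given any smooth curve $x_t$ in $M$ with $x_0 = x$, $\dot x_0 = v$, solve the linear parallel transport ODE along $x_t$ with initial frame $\xi$ to obtain the horizontal curve $\xi_t$, and set $\hat v := \dot\xi_0 \in T_\xi \mathcal{F}(M)$. A standard (and routine) check using the local ODE shows $\hat v$ is independent of the choice of representative curve and depends linearly on $v$, and by construction $\pi_*(\hat v) = v$. Thus $H_\xi \mathcal{F}(M) = \{\hat v : v \in T_x M\}$ is $d$-dimensional, and $\pi_*$ restricted to $H_\xi \mathcal{F}(M)$ is a linear isomorphism onto $T_x M$. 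Since $V_\xi \mathcal{F}(M) = \ker \pi_*$ (because $\pi$ is a submersion with fibre $\mathcal{F}_x(M)$), we get $H_\xi \mathcal{F}(M) \cap V_\xi \mathcal{F}(M) = \{0\}$, and the direct sum decomposition follows from dimension count.

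For part (2), the key observation is that parallel transport commutes with constant $\mathbb{R}$-linear combinations of frame vectors. Take $X \in H_\xi \mathcal{F}(M)$ and realize it as $\dot\xi_0$ for some horizontal curve $\xi_t = (\xi_{1,t},\ldots,\xi_{d,t})$ projecting to $x_t$. Consider the right-translated curve $R_Q(\xi_t) = \xi_t Q$, whose $\alpha$-th frame vector is $\sum_\beta Q^\beta_\alpha \xi_{\beta,t}$. Since each $\xi_{\beta,t}$ is parallel along $x_t$ and parallel transport along $x_t$ is an $\mathbb{R}$-linear map of fibres of $TM$, this linear combination with constant coefficients is again parallel along $x_t$. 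Hence $\xi_t Q$ is horizontal, and differentiating at $t=0$ gives $(R_Q)_*(X) \in H_{\xi Q}\mathcal{F}(M)$. Thus $(R_Q)_*(H_\xi\mathcal{F}(M)) \subseteq H_{\xi Q}\mathcal{F}(M)$, and since $R_Q$ is a diffeomorphism and both subspaces have dimension $d$, equality follows.

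The only slightly delicate step is confirming that $\hat v$ is well-defined independently of the curve $x_t$ chosen to represent $v$; everything else is either linear algebra, the defining property of a principal bundle submersion, or the elementary linearity of the parallel transport ODE. I do not expect any genuine obstacle, as this is an instance of the standard Ehresmann-connection structure associated to the Levi-Civita connection applied to the frame bundle; the content we actually need going forward is the isomorphism $H_\xi\mathcal{F}(M) \cong T_x M$ and the equivariance of $H\mathcal{F}(M)$ under $GL(d,\mathbb{R})$, both of which fall out of the argument above.
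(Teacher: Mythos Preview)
The paper does not actually prove this theorem: it is stated as a standard structural fact about frame bundles, with the reader referred to \cite{chern1999lectures} and \cite{kobayashi1996foundations} for the background. Your argument is correct and is essentially the textbook proof one finds in those references: the horizontal lift $v\mapsto\hat v$ gives the isomorphism $H_\xi\mathcal{F}(M)\cong T_xM$, the identification $V_\xi\mathcal{F}(M)=\ker\pi_*$ together with a dimension count yields the direct sum, and the $GL(d,\mathbb{R})$-equivariance follows from the linearity of parallel transport with respect to constant linear combinations of frame vectors. The well-definedness of $\hat v$ that you flag is indeed routine in local coordinates, since the parallel transport ODE for the frame components depends only on $x_t$ and $\dot x_t$ at each instant, so $\dot\xi_0$ depends only on $\dot x_0=v$.
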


It should be pointed out that given any horizontal subspace field
$H\mathcal{F}(M)$ satisfying the two properties in Theorem \ref{bundle decomposition},
there exists an affine connection $\nabla^{H}$ such that $H\mathcal{F}(M)$
is the horizontal subspace field determined by $\nabla^{H}.$ 

On $\mathcal{F}(M)$ there is a canonical way to define a frame field
globally, which is not always possible on a general Riemannian manifold.
This makes $\mathcal{F}(M)$ simpler than the base space $M$ in some
sense. Fix $w\in\mathbb{R}^{d}.$ For any $\xi\in\mathcal{F}_{x}(M)$
regarded as a linear isomorphism $\xi:\ \mathbb{R}^{d}\rightarrow T_{x}M,$
$\xi(w)$ is a tangent vector in $T_{x}M$. By Theorem \ref{bundle decomposition}
(1), $\xi(w)$ corresponds to a unique vector $H_{w}(\xi)\in H_{\xi}\mathcal{F}(M)$.
It follows that $H_{w}$ is a globally defined horizontal vector field
on $\mathcal{F}(M).$ If we take $w=e_{\alpha}$ ($\alpha=1,2,\cdots,d$),
then we obtain a family of horizontal vector fields $\{H_{e_{1}},\cdots,H_{e_{d}}\}$
as a basis of the horizontal subspace $H_{\xi}\mathcal{F}(M)$ at
each frame $\xi\in\mathcal{F}(M).$ $\{H_{e_{1}},\cdots,H_{e_{d}}\}$
are called the fundamental horizontal fields of $\mathcal{F}(M)$,
simply denoted by $\{H_{1},\cdots,H_{d}\}.$

Now we introduce the concept of development and anti-development (see
\cite{hsu2002stochastic}), which is crucial in the construction of
$G$-Brownian motion on $M.$ Assume that $x_{t}\in M$ is a smooth
curve and $\xi_{t}$ is the horizontal lifting of $x_{t}$ from $\xi_{0}.$
Then we can determine a smooth curve 
\[
w_{t}=\int_{0}^{t}\xi_{s}^{-1}\dot{x}_{s}ds\in\mathbb{R}^{d}
\]
starting from $0$ ($w_{t}$ is regarded as a column vector in $\mathbb{R}^{d}$).
$w_{t}$ is called the anti-development of $x_{t}$ in $\mathbb{R}^{d}$
with respect to $\xi_{0}.$ If $\xi_{t}$ and $\eta_{t}$ are two
horizontal liftings of $x_{t}$ with $\xi_{0}=\eta_{0}Q$ for some
$Q\in GL(d,\mathbb{R}),$ then the two corresponding anti-developments
are related by 
\[
w_{t}^{\eta}=Qw_{t}^{\xi}.
\]
The fundamental relation between the anti-development $w_{t}$ of
$x_{t}$ and the horizontal lifting $\xi_{t}$ is the following ODE
on $\mathcal{F}(M):$
\begin{equation}
d\xi_{t}=H_{\alpha}(\xi_{t})dw_{t}^{\alpha}.\label{Horizontal ODE}
\end{equation}
Conversely, given a smooth curve $w_{t}\in\mathbb{R}^{d}$ starting
from $0,$ by solving the ODE (\ref{Horizontal ODE}) on $\mathcal{F}(M)$
with initial frame $\xi_{0}$, we obtain a horizontal curve $\xi_{t}\in\mathcal{F}(M).$
The projection $x_{t}=\pi(\xi_{t})$ is called the development of
$w_{t}$ in $M$ with respect to $\xi_{0}.$ If we use another initial
frame $\eta_{0}=\xi_{0}Q^{-1}$ and the driven process $v_{t}=Qw_{t}\in\mathbb{R}^{d},$
by solving (\ref{Horizontal ODE}) from $\eta_{0}$ and projection
onto $M$ we obtain the same curve $x_{t}.$ In this way, we obtain
a one-to-one correspondence of the Euclidean curve $w_{t}$ and the
manifold curve $x_{t}$ via the horizontal curve $\xi_{t}$ in $\mathcal{F}(M)$,
which depends on the initial frame $\xi_{0}.$ The procedure of getting
$x_{t}$ from $w_{t}$ is usually known as ``rolling without slipping''.

A crucial point should be emphasized here is that such procedure is
carried out by solving the ODE (\ref{Horizontal ODE}) in the pathwise
sense, which fits well in the context of rough paths if the Euclidean
curve $w_{t}$ is interpreted as a rough path. In this case, (\ref{Horizontal ODE})
should be interpreted as an RDE. This is an important reason why we
need to develop the notion of Stratonovich type SDEs on a differentiable
manifold.

For a general Euclidean $G$-Brownian motion $B_{t}$, from Section
6 we are able to solve (\ref{Horizontal ODE}) pathwisely if the driven
curve $dw_{t}$ is replaced by $ $$dB_{t}$ in the Stratonovich sense
(or in the RDE sense). By projecting the solution $\xi_{t}\in\mathcal{F}(M)$
to the manifold $M,$ we obtain a process $X_{t}\in M$ pathwisely
which depends on the initial position $x_{0}$ and the initial frame
$\xi_{0}\in\mathcal{F}_{x_{0}}(M).$ A disadvantage of using the total
frame bundle $\mathcal{F}(M)$ is that in this way it is not possible
to write down the generating PDE governing the law of $X_{t}$ intrinsically
on $M$, which does not depend on the initial frame $\xi_{0}.$ Note
that the generating PDE of $\xi_{t}$ is well-defined on $\mathcal{F}(M)$
according to Theorem \ref{Theorem PDE of SDE on manifold}, which
takes the  form 
\begin{equation}
\frac{\partial u}{\partial t}-G((\widehat{H_{\alpha}H_{\beta}}u)_{1\leqslant\alpha,\beta\leqslant d})=0.\label{PDE on total frame bundle}
\end{equation}
The main reason for such disadvantage is that the PDE (\ref{PDE on total frame bundle})
is not invariant under actions by $GL(d,\mathbb{R})$ along fibers,
since the $G$-function does not have such kind of invariance.

To fix this issue, a possible way is to use the invariant group $I(G)$
of $G$ as the structure group, so that the generating PDE will be
invariant under actions by $I(G)$ along fibers due to the form (\ref{PDE on total frame bundle})
it takes. Therefore, we need to use a proper frame bundle (a submanifold
of $\mathcal{F}(M)$ which is a principal bundle over $M$ with structure
group $I(G)$ and fibers being a suitable class of frames) instead
of $\mathcal{F}(M)$. The fibers of such frame bundle should be preserved
by parallel transport so the fundamental horizontal fields can be
restricted on it and we are able to solve the RDE 
\[
d\xi_{t}=H_{\alpha}(\xi_{t})\circ dB_{t}^{\alpha}
\]
on the frame bundle. It will turn out that we are able to establish
the generating PDE of the projection process $X_{t}=\pi(\xi_{t})$
intrinsically on $M$, which does not depend on the initial frame.
Therefore, although as a process the sample paths of $X_{t}$ depends
on the initial frame (this is not surprising since in the Euclidean
case we also don't have a canonical Brownian motion if we do not fix
the frame $\{e_{1},\cdots,e_{d}\}$ in advance), the law of $X_{t}$
will not. In this way we obtain a canonical PDE on $M$ associated
with the original $G$-function, which can be regarded as the generating
PDE governing the law of $X_{t}.$ The process $X_{t}$ can be defined
as a $G$-Brownian motion on $M$ and the generating PDE will play
the role of the canonical Wiener measure (the solution of the martingale
problem for the operator $\frac{1}{2}\Delta_{M}$) on $M$ in a nonlinear
setting.

The construction of such frame bundle for a $G$-function with an
arbitrary invariant group $I(G)$ is not clear to us at the moment.
However, in the case when $I(G)$ is the orthogonal group $O(d)$,
which contains a wide and interesting class of $G$-functions, there
is a very natural frame bundle serving us well for the purpose: the
orthonormal frame bundle $\mathcal{O}(M).$

From now on, let $G$ be given by (\ref{rep of G in section 7}) with
$I(G)=O(d).$ 

The orthonormal frame bundle $\mathcal{O}(M)$ over $M$ is defined
by 
\[
\mathcal{O}(M)=\cup_{x\in M}\mathcal{O}_{x}(M),
\]
where the fibre $\mathcal{O}_{x}(M)$ is the set of orthonormal bases
of $T_{x}M$. Since $M$ is compact, $\mathcal{O}(M)$ is a compact
submanifold of $\mathcal{F}(M)$. Moreover, since the Levi-Civita
connection is compatible with the Riemannian metric $g,$ parallel
transport preserves the fibers of $\mathcal{O}(M).$ Therefore, statements
about $\mathcal{F}(M)$ before on the horizontal aspect can be carried
through in the case of $\mathcal{O}(M)$ directly. In particular,
the fundamental horizontal fields $H_{\alpha}$ can be restricted
to $\mathcal{O}(M).$ The only difference is in the vertical direction:
the fibre becomes orthonormal frames, and the structure group which
acts on fibers becomes the orthogonal group; the dimension in the
vertical direction is reduced to $\frac{d(d-1)}{2}$.

For $\xi\in\mathcal{O}_{x}(M),$ according to Section 6, let $U_{t,\xi}\in\mathcal{O}(M)$
be the unique solution of the following RDE over $[0,1]$: 
\begin{equation}
\begin{cases}
dU_{t,\xi}=H_{\alpha}(U_{t,\xi})\circ dB_{t}^{\alpha},\\
U_{0,\xi}=\xi.
\end{cases}\label{RDE on O(M)}
\end{equation}
Let $X_{t,\xi}=\pi(U_{t,\xi})$ be the projection of $U_{t,\xi}$
onto $M.$ 
\begin{defn}
$X_{t,\xi}$ is called a $G$-Brownian motion on the Riemannian manifold
$M$ with respect to the the initial orthonormal frame $\xi\in\mathcal{O}_{x}(M),$
and $U_{t,\xi}$ is called a horizontal $G$-Brownian motion in $\mathcal{O}(M)$
starting from $\xi.$ 
\end{defn}

For any $\varphi\in C_{Lip}(M)$ (under the Riemannian distance),
define 
\[
u(t,\xi)=\mathbb{E}^{G}[\varphi(X_{t,\xi})],\ (t,\xi)\in[0,1]\times\mathcal{O}(M).
\]
Let $\hat{\varphi}=\varphi\circ\pi$ be the lifting of $\varphi$
to $\mathcal{O}(M).$ It is obvious that 
\[
u(t,\xi)=\mathbb{E}^{G}[\hat{\varphi}(U_{t,\xi})].
\]
By Theorem \ref{Theorem PDE of SDE on manifold}, we know that $u(t,\xi)$
is the unique viscosity solution of the following nonlinear parabolic
PDE:

\begin{equation}
\begin{cases}
\frac{\partial u}{\partial t}-G((\widehat{H_{\alpha}H_{\beta}}u)_{1\leqslant\alpha,\beta\leqslant d})=0,\\
u(0,\xi)=\hat{\varphi}(\xi),
\end{cases}\label{PDE on O(M)}
\end{equation}
on $\mathcal{O}(M).$ 

The following result tells us that the law of $X_{t,\xi}$ depends
only on the initial position $x.$
\begin{prop}
\label{invariance along fibres}If $\xi,\eta\in\mathcal{O}_{x}(M),$
then 
\[
u(t,\xi)=u(t,\eta).
\]
\end{prop}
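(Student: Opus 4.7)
Since $\xi$ and $\eta$ are both orthonormal frames at the same point $x\in M$, there exists a unique $Q\in O(d)$ with $\eta=\xi Q$. The plan is to show that the function $v(t,\xi):=u(t,\xi Q)$ is a viscosity solution of the same PDE (\ref{PDE on O(M)}) with the same initial datum as $u$, and then invoke uniqueness from Theorem \ref{Theorem PDE of SDE on manifold}.

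The core step is a transformation law for the fundamental horizontal vector fields under the right action $R_Q:\mathcal{O}(M)\to\mathcal{O}(M)$, $R_Q(\xi)=\xi Q$. By definition of $H_w$ (the unique horizontal vector at $\xi$ projecting to $\xi(w)\in T_{\pi(\xi)}M$), together with Theorem \ref{bundle decomposition}(2) which says $(R_Q)_\ast$ preserves the horizontal subspace, and the fact that $\pi\circ R_Q=\pi$, one checks
\[
H_w(\xi Q)=(R_Q)_\ast H_{Qw}(\xi),\qquad w\in\mathbb{R}^d.
\]
Applying this twice and using linearity of $w\mapsto H_w$, I would derive for any smooth $f$ on $\mathcal{O}(M)$ the key identity
\[
(H_\alpha H_\beta f)(\xi Q)=Q^\gamma_\alpha Q^\delta_\beta\bigl(H_\gamma H_\delta(f\circ R_Q)\bigr)(\xi),
\]
and after symmetrization, setting $M_{\gamma\delta}(\xi):=(\widehat{H_\gamma H_\delta}(f\circ R_Q))(\xi)$,
\[
\bigl((\widehat{H_\alpha H_\beta}f)(\xi Q)\bigr)_{\alpha,\beta}=Q^T M(\xi)\,Q.
\]
The orthogonal invariance hypothesis $I(G)=O(d)$ then gives $G(Q^T M(\xi)Q)=G(M(\xi))$, which is precisely the place where the assumption on $G$ enters.

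Applying this identity with $f=u(t,\cdot)$ and using $\partial_t v(t,\xi)=\partial_t u(t,\xi Q)$, I conclude that $v$ satisfies (\ref{PDE on O(M)}) wherever $u$ does; the viscosity solution property transfers automatically because $R_Q$ is a smooth diffeomorphism, so test functions for $v$ at $(t_0,\xi_0)$ are in bijection with test functions for $u$ at $(t_0,\xi_0 Q)$ via composition with $R_{Q^{-1}}$. The initial condition matches because $\hat\varphi=\varphi\circ\pi$ is constant along fibres, so $v(0,\xi)=\hat\varphi(\xi Q)=\varphi(\pi(\xi))=\hat\varphi(\xi)$. Uniqueness of viscosity solutions then yields $v\equiv u$, that is $u(t,\xi Q)=u(t,\xi)$ for every $\xi\in\mathcal{O}(M)$ and every $Q\in O(d)$; specializing to the $Q$ with $\eta=\xi Q$ gives the claim.

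The main obstacle will be the bookkeeping of the transformation rule for $H_\alpha$ under $R_Q$ and the verification that the symmetrized second-order bracket transforms by $Q^T(\cdot)Q$; once this is in place, the rest is a direct consequence of $I(G)=O(d)$ and uniqueness. A minor technical point is the transfer of the viscosity property through $R_Q$, but since $R_Q$ is a smooth diffeomorphism of $\mathcal{O}(M)$ this is routine.
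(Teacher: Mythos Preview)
Your argument is correct, and it takes a genuinely different route from the paper's proof.

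The paper argues probabilistically: given $\xi=\eta Q$ with $Q\in O(d)$, it introduces the transformed $G$-Brownian motion $\widetilde{B}_t=QB_t$ and the associated horizontal lift $W_{t,\zeta}$ solving $dW_{t,\zeta}=H_\alpha(W_{t,\zeta})\circ d\widetilde{B}_t^\alpha$. Rewriting this as an RDE driven by $B$ with generating vector fields $H_\beta Q^\beta_\alpha$, Theorem~\ref{Theorem PDE of SDE on manifold} together with $I(G)=O(d)$ shows that $v(t,\zeta):=\mathbb{E}^G[\hat\varphi(W_{t,\zeta})]$ solves the \emph{same} PDE (\ref{PDE on O(M)}), hence $v\equiv u$ by uniqueness. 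The proof is then completed by the pathwise development identity $\pi(U_{t,\xi})=\pi(W_{t,\eta})$, which gives $u(t,\xi)=v(t,\eta)=u(t,\eta)$.

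Your approach is purely PDE/geometric: you never introduce a second process, but instead pull back $u$ along the right action $R_Q$ and use the equivariance formula $H_w(\xi Q)=(R_Q)_\ast H_{Qw}(\xi)$ to show directly that $(\widehat{H_\alpha H_\beta}u)(\xi Q)=Q^T\bigl((\widehat{H_\gamma H_\delta}(u\circ R_Q))(\xi)\bigr)Q$, and then $I(G)=O(d)$ plus uniqueness finishes. This is arguably more elementary, since it bypasses both the auxiliary process $W$ and the development/anti-development correspondence; the paper's route, on the other hand, makes the probabilistic content (invariance of the law of $B_t$ under $O(d)$) more transparent. Both proofs hinge on the same two ingredients: the identity $I(G)=O(d)$ and uniqueness of viscosity solutions of (\ref{PDE on O(M)}).
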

\begin{proof}
For any fixed orthogonal matrix $Q\in O(d),$ let $\widetilde{B}_{t}=QB_{t},$
which is an orthogonal transformation of the original $G$-Brownian
motion $B_{t},$ and let $W_{t,\zeta}$ be the pathwise solution of
the following RDE over $[0,1]$:

\begin{equation}
\begin{cases}
dW_{t,\zeta}=H_{\alpha}(W_{t,\zeta})\circ d\widetilde{B}_{t}^{\alpha},\\
W_{0,\zeta}=\zeta\in\mathcal{O}(M),
\end{cases}\label{transformed RDE on O(M)}
\end{equation}
on $\mathcal{O}(M).$ If we regard $\widetilde{B}_{t}$ as the solution
of the SDE 
\[
d\widetilde{B}_{t}=Q_{\alpha}dB_{t}^{\alpha}
\]
starting from $0$ with constant coefficients, then the RDE (\ref{transformed RDE on O(M)})
is equivalent to 
\[
\begin{cases}
dW_{t,\zeta}=H_{\beta}(W_{t,\zeta})Q_{\alpha}^{\beta}\circ dB_{t}^{\alpha},\\
W_{0,\zeta}=\zeta,
\end{cases}
\]
in which the generating vector fields are $H_{\beta}Q_{\alpha}^{\beta}.$
Since the invariant group $I(G)$ of $G$ is the orthogonal group,
by Theorem \ref{Theorem PDE of SDE on manifold} we know that the
function 
\[
v(t,\zeta)=\mathbb{E}^{G}[\hat{\varphi}(W_{t,\zeta})],\ (t,\zeta)\in[0,1]\times\mathcal{O}(M)
\]
is the unique viscosity solution of the same PDE (\ref{PDE on O(M)})
on $\mathcal{O}(M).$ Therefore,
\[
u(t,\zeta)=v(t,\zeta),\ \forall(t,\zeta)\in[0,1]\times\mathcal{O}(M).
\]

Now since $\xi,\eta\in\mathcal{O}_{x}(M),$ there exists some $Q\in O(d)$
such that $\xi=\eta Q.$ Define $W_{t,\zeta}$ as before. By the previous
discussion on the relation between different anti-developments, we
know that 
\[
X_{t,\xi}=\pi(U_{t,\xi})=\pi(W_{t,\eta}),\ \forall t\in[0,1].
\]
Therefore, 
\begin{eqnarray*}
u(t,\xi) & = & \mathbb{E}^{G}[\varphi\circ\pi(U_{t,\xi})]\\
 & = & \mathbb{E}^{G}[\varphi\circ\pi(W_{t,\eta})]\\
 & = & v(t,\eta)\\
 & = & u(t,\eta).
\end{eqnarray*}

\end{proof}

From Proposition \ref{invariance along fibres}, we know that $u(t,\xi)$
is invariant along each fibre. Therefore, the law of $X_{t,\xi}$
depends only on the initial position $x\in M$ but not on the initial
frame $\xi.$ We use $u(t,x)$ to denote $u(t,\xi),$ where $x$ is
the base point of $\xi.$ In this situation it is possible to establish
the PDE for $u(t,x)$ intrinsically on $M$ by ``projecting down''
(\ref{PDE on O(M)}), which should become the generating PDE governing
the law of $X_{t,\xi}$.

For any $u\in C^{\infty}(M),$ take an orthonormal frame $\xi=(\xi_{1},\cdots,\xi_{d})\in\mathcal{O}_{x}(M)$,
and consider the quantity 
\[
G((\mbox{Hess}u(\xi_{\alpha},\xi_{\beta}))_{1\leqslant\alpha,\beta\leqslant d}).
\]
Since $I(G)=O(d),$ it is easy to see that the above quantity is independent
of the orthonormal frame $\xi\in\mathcal{O}_{x}(M).$ In other words,
$G$ can be regarded as a functional of the Hessian, and the nonlinear
second order differential operator $G(\mbox{Hess}(\cdot))$ is globally
well-defined on $M.$ 

Now we have the following result.
\begin{thm}
\label{Theorem PDE on M}$u(t,x)$ is the unique viscosity solution
of the following nonlinear parabolic PDE on $M:$

\begin{equation}
\begin{cases}
\frac{\partial u}{\partial t}-G(\mbox{Hess}u)=0,\\
u(0,x)=\varphi(x).
\end{cases}\label{generating PDE on M}
\end{equation}
\end{thm}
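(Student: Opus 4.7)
The plan is to descend the upstairs PDE \eqref{PDE on O(M)} on $\mathcal{O}(M)$, whose unique viscosity solution is $u(t,\xi)=\mathbb{E}^{G}[\hat\varphi(U_{t,\xi})]$, to a PDE on $M$, using that by Proposition \ref{invariance along fibres} the function $u(t,\xi)$ depends only on $\pi(\xi)\in M$. Thus $u(t,\xi)=u(t,\pi(\xi))$ with a slight abuse of notation, and the task is to identify the intrinsic operator on $M$ corresponding to $G((\widehat{H_\alpha H_\beta}\,\cdot\,)_{1\leqslant\alpha,\beta\leqslant d})$ and then transfer the viscosity property.

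The key computation is the following identity: for every $f\in C^{\infty}(M)$ and $\xi=(\xi_1,\dots,\xi_d)\in\mathcal{O}(M)$, the lift $\hat f=f\circ\pi$ satisfies
\[
H_{\alpha}H_{\beta}\hat f(\xi)\;=\;\mathrm{Hess}\,f(\xi_{\alpha},\xi_{\beta}).
\]
Indeed, $H_\beta\hat f(\xi)=\xi_\beta f$ because $\pi_\ast H_\beta(\xi)=\xi_\beta$, and differentiating again along a horizontal curve $\xi_t$ through $\xi$ with velocity $H_\alpha(\xi)$ the frame $(\xi_{1,t},\dots,\xi_{d,t})$ is parallel along $\pi(\xi_t)$, so $\nabla_{\xi_\alpha}\xi_\beta=0$ at $t=0$ and only the Hessian term survives. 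Symmetrising gives $\widehat{H_\alpha H_\beta}\hat f(\xi)=\mathrm{Hess}\,f(\xi_\alpha,\xi_\beta)$. Since $I(G)=O(d)$, the value $G((\mathrm{Hess}\,f(\xi_\alpha,\xi_\beta))_{\alpha,\beta})$ does not depend on the choice of orthonormal frame at $\pi(\xi)$, so it equals a well-defined functional $G(\mathrm{Hess}\,f)(x)$ on $M$; this is the operator appearing in \eqref{generating PDE on M}.

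To verify the viscosity subsolution property of $u(t,x)$ on $M$, suppose $v\in C^{2,3}([0,1]\times M)$ touches $u$ from above at $(t_0,x_0)\in(0,1)\times M$. Let $\hat v(t,\xi):=v(t,\pi(\xi))$; then $\hat v\in C^{2,3}([0,1]\times\mathcal{O}(M))$, it touches $u(\cdot,\cdot)$ (viewed on $\mathcal{O}(M)$) from above at every $\xi\in\mathcal{O}_{x_0}(M)$, and by \eqref{PDE on O(M)} one obtains
\[
\frac{\partial v}{\partial t}(t_0,x_0)-G\!\left((\widehat{H_\alpha H_\beta}\hat v(t_0,\xi))_{\alpha,\beta}\right)\leqslant0.
\]
Applying the identity above to $v(t_0,\cdot)$ in the second argument converts this to $\partial_t v(t_0,x_0)-G(\mathrm{Hess}\,v(t_0,x_0))\leqslant0$, which is exactly the viscosity subsolution inequality on $M$. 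The supersolution inequality is identical, and continuity of $u(t,x)$ follows from that of $u(t,\xi)$ together with compactness of fibres.

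For uniqueness one fixes any Riemannian metric (we already have $g$); as noted in the proof of Theorem \ref{Theorem PDE of SDE on manifold}, the operator $G(\mathrm{Hess}\,\cdot)$ inherits the sublinearity, degenerate ellipticity and uniform continuity of $G$, and $M$ is compact, so the comparison principle of Azagra--Ferrera--L\'opez-Mesas \cite{azagra2008viscosity} applies directly and yields uniqueness. The main subtlety to check carefully is the horizontal computation $H_\alpha H_\beta\hat f=\mathrm{Hess}\,f(\xi_\alpha,\xi_\beta)$ together with the $O(d)$-invariance that makes the right-hand side descend unambiguously to $M$; once this is in place, the descent of the viscosity property and the uniqueness are essentially routine.
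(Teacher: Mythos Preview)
Your proof is correct and follows essentially the same approach as the paper: both hinge on the identity $H_{\alpha}H_{\beta}\hat f(\xi)=\mathrm{Hess}\,f(\xi_\alpha,\xi_\beta)$ proved via a horizontal curve with parallel frame, together with the uniqueness result from \cite{azagra2008viscosity}. You are somewhat more explicit than the paper in spelling out the descent of the viscosity property by lifting test functions $v$ to $\hat v=v\circ\pi$, whereas the paper simply asserts that the identity suffices; this extra detail is a welcome clarification but not a different method.
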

\begin{proof}
It suffices to show that: if $f\in C^{\infty}(M),$ and $\hat{f}=f\circ\pi$
is the lifting of $f$ to $\mathcal{O}(M)$, then for any $\xi=(\xi_{1},\cdots,\xi_{d})\in\mathcal{O}_{x}(M),$
\[
\mbox{Hess}f(\xi_{\alpha},\xi_{\beta})(x)=H_{\alpha}H_{\beta}\hat{f}(\xi).
\]
Note that uniqueness follows from the same reason as pointed out in
the proof of Theorem \ref{Theorem PDE of SDE on manifold} by using
results in \cite{azagra2008viscosity}.

In fact, for any $\xi=(\xi_{1},\cdots,\xi_{d})\in\mathcal{O}_{x}(M),$
let $\xi_{t}$ be a horizontal curve through $\xi$ such that $H_{\beta}(\xi)$
is tangent to $\xi_{t}$ at $t=0,$ and let $x_{t}$ be its projection
onto $M.$ It follows that the tangent vector of $x_{t}$ at $t=0$
is $\xi_{\beta},$ and 
\begin{eqnarray*}
H_{\beta}\hat{f}(\xi) & = & \frac{d\hat{f}(\xi_{t})}{dt}|_{t=0}\\
 & = & \frac{df(x_{t})}{dt}|_{t=0}\\
 & = & \langle\xi_{\beta},\nabla f(x)\rangle_{g}.
\end{eqnarray*}
Therefore, if now assume that $\xi_{t}$ is a horizontal curve through
$\xi$ with tangent vector $H_{\alpha}(\xi)$ at $\xi$ and still
$x_{t}=\pi(\xi_{t}),$ then 
\begin{eqnarray*}
H_{\alpha}H_{\beta}\hat{f}(\xi) & = & H_{\alpha}\langle\xi_{\beta},\nabla f(\pi(\xi))\rangle_{g}\\
 & = & \frac{d}{dt}|_{t=0}\langle\xi_{\beta,t},\nabla f(x_{t})\rangle_{g}\\
 & = & \langle\frac{D\xi_{\beta,t}}{dt}|_{t=0},\nabla f(x)\rangle_{g}+\langle\xi_{\beta},\nabla_{\xi_{\alpha}}\nabla f(x)\rangle_{g}\\
 & = & \mbox{Hess}f(\xi_{\alpha},\xi_{\beta})(x),
\end{eqnarray*}
where we've used the fact that $\xi_{\beta,t}$ is parallel along
$x_{t}$.
\end{proof}

Since $X_{t,\xi}$ is the projection of $U_{t,\xi}$ and $U_{t,\xi}$
is the solution of the RDE (\ref{RDE on O(M)}) which is equivalent
to an It$\hat{\mbox{o}}$ type SDE from an extrinsic point of view,
by Theorem \ref{Theorem PDE on M} we can see that as a process on
$M$ the law of the $G$-Brownian motion $X_{t,\xi}$ is characterized
by the nonlinear parabolic PDE (\ref{generating PDE on M}).
\begin{example}
When $G$ is given by a functional of trace, as in Example \ref{example O(d)}
(1), (2), the generating PDE (\ref{generating PDE on M}) takes a
more explicit form in terms of the Laplace-Beltrami operator $\Delta_{M}$
on $M.$ This is due to the fact that 
\[
\Delta_{M}=\mbox{tr}(\mbox{Hess}).
\]
For instance, if $G(A)=\frac{1}{2}\mbox{tr}(A),$ then (\ref{generating PDE on M})
becomes the classical heat equation on $M$:
\[
\frac{\partial u}{\partial t}-\frac{1}{2}\Delta_{M}u=0,
\]
which governs the law of classical Brownian motion on $M$ (see \cite{hsu2002stochastic},
\cite{ikeda1989stochastic}). If $G$ is given by 
\[
G(A)=\frac{1}{2}(\mu(\mbox{tr}A)^{+}-\lambda(\mbox{tr}A)^{-}),
\]
where $0\leqslant\lambda<\mu,$ then (\ref{generating PDE on M})
becomes 
\[
\frac{\partial u}{\partial t}-\frac{1}{2}(\mu(\Delta_{M}u)^{+}-\lambda(\Delta_{M}u)^{-})=0.
\]
It is a generalization of the one-dimensional Barenblatt equation
to higher dimensions in a Riemannian geometric setting.
\end{example}

As pointed out before, as a process the $G$-Brownian motion $X_{t,\xi}$
on $M$ depends on the initial orthonormal frame $\xi$ and hence
there is not a canonical choice of a particular one. However, if we
consider the path space $W(M)=C([0,1];M)$, then for each $x\in M$,
it is possible to define a canonical sublinear expectation $\mathbb{E}_{x}$
on the space $\mathcal{H}(M)$ of functionals on $W(M)$ of the form
\[
f(x_{t_{1}},\cdots,x_{t_{n}}),
\]
where $0\leqslant t_{1}<\cdots<t_{n}\leqslant1$ and $f\in C_{Lip}(M),$
such that under $\mathbb{E}_{x}$ the law of the coordinate process
is characterized by the PDE (\ref{generating PDE on M}) with $\mathbb{E}_{x}[\varphi(x_{0})]=\varphi(x)$
for any $\varphi\in C_{Lip}(M).$

To see this, we will define $\mathbb{E}_{x}$ explicitly. We use $u_{\varphi}(t,x)$
to denote the solution of (\ref{generating PDE on M}), emphasizing
the dependence on $\varphi.$ For a functional of the form $f(x_{t}),$
we simply define 
\[
\mathbb{E}_{x}[f(x_{t})]:=u_{f}(t,x).
\]
For a functional of the form $f(x_{s},x_{t})$, $\mathbb{E}_{x}f(x_{s},x_{t})$
should be defined by $\mathbb{E}^{G}[f(X_{s,\xi},X_{t,\xi})],$ where
$X_{t,\xi}$ is a $G$-Brownian motion on $M$ with respect to an
initial orthonormal frame $\xi\in\mathcal{O}_{x}(M).$ Similar to
the proof of Theorem \ref{Theorem PDE of SDE} we know that 
\begin{eqnarray*}
\mathbb{E}^{G}[f(X_{s,\xi},X_{t,\xi})] & = & \mathbb{E}^{G}[\mathbb{E}^{G}[f(X_{s,\xi},X_{t,\xi})|\Omega_{s}]]\\
 & = & \mathbb{E}^{G}[\mathbb{E}^{G}[f(\pi(U_{s,\xi}),\pi(U_{t,\xi}))|\Omega_{s}]]\\
 & = & \mathbb{E}^{G}[\mathbb{E}^{G}[f(\pi(\eta),X_{t-s,\eta})]|_{\eta=U_{s,\xi}}].
\end{eqnarray*}
But since the law of $X_{t-s,\eta}$ does not depend on the initial
orthonormal frame $\eta,$ we obtain that 
\[
\mathbb{E}^{G}[f(\pi(\eta),X_{t-s,\eta})]|_{\eta=U_{s,\xi}}=u_{f(X_{s,\xi},\cdot)}(t-s,X_{s,\xi}).
\]
Therefore, we define 
\[
\mathbb{E}_{x}[f(x_{s},x_{t})]:=\mathbb{E}^{G}[f(X_{s,\xi},X_{t,\xi})]=u_{g}(s,x),
\]
where 
\[
g(y):=u_{f(y,\cdot)}(t-s,y),\ y\in M.
\]
Inductively, assume that 
\[
u_{f}^{(n)}(t_{1},\cdots,t_{n},x)=\mathbb{E}_{x}[f(x_{t_{1}},\cdots,x_{t_{n}})]
\]
is already defined. For a functional of the form $f(x_{t_{1}},\cdots,x_{t_{n+1}}),$
define 
\[
\mathbb{E}_{x}[f(x_{t_{1}},\cdots,x_{t_{n+1}})]:=u_{g}(t_{1},x),
\]
where 
\[
g(y):=u_{f(y,\cdot,\cdots,\cdot)}^{(n)}(t_{2}-t_{1},\cdots,t_{n+1}-t_{1},y),\ y\in M.
\]
Then $\mathbb{E}_{x}$ is the desired sublinear expectation on $\mathcal{H}(M).$
\begin{rem}
As we've pointed out before, for noncompact Riemannian manifolds,
the RDE (\ref{RDE on O(M)}) may possibly explode at some finite time
and so may the corresponding $G$-Brownian motion as well. An interesting
question is the study of explosion criterion. It might depend on the
curvature and topology of the Riemannian manifold.

On the other hand, for those $G$-functions with the same invariant
group, they may have some special features in common; while for those
with different invariant groups, their structure should be very different.
The study of classification of $G$-functions in terms of the invariant
group is interesting, and it might give us some hints on generalizing
our results to the case when $I(G)\neq O(d).$ We believe that in
some cases it is still possible to construct a proper frame bundle
with structure group $I(G)$ on which we can apply similar techniques
in this section. But in some extreme cases, for instance when $I(G)$
is a finite group as in Example \ref{Example finite group}, it seems
difficult to proceed along this direction unless we have a globally
defined frame field over the Riemannian manifold $M$, which is usually
not true. We probably need some very different methods for those extreme
cases.
\end{rem}

\section*{Acknowledgement}
The authors wish to thank Professor Shige Peng for so many valuable suggestions on the present paper.

\bibliographystyle{plain}
\bibliography{Bibliography}

\end{document}